\numberwithin{equation}{section}
\theoremstyle{plain}
\newtheorem{thm}{Theorem}[section]
\newtheorem{prop}[thm]{Proposition}
\newtheorem{rmq}[thm]{Remark}
\newtheorem{lem}[thm]{Lemma}
\newcommand{\zero}0
\newcommand{\tmpsum}{s}
\newcommand{\mix}{\text{\textnormal{mix}}}
\newcommand{\hit}{\text{\textnormal{hit}}}
\renewcommand{\d}{{\rm{d}}}
\newcommand{\ind}{\mathds{1}}
\newcommand{\reels}{\mathds{R}}
\newcommand{\cesp}[2]{\mathds{E}_{#1}\left[#2\right]}
\newcommand{\norm}[2]{\left\lVert #1\right\rVert_{#2}}
\newcommand{\R}{\mathbb{R}}
\newcommand{\N}{\mathbb{N}}
\renewcommand{\P}{\mathbb{P}}
\newcommand{\E}{\mathbb{E}}
\begin{document}

\begin{frontmatter}
\title{Induced idleness leads to deterministic heavy traffic limits
for queue-based random-access algorithms}
\runtitle{Deterministic heavy traffic limits in QB-CSMA}

\begin{aug}
 \author{\fnms{Eyal} \snm{Castiel}\corref{}\thanksref{t2}\ead[label=e1]{eyal.castiel@isae-supaero.fr}},
 \author{\fnms{Sem} \snm{Borst}\ead[label=e2]{s.c.borst@tue.nl}},
 \author{\fnms{Laurent} \snm{Miclo}\ead[label=e3]{laurent.miclo@math.univ-toulouse.fr}},
 \author{\fnms{Florian} \snm{Simatos}\ead[label=e4]{florian.simatos@isae-supaero.fr}},
 \and
 \author{\fnms{Phil} \snm{Whiting} \ead[label=e5]{philip.whiting@mq.edu.au}}

 \thankstext{t2}{Authors are listed in alphabetical order except for the first one who is the main contributor.}

 \runauthor{E.\ Castiel et al.}

 \affiliation{ISAE-SUPAERO, Universit\'e de Toulouse, CNRS, Toulouse School of Economics, Eindhoven University of Technology, Nokia Bell Labs and Macquarie University}

 \address{ISAE-SUPAERO\\10 avenue Edouard Belin\\31055 Toulouse\\ 
 \printead{e1,e4}}

 \address{Eindhoven University of Technology\\
 \printead{e2}}

 \address{Toulouse School of Economics\\
 \printead{e3}}

 \address{Macquarie University\\
 \printead{e5}}

\end{aug}

\begin{abstract}
We examine a queue-based random-access algorithm where activation
and deactivation rates are adapted as functions of queue lengths.
We establish its heavy traffic behavior on a complete interference graph,
which turns out to be nonstandard in two respects:
(1) the scaling depends on some parameter of the algorithm and is not
the $N/N^2$ scaling usually found in functional central limit theorems;
(2) the heavy traffic limit is deterministic.
We discuss how this nonstandard behavior arises from the idleness
induced by the distributed nature of the algorithm.
In order to prove our main result, we develop a new method
for obtaining a fully coupled stochastic averaging principle.
\end{abstract}

\begin{keyword}[class=MSC]
\kwd[Primary ]{60K25}
\kwd[; secondary ]{60K35}
\end{keyword}

\begin{keyword}
\kwd{CSMA algorithms}
\kwd{stochastic averaging principle}
\kwd{state space collapse}
\end{keyword}

\end{frontmatter}

\setcounter{tocdepth}{1}
\tableofcontents

\newpage

\color{black}

\section{Introduction}

In the present paper we investigate the heavy traffic behavior
of a queue-based random-access mechanism.
Specifically, we analyze the joint queue length process
in a critically loaded system where packets arrive at the various nodes
as Poisson processes and are transmitted intermittently.
When all nodes are inactive, any of them may start a packet
transmission at an exponential rate that depends on its local queue
length, i.e., the number of packets pending for transmission.
Once a node is transmitting, it prevents other nodes from activating and turns inactive at an exponential rate
which is also governed by its local queue length.

\subsection{Context and motivation}

The above model arises in the context of distributed scheduling, i.e., deciding which queues to serve without any central authority having a global knowledge of the network state,
in queueing networks with constraints on the set of queues that can be active simultaneously (called constrained queueing networks). This is a fundamental
and challenging problem with applications in a wide range of settings.
In particular, the random-access mechanism described above captures
the dynamics of queue-based versions of the Carrier-Sense Multiple-Access
Collision Avoidance (CSMA-CA) protocol as further explained below,
which is commonly used in wireless communication networks.

A breakthrough in the area of scheduling in constrained queueing
networks was achieved when Tassiulas and Ephremides introduced the
Max-Weight algorithm in the early nineties~\cite{Tass90}.
This scheme was the first to provably offer maximum stability
guarantees under fairly general conditions, and has been generalized
and refined in a huge body of follow-up work.
However, the Max-Weight algorithm is inherently centralized in nature,
and crucially relies on the solution of a potentially NP-hard global
optimization problem at each iteration, namely, finding an independent set of maximum weight which then serves as schedule. This severely limits its
implementation in large-scale networks.

Only after nearly twenty further years, Jiang
and Walrand~\cite{Jiang08} and Rajagopolan {\em et al.}~\cite{Raj09}
proposed the first truly distributed algorithms with the capability
to match the throughput optimality of the Max-Weight algorithm.
Informally stated, these algorithms aim to mimic the scheduling operations
of the Max-Weight algorithm while using only locally available information.
Specifically, the individual nodes make fairly autonomous decisions
for controlling activity periods (e.g.~packet transmissions)
and inactivity periods (e.g.~back-off intervals), subject to the
constraints on simultaneous activity.

For a more detailed description, it is convenient to assume that the
latter constraints can be represented in terms of a conflict graph,
where the edges indicate which pairs of nodes are prevented
from simultaneous activity.
Such constraints may for example arise from interference issues
preventing simultaneous transmission in wireless networks, in which case
the conflict graph is commonly referred to as interference graph.
The operations of distributed scheduling algorithms in these scenarios
may be described as follows.
Upon completion of a packet transmission, a node either starts a random
back-off period or proceeds with the transmission of the next packet,
if any, with a probability that depends on the local queue length.
When inactive, a node simply runs down its back-off clock, but freezes
it whenever any of its neigbhors in the conflict graph are active,
ensuring that a back-off period can only end when all its neighbors
are inactive.
At that point, a node either initiates a packet transmission
or proceeds with the next random back-off period with a probability
which is also a function of the local queue length.

Now observe that transmission activity in wireless networks can be
detected by `sensing' a shared channel, and that the above back-off
mechanism precludes concurrent transmissions of mutually interfering nodes,
explaining the term Carrier-Sense Multiple-Access Collision Avoidance.
Further note that the idleness and randomized deactivation may seem
inefficient from a resource utilization perspective, but play
an instrumental role in sharing the medium through `listening'
in the absence of any centralized access control mechanism. The extreme case where a node deactivates only when its queue is empty is referred to as the Random Capture algorithm~\cite{FPR10}. Although it may seem to minimize idleness, it was actually shown that it is not always throughput-optimal~\cite{Ghaderi2012}.

When we now assume the interference graph to be complete and further
suppose that the back-off periods and transmission times are all
independent and exponentially distributed, the above model reduces
to that described in the first paragraph:
when all nodes are inactive, any of them may turn active
at a queue-dependent exponential rate, and once a node is active,
it de-activates at a queue-dependent exponential rate.

The seminal results in \cite{shashin,SST11} showed that the
above-described queue-based versions of the CSMA-CA algorithm
(henceforth referred to as QB-CSMA) achieve maximum stability,
provided that the activation and deactivation probabilities are
governed by suitable functions of the local queue lengths.
To the best of our knowledge, however, little is known about the queueing
dynamics of these algorithms beyond the maximum stability properties.


\subsection{A nonstandard heavy traffic behavior}

The present paper aims at deepening the understanding of the
above-described queue-based random-access algorithms.
We prove in particular that, near criticality and for the particular
case of a complete interference graph, these scheduling mechanisms
exhibit a nonstandard behavior in two ways:
\begin{enumerate}
\item the heavy traffic limit is deterministic;
\item the scaling depends on a parameter of the algorithm and is not
the usual central limit theorem (CLT) like scaling $N/N^2$ where the time-scale $N^2$ is the square of the space scale $N$.
\end{enumerate}

In particular, the limit is not a reflected Brownian motion and is thus
unconventional in the terminology of Harrison~\cite{Harrison95:0}. The literature on unconventional heavy traffic results is quite scarce, at least compared to the important number of conventional results. Harrison and Williams~\cite{Harrison96:0} exhibited the first such example in the context of a closed queueing network, and Kruk later provided an example for an open queueing network under the Earliest Deadline First policy~\cite{Kruk11:0}. Atar and Cohen~\cite{Atar19:1} study a multiclass
single-server queue which, subject to the usual CLT scaling, converges
to a nonstandard diffusion process (namely, a Walsh Brownian motion).
Another example is Puha~\cite{Puha15:0}, who studies the Shortest Remaining Processing Time (SRPT) policy:
there the scaling is nonstandard but the limiting diffusion is
conventional, i.e., the heavy traffic limit is a reflected Brownian motion.

In the model studied in the present paper, the behavior is nonstandard in two ways:
(1) the limit is actually deterministic and governed by an ordinary
differential equation (ODE), and (2) if $N$ is the space scale, the suitable
time scale is $N^{1+a}$ with $a \in (0,1/2)$ a parameter of the algorithm.
In particular, the time scale is in-between the usual fluid
and diffusion time scales $N$ and~$N^2$, respectively.
This peculiar scaling is due to the idleness which arises
as a consequence of the distributed nature of QB-CSMA, see Section~\ref{subsub:nonstandard} and~\ref{sub:other-scalings} for an illustrative back-of-the-envelope computation.

Despite these two nonstandard features, our model does exhibit a state
space collapse property which is commonly associated with the
conventional Brownian diffusion limits for a wide range
of multi-dimensional queueing processes \cite{Reiman84,Reiman05}.
Indeed, the queue lengths at the various nodes vary according
to certain fixed proportions in the heavy traffic limit, meaning that
the joint queue length process lives in a one-dimensional space.

\subsection{Idleness in random-access settings}

As alluded to above, in QB-CSMA, nodes deactivate at a state-dependent rate in order for the system to be able to alternate between different activity states in a distributed way. In particular, nodes may deactivate even when they have work to process. This makes the system non work-conserving and induces additional idleness compared to that owing to queues being empty.

\color{black}

For classical queueing models and stochastic networks, \textcolor{black}{a large body of literature has investigated the impact of idleness on the heavy traffic behavior and performance. For instance, one of the achievements in the study of Jackson networks is the understanding of this impact on the reflection matrix in the limiting multi-dimensional reflected Brownian motion~\cite{Harrison81:1}.} However, in these ``classical'' settings, idleness occurs when queues
are empty or resources get stranded because of concurrency requirements.

In contrast, in random-access settings like ours, idleness occurs even when there are large queues, and is simply part of a distributed mechanism to share resources without explicit information exchange.
In this distributed setting, the impact of idleness on heavy traffic behavior is more subtle and model-dependent.
For instance, considering QB-CSMA in a different regime than the one studied here, a lingering effect was highlighted in~\cite{Sim14} leading to a heavy traffic scaling $\frac{1}{(1-\rho)^2}$, \textcolor{black}{with $\rho$ the load of the network,} compared to the usual $\frac{1}{1-\rho}$ due to idleness. 
In the present model, the fraction of idleness is inversely proportional to (a power of) the queue lengths, yielding a yet different impact on the heavy traffic behavior.
After the model and main results are presented, we will describe this behavior in greater detail in Section~\ref{subsub:nonstandard}, and in particular explain why the heavy traffic behavior is deterministic and the $N/N^{1+a}$ scaling emerges.

It is interesting to compare our results with those on Max-Weight. Indeed, QB-CSMA algorithms were designed with the purpose of mimicking Max-Weight in a decentralized manner, and Shah and Shin~\cite{shashin} establish the throughput-optimality of these algorithms by applying the same Lyapunov function as for Max-Weight. Thus, as far as throughput is concerned, QB-CSMA algorithms behave very similarly as Max-Weight. What we show here is that the comparison breaks down at criticality concerning delay. Indeed, Stolyar~\cite{Sto04} showed that the critical behavior of Max-Weight is ``standard'', i.e., consists in the usual CLT scaling and leads to a reflected Brownian motion. Here the behavior is completely different because of the additional idleness induced by the decentralized nature of QB-CSMA.

\subsection{Link with polling systems}

When run on a complete interference graph, only one server can be active
at a time and so QB-CSMA can be viewed as a particular polling system
with state-dependent non-zero switchover times and switching decisions.
This equivalence has in fact been exploited to use results
for polling systems with a so-called $1$-limited service discipline
and a probabilistic routing policy in analyzing CSMA algorithms
where nodes deactivate at a fixed (non-queue-based) rate,
see for instance~\cite{Cecchi16:0,Dorsman15:0}.

There is a significant body of heavy traffic results for polling
systems by now, starting with the seminal papers~\cite{Coffman95:0, Coffman98:0}.
However, the model in~\cite{Coffman95:0} did not include any switchover times,
so that the total amount of work behaves as in a work-conserving
single-server queue and in particular exhibits the standard
heavy traffic scaling behavior.
The model in~\cite{Coffman98:0} did incorporate non-zero switchover times,
but involved an exhaustive service discipline, which implies that the
fraction of idleness is basically reciprocal to the queue length,
rather than the queue length raised to a power $a \in (0, 1/2)$.
While the total amount of work is substantially larger than in
a work-conserving single-server queue due to the non-zero switchover times,
it exhibits a similar $\frac{1}{1-\rho}$ scaling behavior because
of the rapid decay of the idleness as function of the queue length.
Moreover, the exhaustive service discipline causes the work to rapidly
shift among the various queues, causing fundamentally different
dynamics than the state space collapse that we observe in our model.

Heavy traffic results for a broader class of polling systems
with so-called Bernoulli-exhaustive and Bernoulli-gated service
disciplines are established in~\cite{Mei07:0}.
However, these concern stationary distributions rather than
process-level limits, and again pertain to disciplines where the
idleness scales inversely proportional to the queue length,
yielding qualitatively similar scaling behavior as in~\cite{Coffman98:0}.

Finally, heavy traffic results for polling systems with $k$-limited
service disciplines are presented in~\cite{Boon14:0}.
In these systems the idleness essentially approaches a constant,
positive fraction as the queue lengths grow, again causing fundamentally
different scaling behavior from what we encounter in our model.

\subsection{Methodological contribution}

The seemingly simple case of a complete interference graph actually turns out to be challenging to analyze. Technically, the main difficulty lies in controlling the so-called stochastic averaging principle, or homogenization. This principle asserts that when two processes interact but evolve on different time scales, then the 'slow' process only interacts with the 'fast' process through the instantaneous equilibrium distribution of the fast process. The most difficult case is the so-called fully coupled stochastic averaging principle which arises when this instantaneous distribution depends on the state of the slow process, which is the case here.

Controlling such an approximation is in general a difficult problem, and numerous methods have been developed for that, see for instance the classical monograph of Freidlin and Wentzell~\cite{Frei98}. However, in our case we were not able to apply any standard method, in particular the ones developed by Kurtz~\cite{Kurtz92} and Luczak and Norris~\cite{Luc13}. This led us to develop a new method. It is close in spirit to that of Luczak and Norris but is more tailored to Markov processes. The stochastic averaging principle is controlled by martingale arguments and leverages properties of solutions to the Poisson equations associated with the fast generators, see Section~\ref{sub:SAP} for more details. We believe that this new approach has the potential of being applied to a wide class of problems and its more general applicability will be studied elsewhere.

\color{black}

To give a more precise idea of our technique to control the homogenization, imagine the Markov process under study is $(Q^N, \sigma^N)$ with $Q^N$ the 'slow' process and $\sigma^N$ the 'fast' one. Controlling homogenization amounts to controlling the approximation
\[ \int_0^t F \left( Q^N(s), \sigma^N(s) \right) \d s \approx \int_0^t \pi^{Q^N(s)} \left[ F \left( Q^N(s), \cdot \right) \right] \d s \]
with $\pi^q$ the stationary distribution of the fast process when the slow process is in state $q$ and $\nu[f]$ the integral of a measurable function $f$ with respect to the measure $\nu$. To do so, we first rewrite the difference
\[ \int_0^t \left( F \left( Q^N(s), \sigma^N(s) \right) - \pi^{Q^N(s)} \left[ F \left( Q^N(s), \cdot \right) \right] \right) \d s \]
in the form
\begin{multline*}
	V(Q^N(t), \sigma^N(t)) - V(Q^N(0), \sigma^N(0))\\
	- \int_0^t L^{N, \sigma^N(s)}_\textrm{s} (V(\, \cdot \, , \sigma^N(s))) (Q^N(s)) \d s + \textrm{(martingale term)}
\end{multline*}
with $L^{N, \sigma}_\textrm{s}$ the generator of the slow process when the slow process is in state~$\sigma$. Here the function $V$ that appears is linked to solutions to the Poisson equation $L^{N, q}_\textrm{f}\phi = g - \pi^q[g]$ with unknown $\phi$, and so the above expression indeed makes it possible to cast the problem of homogenization in terms of control of solutions to Poisson equations. Moreover, this control is achieved by expressing solutions $\phi$ in the form
\[ \phi(\sigma) = \int_0^\infty \left[ \E_\sigma(g(X(t))) - \E_\sigma(g(X(\infty))) \right] \d t \]
with $(X(t))$ the fast Markov process started at $\sigma$ under $\P_\sigma$, and $X(\infty)$ is stationary distribution. To the best of our knowledge, this approach for controlling homogenization, and in particular the bounds on the solutions to the Poisson equation that we establish are new.

\subsection{Organization of the paper}

We introduce our model and state our main result in Section~\ref{sec:model+main}. This section also presents a discussion of the result, in particular why we consider polynomial activation functions, a back-of-the-envelope computation to provide an intuition for the result, and also a more detailed discussion of the stochastic averaging principle. Section~\ref{sec:notation} gathers the notation used throughout the paper, and in particular the generators and their associated Poisson equations as well as important stopping times used in localization arguments. The three main steps of the proof are then presented. Sections~\ref{homo} to \ref{Smain} contain the technical arguments: Sections~\ref{homo} and~\ref{sec:Poisson} describe the arguments controlling the stochastic averaging principle, Section~\ref{Sssc} the arguments controlling the state space collapse, and Section~\ref{Smain} gathers the arguments to provide the full proof. The paper is concluded with different extensions and directions for future research in Section~\ref{sec:openings}.

\color{black}

\section{Model description and main result} \label{sec:model+main}

\subsection{Model description with fixed arrival rates}

We have a set of $n$ nodes labeled by $V = \{1, \ldots, n\}$. Each node $v \in V$ represents an $M/M/1$ queue with the FIFO service discipline and vacations, its arrival rate is denoted by $\lambda_v > 0$. We denote by $Q_v(t) \in \mathds{N} \coloneqq \{0, 1, \ldots, \}$ the length of $v$'s backlog at time $t$ and by $\sigma_v(t) \in \{0,1\}$ the activity process: the server at $v$ is active and processing pending requests at unit rate if $\sigma_v(t) = 1$, and $\sigma_v(t) = 0$ otherwise. Put differently, $\sigma_v(t)$ is the instantaneous service rate of node $v$ at time $t$. We define $\lambda \coloneqq (\lambda_v, v \in V)$, $Q(t) \coloneqq (Q_v(t), v \in V)$ and $\sigma(t) \coloneqq (\sigma_v(t), v \in V)$.

We impose that only one node can be active at a time, and so whenever convenient we will identify $\sigma$ with the active node, or put $\sigma = 0$ if no node is active (empty schedule). We will thus either consider $\sigma \in \{0,1\}^V$ when seeing $\sigma$ as the vector of instantaneous service rates, or $\sigma \in V_0$ with $V_0 = V \cup \{0\}$ when seeing $\sigma$ as the current schedule. Because a schedule is associated with a node, we will sometimes use the notation $q_\sigma$ to denote the $v$th coordinate of the vector $q \in \mathds{R}^V_+$, with $v$ the only non-zero coordinate of~$\sigma$, and in this case we will adopt the convention $q_0 = 0$. Note that with this convention, we have $\sigma_0 = 1 - \sum_{v \in V} \sigma_v$.
\\

Given the current schedule $\sigma$, the queue-length process $Q$ evolves as $n$ independent $M/M/1$ queues with service rates $\sigma$ and input rates $\lambda$. On the other hand, given the current value $Q$ of the queue-length process, $\sigma$ evolves according to the \textcolor{black}{following dynamic, which is a particular case of the Glauber dynamics for the hard-core model~\cite{Dobrushin68:0, Berg94:0}:} an active node $v$ with $\sigma_v = 1$ deactivates at rate $\Psi_-(Q_v)$ for some deactivation function $\Psi_-$, and an inactive node $v$ with $\sigma_v = 0$ activates at rate $\Psi_+(Q_v)$ for some activation function $\Psi_+$, provided no other node is active.

To be more formal, $(Q, \sigma)$ is a Markov process on $\mathds{N}^V \times \{0,1\}^V$ with infinitesimal generator $L$ that can be decomposed as the sum of two generators:
\begin{itemize}
	\item the generator $L^\sigma_{\mathrm{s}}$ of the \textit{slow} queue-length process $Q$ whose dynamic depends on $\sigma$;
	\item and the generator $L^q_{\mathrm{f}}$ of the \textit{fast} activity process $\sigma$ whose dynamic depends on $q$.
\end{itemize}
The terminology \textit{slow} and \textit{fast} will be justified in Section~\ref{sub:SAP} when discussing the stochastic averaging principle. Thus, $L$ acts on functions $f: \mathds{N}^V \times \{0,1\}^V \to \mathds{R}$ as
\[ Lf(\sigma,q) = L_{\mathrm{s}}^\sigma(f(\sigma, \cdot))(q) + L_{\mathrm{f}}^q(f(\cdot,q))(\sigma) \]
with
\begin{equation} \label{eq:slow-L}
	L_{\mathrm{s}}^\sigma(g)(q) = \sum_{v \in V} \lambda_v \left( g(q+e^v) - g(q) \right) + \sum_{v \in V} \sigma_v \ind_{q_v>0} \left( g(q-e^v) - g(q) \right)
\end{equation}
and
\begin{multline} \label{eq:fast-L}
	L_{\mathrm{f}}^q(h)(\sigma) = \sum_{v \in V} \sigma_v \Psi_-(q_v) \left( h(\sigma - e^v) - h(\sigma) \right)\\
	+ \prod_{w \in V} (1-\sigma_w) \sum_{v \in V} \Psi_+(q_v) \left( h(\sigma + e^v) - h(\sigma) \right)
\end{multline}
with $g: \mathds{N}^V \to \mathds{R}$ and $h: \{0,1\}^V \to \mathds{R}$ arbitrary functions and $e^v \in \{0,1\}^V$ with $0$'s everywhere except at the $v$th coordinate equal to $1$. \textcolor{black}{Note that a server does not deactivate immediately when its queue gets empty, which makes the indicator term $\ind_{q_v > 0}$ necessary in~\eqref{eq:slow-L}.} Since the graph associated with $L_{\mathrm{f}}^q$ is a star centered at $0$, this generator admits a reversible distribution denoted $\pi^q$. For reasons explained in Section~\ref{sub:discussion}, we consider polynomial activation and deactivation functions of the form
\[ \Psi_+(x) = \frac{(x+1)^a}{1 + (x+1)^a} \in [0,1] \ \text{ and } \ \Psi_-(x) = 1 - \Psi_+(x), \ x \in \mathds{N}, \]
with $a > 0$ a parameter of the algorithm. In this case, $\pi^q$ is given by
\[ \pi^q(\sigma) = \frac{(1+q_\sigma)^a}{\sum_{\eta \in V_0} (1+q_\eta)^a}, \ \sigma \in V_0. \]

\color{black}
Let $\rho = \sum_{v \in V} \lambda_v$. Under the above assumptions, it is not hard to establish that $(Q, \sigma)$ is positive recurrent if $\rho < 1$ and transient if $\rho > 1$. Transience for $\rho > 1$ can be proved by lower bounding $X := \sum_v Q_v$ by an $M/M/1$ queue with arrival rate $\rho$ and service rate $1$. Positive recurrence for $\rho < 1$ can be proved using the Foster--Lyapunov criterion and showing that $X$ is a Lyapunov function. Indeed, as soon as one queue is active, arrivals make $X$ increase in the mean by $\rho$ while the queue in service makes it decrease by $1$, so that overall $X$ decreases at rate $\rho - 1 < 0$.

Thus, the regime where $\sum_v \lambda_v = 1$ will be referred to as the critical case and the rest of the paper will be devoted to the study of the near-critical case where $\sum_v \lambda_v \approx 1$, which we introduce now.

\subsection{Near-critical regime and heavy traffic scaling}

Throughout the paper, we fix $V = \{1, \ldots, n\}$, $a > 0$, $\lambda^\infty \in \R^V_+$ with $\sum_v \lambda^\infty_v = 1$ and $\gamma \in \R^V$. For each $\varepsilon > 0$, we define $N = \varepsilon^{-1/a}$ and consider
\begin{equation} \label{eq:near-criticality}
	\lambda^N = \lambda^\infty - \varepsilon \gamma = \lambda^\infty - N^{-a} \gamma.
\end{equation}
The parameter $\varepsilon > 0$ represents the 'distance' between $\lambda^N$ and the boundary of the stability region. We introduce $N = \varepsilon^{-1/a}$ because it will be simpler to index the processes by $N$ rather than by $\varepsilon$, as is for instance reflected by the notation $\lambda^N$ instead of $\lambda^\varepsilon$. As will be seen shortly, $N = \varepsilon^{-1/a}$ is the 'right' order of magnitude of the queue length process (see the discussion in Section~\ref{sub:other-scalings} for more details).


Our main object of interest is the Markov process with infinitesimal generator given by~\eqref{eq:slow-L} and~\eqref{eq:fast-L} but with $\lambda^N$ instead of $\lambda$. Thus, the generator $L^\sigma_{\mathrm{s}}$ that we will consider actually depends on $N$ and is given by
\[ L_{\mathrm{s}}^\sigma(g)(q) = \sum_{v \in V} \lambda^N_v \left( g(q+e^v) - g(q) \right) + \sum_{v \in V} \sigma_v \ind_{q_v>0} \left( g(q-e^v) - g(q) \right), \]
but in order to avoid cumbersome notation we will omit this dependency in $N$. Likewise, we will denote by $L$ the generator  $L = L^q_\mathrm{f} + L^\sigma_\mathrm{s}$ introduced above but with $\lambda^N$ instead of $\lambda$, and in the sequel we will denote by $(Q, \sigma)$ the Markov process with this generator (again, omitting the dependency in~$N$ for ease of notation).

In contrast, we will keep the dependency in $N$ for the scaled processes. More precisely, we consider $(Q^N, \sigma^N)$ the Markov process obtained from $(Q, \sigma)$ by speeding up time by a factor $N^{1+a}$ and scaling the $Q$-components by $N$ in space:
\[ Q^N(t) = \frac{1}{N} Q \left( N^{a+1} t \right) \ \text{ and } \ \sigma^N(t) = \sigma \left( N^{a+1} t \right), \ t \geq 0. \]
The infinitesimal generator of $(Q^N, \sigma^N)$ will be denoted by $L^N$, see Section~\ref{subsub:generators} for an explicit formula.

\begin{rmq}
	Other scalings are possible: actually, when the arrival rates are still given by~\eqref{eq:near-criticality} and $\varepsilon$ is the distance to the boundary of the stability region, we investigate in Section~\ref{sub:other-scalings} what happens on the space scale $\varepsilon^{-1/a'}$ with $a' > 0$ not necessarily equal to $a$.
\end{rmq}

\subsection{Main result}

For $x \in \mathds{R}^V$ and $b > 0$, let in the sequel
\[ \norm{x}{b}=\left( \sum_{v \in V} \lvert x_v \rvert^b\right)^{1/b} \ \text{ and } \ s(x) = \sum_{v \in V} x_v. \]
\color{black}
As will be seen shortly, the limiting process lives in the one-dimensional vector space
\begin{align} \label{eq:I}
	I & = \left\{ x \in \mathds{R}^V_+: \lambda_w^\infty x_v^a=\lambda_v^\infty x_w^a, \ v, w \in V \right\}\\
	& = \left\{ x \in \mathds{R}^V_+: x_v = \left( \frac{\lambda_v^{\infty}}{\mu} \right)^{1/a} \tmpsum(x), \ v \in V \right\}\notag,
\end{align}
\textcolor{black}{where here and in the sequel, $\mu = \norm{\lambda^\infty}{1/a}$. Intuitively, $I$ is the space where the mean service rate at each node matches the corresponding arrival rate.} In the sequel we use $\Rightarrow$ to denote weak convergence as $N \to \infty$. The following result is the main result of the paper, which describes the behavior of the queue-length process in the \textcolor{black}{near-critical case}.
\begin{thm}\label{main}
	Assume that the three following assumptions hold:
	\begin{itemize}
	\item $a < 1/2$;
	\item \textcolor{black}{condition~\eqref{eq:near-criticality} holds, i.e., $\lambda^N = \lambda^\infty - N^{-a} \gamma$ with $\lambda^\infty$ and $\gamma$ introduced above;}
	\item $Q^N(0) \Rightarrow q^0$ for some $q^0 \in I \setminus \{0\}$.
\end{itemize}
	Then $Q^N \Rightarrow q$ uniformly on compact time-sets, where $q$ is uniquely characterized as follows: $q(t) \in I$ for every $t \geq 0$ and $\tmpsum \circ q$ is the unique solution to the ODE
	\[ \textcolor{black}{\dot x = \mu x^{-a}-\tmpsum(\gamma)} \]
	with initial condition $x(0) = \tmpsum(q^0)$ and where $\mu = \textcolor{black}{\norm{\lambda^\infty}{1/a}}$.
\end{thm}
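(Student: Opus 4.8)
The plan is to combine two ingredients that are, in spirit, standard in heavy traffic analysis but here require new tools: a \emph{stochastic averaging principle} (homogenization) that replaces the fast activity process $\sigma^N$ by its instantaneous equilibrium $\pi^{Q^N}$, and a \emph{state space collapse} showing that $Q^N$ concentrates on the one-dimensional set $I$. Write $X^N = \tmpsum \circ Q^N$. From the generator $L^\sigma_{\mathrm s}$ in~\eqref{eq:slow-L}, the scaled process $X^N$ satisfies, for each coordinate $v$,
\[
	Q^N_v(t) = Q^N_v(0) + \lambda^N_v N^a t - \frac{1}{N}\int_0^{N^{1+a}t} \ind_{Q_v(s)>0}\,\sigma_v(s)\,\d s + \text{(martingale)},
\]
so that summing over $v$, and using $\tmpsum(\lambda^N) = 1 - N^{-a}\tmpsum(\gamma)$, we get
\[
	X^N(t) = X^N(0) + N^a t\bigl(1 - N^{-a}\tmpsum(\gamma)\bigr) - N^a\!\int_0^{t} \ind_{Q^N(Ns)\ne 0}\,\d s \cdot(\text{avg.\ activity}) + \text{(mart.)}.
\]
The point is that in time $N^{1+a}t$ the queues are of order $N$, essentially never empty, so the indicator is $1$; the genuine gain is the $N^a$ prefactor multiplying the difference between the total arrival rate $1$ and the \emph{time-averaged total service rate}. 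That time average is exactly $\frac{1}{t}\int_0^t \sum_v \sigma^N_v(s)\,\d s$, and by the averaging principle it should be close to $\frac{1}{t}\int_0^t \bigl(1 - \pi^{Q^N(s)}(0)\bigr)\,\d s$. Since $\pi^{q}(0) = 1/\sum_{\eta\in V_0}(1+q_\eta)^a$, on the scale $q\sim Nx$ this is of order $N^{-a}$, namely $\pi^{Nx}(0) \approx N^{-a}/\tmpsum(x^a\text{-weights})$; the precise computation on $I$ gives $1 - \pi^{Nx}(0) \approx 1 - N^{-a}/(\mu\, s(x)^{-a})^{-1}$ — more carefully, $N^a(1 - \text{time-avg service} - N^{-a}\tmpsum(\gamma)) \to \mu\, x^{-a} - \tmpsum(\gamma)$, which is precisely the drift in the claimed ODE. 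So the heuristic is: homogenization turns $\sigma^N$ into $\pi^{Q^N}$, state space collapse pins $Q^N$ to $I$ so that $\pi^{Q^N(s)}(0)$ becomes an explicit function of $X^N(s)$ alone, and the $N^a$ magnification of the residual idleness produces the deterministic drift $\mu x^{-a} - \tmpsum(\gamma)$.

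Concretely I would proceed in the following steps. \textbf{Step 1 (localization).} Introduce stopping times keeping $Q^N$ away from $0$ and inside a compact region of $(0,\infty)^V$ (the excerpt announces such stopping times in Section~\ref{sec:notation}); on this event the indicators $\ind_{Q_v>0}$ are identically $1$ and all rates are bounded, so the martingale terms have quadratic variation $O(N^{a}\cdot N^{-2}\cdot N^{1+a}) = O(N^{2a-1})\to 0$ using $a<1/2$ — this is where the hypothesis $a<1/2$ enters, ensuring the fluctuations vanish and the limit is deterministic. \textbf{Step 2 (averaging principle).} For the test function $F(q,\sigma) = \sum_v \sigma_v$ (or, equivalently, $\ind_{\sigma=0}$), apply the homogenization estimate from Section~\ref{sub:SAP}/\ref{homo} to show
\[
	\int_0^t \Bigl( F(Q^N(s),\sigma^N(s)) - \pi^{Q^N(s)}[F(Q^N(s),\cdot)] \Bigr)\d s \longrightarrow 0
\]
in probability, uniformly on compacts; as sketched in the excerpt this is controlled by writing the difference as $V(Q^N(t),\sigma^N(t)) - V(Q^N(0),\sigma^N(0)) - \int_0^t L^{N,\sigma^N(s)}_{\mathrm s}(V(\cdot,\sigma^N(s)))(Q^N(s))\,\d s + (\text{mart.})$ with $V$ built from the solution of the Poisson equation $L^{N,q}_{\mathrm f}\phi = g - \pi^q[g]$, and bounding $\phi$ via $\phi(\sigma) = \int_0^\infty[\E_\sigma g(X(t)) - \pi^q[g]]\,\d t$. \textbf{Step 3 (state space collapse).} Show $\mathrm{dist}(Q^N(t), I) \Rightarrow 0$ uniformly on compacts (Section~\ref{Sssc}); combined with $Q^N(0)\Rightarrow q^0\in I$, this forces any subsequential limit $q$ to live in $I$, hence $q(t) = (\lambda^\infty_v/\mu)^{1/a} s(q(t))$ coordinatewise, so the limit is determined by the scalar $x = \tmpsum\circ q$. \textbf{Step 4 (identify the ODE and close the loop).} On $I$, $\pi^{Nx}[F] = 1 - (\mu\, s(x)^{-a})^{-1}\cdot(\ldots)$; substituting into the Dynkin-type decomposition for $X^N$ and multiplying the residual by $N^a$ yields, for any subsequential limit, the integral equation $x(t) = \tmpsum(q^0) + \int_0^t(\mu\, x(s)^{-a} - \tmpsum(\gamma))\,\d s$. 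Since $y\mapsto \mu y^{-a} - \tmpsum(\gamma)$ is locally Lipschitz on $(0,\infty)$, this ODE has a unique solution up to explosion or hitting $0$; a separate argument (the drift is strongly positive near $0$, of order $y^{-a}$) shows $x$ stays bounded away from $0$ on compact time-sets, validating the localization of Step 1 and giving uniqueness of the limit, hence full convergence $Q^N\Rightarrow q$.

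\textbf{Main obstacle.} The crux is Step 2, the \emph{fully coupled} averaging principle: because $\pi^q$ depends on the slow state $q$, and because we need the error to vanish after multiplication by the large factor $N^a$, we need quantitative control — not just qualitative tightness — on $\int_0^t (F - \pi^{Q^N}[F])\,\d s$, with the rate beating $N^{-a}$. This forces sharp bounds on the solution $\phi$ of the fast Poisson equation and on its discrete derivatives in $q$ (to handle the term $L^{N,\sigma}_{\mathrm s}V$, which differentiates $V$ in the slow variable), uniformly over the localized region; standard results of Kurtz or Luczak--Norris do not directly apply, which is why the paper develops the new method described in Section~\ref{sub:SAP}. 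A secondary but real difficulty is that the homogenization and the state space collapse are entangled — one cannot cleanly establish the collapse without already knowing the activity process equilibrates, and vice versa — so the arguments of Sections~\ref{homo}--\ref{Sssc} must be run together (or bootstrapped) rather than sequentially, and the localization of Step 1 must be shown a posteriori consistent with the limiting dynamics.
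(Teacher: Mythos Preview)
Your outline matches the paper's proof closely: localization via stopping times, homogenization through the Poisson-equation representation (Proposition~\ref{eqpoi}), state space collapse via the Kullback--Leibler functional $d^N$ (Proposition~\ref{prop:SSC}), and then identification of the ODE for $s\circ Q^N$ followed by removal of the localization. Two points deserve correction. First, your Step~1 misplaces the role of $a<1/2$: the martingale for $s\circ Q^N$ has quadratic variation of order $N^{a-1}$ (jumps of size $1/N$ at rate $O(N^{1+a})$), which vanishes for any $a<1$; the genuine bottleneck is exactly what you flag in your ``Main obstacle'', namely that the homogenization error of Proposition~\ref{eqpoi} is $O((\log N)^{3/2}N^{-1/2})$ and must survive multiplication by $N^a$, forcing $a<1/2$ (see Section~\ref{sub:1/2}). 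Second, the paper does not argue via subsequential limits and tightness in Step~4 but directly via Gronwall's inequality on $|s(Q^N(t\wedge T^N))-S(t)|$, which avoids compactness arguments; and contrary to your closing worry, the homogenization and the state space collapse are not entangled---Proposition~\ref{eqpoi} is proved first, independently of any collapse, and Proposition~\ref{prop:SSC} then \emph{uses} it (Lemma~\ref{eqpoissc}), so the argument is strictly sequential once localization is in place.
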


\color{black}
Except when $\tmpsum(\gamma) = 0$, there does not seem to be an explicit formula for the solution of the previous ODE. For $\tmpsum(\gamma) = 0$, the solution to the ODE $\dot x = \mu x^{-a}$ is
\[ x(t) = \left( x(0)^{a+1} + (a+1) \mu t \right)^{1/(a+1)}, \ t \geq 0, \]
and so using the fact that $q(t) \in I$, the limit $q$ in the previous statement is given in this case by
\[ q_v(t) = \left( \frac{\lambda^\infty_v}{\mu} \right)^{1/a} \left( \tmpsum(q^0)^{a+1} + (a+1) \mu t \right)^{1/(a+1)}, \ v \in V, t \geq 0.\]

\color{black}
From now on, we assume that the conditions of this theorem are enforced, i.e., we assume throughout that $a < 1/2$, that~\eqref{eq:near-criticality} holds and that $Q^N(0) \Rightarrow q^0 \in I \setminus \{0\}$.

With some extra work, but without giving much more insight on the system's behavior, the previous result could be generalized to an arbitrary initial condition $q^0 \in \mathds{R}^V_+$. If $q^0 = 0$ nothing changes in the statement of the above result, while if $q^0 \not \in I$ then the convergence holds uniformly on compact time-sets from $(0, +\infty)$ because the limiting process immediately jumps at time $0+$ to the invariant manifold $I$ even if it does not start there. The rest of this introduction is devoted to discussing this result in more details.

\subsection{Intuition and discussion} \label{sub:discussion} We discuss here in more details the context and implications of our result. We begin by justifying our interest in polynomial activation functions, then give an intuition behind the state space collapse result based on the stochastic averaging principle, and we finally discuss the nonstandard scaling that emerges from it.

\subsubsection{Polynomial activation functions} \label{subsub:csma} The literature on optimal CSMA algorithms is very rich and the interested reader is for instance referred to the thorough survey by Yun et al.~\cite{Yun12:0} for more details. In this paper we are interested in the class of QB-CSMA algorithms initially proposed by Rajagopalan, Shah and Shin~\cite{Raj09}. The main idea of these algorithms is to have activation and deactivation rates $\Psi_+$ and $\Psi_-$ being adapted as a function of queue lengths. Rajagopalan, Shah and Shin study in particular the case where $\Psi_+ + \Psi_- = 1$ with
\[ \Psi_+(q) = \frac{f(q_v)}{1 + f(q_v)} \]
for some function $f$. The main result of~\cite{ghad10, Raj09, shashin} is that this algorithm is throughput-optimal for any interference graph provided $f$ increases slowly enough, namely sub-polynomially\footnote{Actually, these algorithms also use some information on the current maximum queue length, whether the exact maximum or an estimation thereof.}. However, results of~\cite{Ghaderi2012} suggest that if $f$ grows polynomially, then it is only throughput-optimal for some interference graphs, depending on the relation between the graph topology and the exponent of the polynomial growth of $f$.

The rationale for seeking fast-increasing functions $f$ is that a folklore result has it that delay is improved with faster increasing functions $f$, an intuition which is backed up by results in~\cite{Bouman11:0}. Polynomial activation and deactivation functions should therefore achieve the optimal trade-off between throughput and delay for this class of algorithms, which is the reason why we focus on this case here. Note that in the case of a complete interference graph as considered here, the algorithm is throughput-optimal for any functions $\Psi_+$ and $\Psi_-$ satisfying $\Psi_+(q) \to 1$ and $\Psi_-(q) \to 0$ as $q \to \infty$, so that we need not worry about stability issues for such polynomial activation and deactivation functions, as may be the case in a more general setting.

\subsubsection{State space collapse from the stochastic averaging principle} \label{sub:SSC}

The reason behind the state space collapse property is simple to understand based on the \textit{stochastic averaging principle}. Put simply, when queue lengths are large, say of the order of $N$, then the typical time scale of $\sigma$ is much faster than the one of $Q$ which makes $Q$ interact with $\sigma$ only through the stationary distribution $\pi^q$ of its corresponding instantaneous Glauber dynamics. The latter depends on $Q$, which gives rise to a so-called fully coupled stochastic averaging principle, which essentially amounts to the approximation
\begin{equation} \label{eq:SAP}
	\int_0^t F \left( Q^N(s), \sigma^N(s) \right) \d s \approx \int_0^t \pi^{N Q^N(s)} \left[ F \left( Q^N(s), \cdot \right) \right] \d s
\end{equation}
with $\nu[f] = \int f \d \nu$ for any positive measure $\nu$ and integrable function $f$.

Recall that in our case, the stationary probability $\pi^q(v)$ of node $v \in V$ being active is given by
\[ \pi^q(v) = \frac{(1+q_v)^a}{1 + \sum_{w \in V} (1+q_w)^a}. \]
According to the stochastic averaging principle, this should represent the instantaneous service rate of node $v$ which should thus behave as a subcritical $M/M/1$ queue when $\pi^q(v) < \lambda^\infty_v$ and as a supercritical $M/M/1$ queue when $\pi^q(v) > \lambda^\infty_v$. As $\sum_v \lambda^\infty_v = 1$ and $\sum_{v \in V} \pi^q(v) = 1 - \pi^q(0) \approx 1$ for large $q$, we see that the only way for the network to behave smoothly is that each average service rate $\pi^q(v)$ matches its incoming service rate, i.e., $\pi^q(v) \approx \lambda^\infty_v$. When $q$ is large, this forces $q$ to live in the invariant manifold~$I$ because $\pi^q(v) \approx q^a_v$ up to a multiplicative constant. Thus, the state space collapse phenomenon can be directly understood as a consequence of the stochastic averaging principle together with the criticality assumption.

\subsubsection{The stochastic averaging principle} \label{sub:SAP}

In the context of stochastic networks, the \textit{stochastic averaging principle} was put forth for loss networks in the famous work by Hunt and Kurtz~\cite{Hunt94} but, as mentioned in Feuillet and Robert~\cite{Feu12}, ``outside this class of networks, there are, up to now, few examples of stochastic networks for which a fully coupled stochastic averaging principle occurs''. Establishing a fully coupled stochastic averaging principle is in general a challenging task and, in the queueing literature, many works actually restrict their study to the so-called homogenized process, assuming that timescale separation indeed occurs.

Rigorous proofs of stochastic averaging principles were established for polling systems times~\cite{Coffman95:0, Coffman98:0, Jennings10:0}, for models of distributed hash tables~\cite{Feu12} and for the $X$ model~\cite{Perry13:0}. Luczak and Norris~\cite{Luc13} also developed a new method which they applied to a variant of the supermarket model.

Most of these works, in particular~\cite{Feu12, Hunt94, Perry13:0}, rely on the machinery developed by Kurtz~\cite{Kurtz92}. It relies on martingale arguments and identifies the asymptotic occupation measure of the fast process as the invariant measure of a limiting averaged generator. In our case this identification step is not clear because some rates go to $0$ in the limit. In particular, the limiting scheduling process is degenerate: it starts at $0$ and then jumps to one of the possible states $v \in V$ where it is absorbed. In the absence of uniqueness, it is known that any accumulation point must be a linear combination of the different stationary measures but no general method seem to exist to characterize this combination.

The method of Luczak and Norris~\cite{Luc13} does not yield this problem. However, we have not been able to apply their results to our case. It seems plausible to modify their arguments in order to obtain Theorem~\ref{main} but only for $a < \frac{1}{3}$. The method that we develop here is close in spirit to theirs but is more tailored to Markov processes. The approximation~\eqref{eq:SAP} is controlled by martingale arguments and leverages properties of solutions to the Poisson equations (in $\phi$) $L^q_{\mathrm{f}} \phi = g - \pi^q[g]$ associated with the fast generators $L^q_{\mathrm{f}}$ and to functions $g: V_0 \to \R$.

\subsubsection{Nonstandard behavior} \label{subsub:nonstandard}

Taking the state space collapse and the stochastic averaging principle for granted, back-of-the-envelope computation can give insight into the nonstandard critical behavior observed for our system. As mentioned above, a consequence of the stochastic averaging and the criticality assumption is that $\pi^q(v) \approx \lambda^\infty_v$. However, taking into account the idle time induced by the necessary scheduling of the empty state which, when queue lengths are of the order of $N$, is of the order $\pi^q(0) \approx N^{-a}$, gives rise to the second-order approximation where $\lambda_v^\infty - \pi^q(v)$ is of the order of $N^{-a}$ (see Section~\ref{sub:other-scalings} for a more detailed heuristic). This suggests that node $v \in V$ behaves as a near-critical $M/M/1$ queue with arrival rate $\lambda^N_v = \lambda^\infty_v - N^{-a} \gamma_v$ and service rate $\lambda_v^\infty - N^{-a}$.

What is the right time scale for such a queue? A first-order asymptotic expansion of its generator can give a clue, namely, if time is sped up by $N^b$ then the action on its generator on a function $f$ is given by
\begin{multline*}
	N^b \left(\lambda^\infty_v -N^{-a} \gamma_v \right) \left( f \left( q + \frac{1}{N} \right) - f(q) \right)\\
	+ N^b \left( \lambda^\infty_v - N^{-a} \right) \left( f \left( q - \frac{1}{N} \right) - f(q) \right).
\end{multline*}
The leading term is $N^{b-a-1} f'(q)$ which suggests to take $b = a+1$, as turns out to be indeed the case. Moreover, we see that only first-order terms are dominant, which explains why the limiting process is deterministic and no diffusion term arises. This discussion also clearly highlights the key impact of idleness on the system performance at criticality, as without idleness, i.e., if we had $\lambda^\infty - \pi^q$ of the order of $1/N$, then we would see the usual $N/N^2$ scaling and a diffusion process in the limit.

\section{Notation and main steps of the proof} \label{sec:notation}

We introduce in this section further notation, and then explain the main steps of the proof of Theorem~\ref{main}.

\subsection{Notation} We first gather notation used throughout the paper.

\subsubsection{General notation}

For $b > 0$ and $x \in \R^V$ recall the notation $\norm{x}{b} = (\lvert x_1 \rvert^b + \cdots + \lvert x_n \rvert^b)^{1/b}$ and $s(x) = x_1 + \cdots + x_n$. We write $\lVert \cdot \rVert_\infty$ for the supremum norm, thus $\lVert f \rVert_\infty = \sup \lvert f \rvert$ for $f: \reels^V \to \reels$ and $\lVert q \rVert_\infty = \max_v \lvert q_v \rvert$ for $q \in \R^V$. If $U \subset \reels^V$ and $f: \reels^V \to \reels$ we also define $\lVert f \rVert_{U, \infty} = \sup_{x \in U} \lvert f(x) \rvert$.

Whenever $f$ is smooth enough, we denote by $\partial_v$ its partial derivative along $q_v$ and $\partial^2_{v,w}$ its second-order derivative along $q_v$ and $q_w$, i.e.,
\[ \partial_v f = \frac{\partial f}{\partial q_v} \ \text{ and } \ \partial^2_{v,w} f = \frac{\partial^2 f}{\partial q_v \partial q_w}. \]
We will also consider the discrete differences $\Delta^N_{\pm, v}f$ for a function $f: \R^V \times V_0 \to \R$, given by
\[ \Delta^N_{\pm, v}f(q, \sigma) = f \left( q \pm \frac{e^v}{N}, \sigma \right) - f(q, \sigma). \]
Thus, $N \Delta^N_{\pm, v}f \to \pm \partial_v f$ as $N \to \infty$ for $f$ differentiable.

\subsubsection{Generators} \label{subsub:generators}

Let $E^N = \frac{1}{N} \N^V$ be the state space of the scaled process $Q^N$. We define $L^N$, $L^{N,q}_{\mathrm{f}}$ and $L^{N, \sigma}_{\mathrm{s}}$ for $q \in E^N$ and $\sigma \in V_0$ the scaled generators \textcolor{black}{ with arrival rates $\lambda^N$}: for $f : E^N \times V_0 \to \R$, $g : E^N \to \R$, $h : V_0 \to \R$, $q \in E^N$ and $\sigma \in V_0$,
\[ L^Nf(q, \sigma) = N^{a+1} L f^{(N)}(Nq, \sigma), \ L^{N, \sigma}_{\mathrm{s}} g (q) = N^{a+1} L^\sigma_{\mathrm{s}} g^{(N)}(Nq) \]
and
\[ L^{N,q}_{\mathrm{f}} h (\sigma) = N^{a+1} L^{Nq}_{\mathrm{f}} h(\sigma) \]
with $f^{(N)}(q, \sigma) = f(q/N, \sigma)$ and $g^{(N)}(q) = g(q/N)$ for $q \in \N^V$. Note that the stationary distribution of $L^{N, q}_{\mathrm{f}}$ is $\pi^{Nq}$. Let $\Gamma^N$ be the \textit{carr\'e du champ} operator associated with $L^N$: for any $f: E^N \times V_0 \to \mathbb{R}$, .  We have
\[ \Gamma^N (f) = L^N (f^2) - 2 f L^N (f) \]
and elementary computation shows that for $(q, \sigma) \in E^N \times V_0$ we have
\begin{align} \label{eq:Gamma}
\Gamma^N & f(q,\sigma) = N^{a+1} \sum_{v \in V} \lambda^{N}_v \left( f \left( q+\frac{e^v}{N},\sigma \right)-f(q,\sigma)\right)^2\\
& \hspace{-5mm} + N^{a+1} \sum_{v \in V} \sigma_v\ind_{q_v>0} \left(f \left( q-\frac{e^v}{N},\sigma \right)-f(q,\sigma)\right)^2 \notag\\
& \hspace{-5mm} + N^{a+1} \sum_{v \in V} \left( f(q,0)-f(q, e^v) \right)^2 \left( \dfrac{\sigma_v}{1+(Nq_v+1)^a}+\dfrac{\sigma_{\zero}}{1+(Nq_v+1)^{-a}} \right). \notag
\end{align} 
From standard Markov process theory, for any function $f$ the process
\[ M^N_f(t) = f(Q^N (t),\sigma^N (t)) - f(Q^N(0),\sigma^N(0)) - \int_0^tL^N f(Q^N (s),\sigma^N (s))\d s \]
is a local martingale with increasing process
\[ \left\langle M^N_f \right\rangle(t)=\int_0^t\Gamma^N f(Q^N (s),\sigma^N (s))\d s. \]

For $N \geq 1$ we consider the homogenized generator $L^N_{\mathrm{h}}$ acting on functions $f: E^N \to \reels$ as
\begin{multline} \label{eq:Lh}
	L^N_{\mathrm{h}}f(q) = N^{a+1} \sum_{v \in V} \lambda^{N}_v \left( f\left( q + \frac{e^v}{N} \right) - f(q) \right)\\
	+ N^{a+1} \sum_{v \in V} \pi^{Nq}(v) \ind_{q_v>0} \left( f\left( q - \frac{e^v}{N} \right) - f(q) \right).
\end{multline}
This is the same generator as the generator $L^\sigma_{\mathrm{s}}$ of the (scaled) slow process given by~\eqref{eq:slow-L}, but where the instantaneous service rate $\sigma_v$ of node $v$ is replaced by its average value $\pi^q(v)$.

\subsubsection{Poisson equation}

For any function $g: V_0 \to \reels$ and any $q \in E^N$ we denote by $\phi^N_g(q, \, \cdot)$ the unique solution to the Poisson equation associated with the scaled fast generator $L^{N,q}_{\mathrm{f}}$ and the function $g$, i.e., $\phi^N_g(q, \, \cdot)$ is the unique solution with $\pi^{Nq}[\phi^N_g(q, \, \cdot)] = 0$ to the equation with unknown $\phi$
\begin{equation} \label{eq:Poisson}
	L^{N,q}_{\mathrm{f}} \phi = g - \pi^q[g].
\end{equation}
In the sequel, we will be particularly interested in $\phi^N_v(q, \, \cdot \,)$ solution to~\eqref{eq:Poisson} with $g(\sigma) = \sigma_v$ for $v \in V_0$, which therefore satisfies for any $q \in E^N$ and any $\sigma \in V_0$
\begin{equation} \label{eq:Poisson-i}
	L^{N,q}_{\mathrm{f}} \left( \phi^N_v(q, \, \cdot \,) \right)(\sigma) = \sigma_v - \pi^{Nq}(v).
\end{equation}

\subsubsection{Initial state, limiting ODE}
Recall that we fix throughout an initial state $q^0 \in I \setminus \{0\}$ and we assume that $Q^N(0) \to q^0$. Moreover, we consider $S = (S(t), t \geq 0)$ the solution to the ODE
\[ \dot x = \mu x^{-a}-\tmpsum(\gamma) \]
with initial condition $S(0) = \tmpsum(q^0)$.
We also consider $q = (q(t), t \geq 0)$ the $\mathbb{R}^V$-valued function with $\tmpsum \circ q = S$ and $q(t) \in I$ for all $t \geq 0$, i.e.,
\[ q_v(t) = \left( \frac{\lambda_v^\infty}{\mu} \right)^{1/a} S(t),\, t \geq 0,\, v \in V. \]
Note that for any choice of $\gamma\in \reels^V$, $S(t)$ is bounded away from zero, i.e., $\inf_{t \geq 0} S(t) > 0$. \textcolor{black}{If $\tmpsum(\gamma)=0$, $S$ has an explicit expression:}
\[ S(t) = \left( \mu (a+1) t + \tmpsum(q^0)^{a+1} \right)^{1/(a+1)}, \ t \geq 0. \]

\subsubsection{Localization, constants}

Most of the proof of Theorem~\ref{main} is carried out for a localized process $Q^N(t \wedge T^N)$ with $T^N$ the first time that $Q^N$ significantly departs from $q$. More precisely, in the rest of the paper we fix some finite time horizon $T > 0$ and we consider the following two constants:
\[ M = \min \left( 2 \sup_{[0,T]} S, \frac{2}{\inf_{[0,T]} S}, \frac{1}{2} \right) \ \text{ and } \ m = \frac{1}{M \mu^{1/a}} \min_v \left(\lambda^\infty_v\right)^{1/a}. \]
Here and in the sequel, we will treat as constants all numerical parameters that only depend on $a$, $n$, $T$, $\lambda^\infty$, $q^0$ and the sequence $(\lambda^N)$ as these are fixed throughout the entire paper. Moreover, we will use the letter $C$ to denote positive and finite constants, that only depend on $a$, $n$, $T$, $\lambda^\infty$, $q^0$ and $(\lambda^N)$, and whose precise value is irrelevant and that may change from line to line. Note in particular that the constants $C$ do not depend on $N$, so that if $0 \leq u_N \leq C v_N$ with $v_N \to 0$, then also $u_N \to 0$.

We then define
\[ T^N \coloneqq \inf \left \lbrace t>0 : \norm{Q^N(t)-q(t)}{1}>\dfrac{m}{2} \right \rbrace, \]
the set $U \subset \reels^V_+$
\[ U \coloneqq \left \lbrace q \in \reels_+^V: \frac{1}{M} < \tmpsum(q) < M \ \text{ and } \ \min_v q_v > m \right \rbrace, \]
its intersection $U^N$ with $E^N$
\[ U^N = U \cap E^N, \]
and the exit time of $Q^N$ from $U$ (or $U^N$):
\[ \tau^N \coloneqq \inf \left \lbrace t \geq 0 : Q^N(t) \notin U \right \rbrace. \]
Because jumps of $Q^N$ are of size $1/N$, at time $T^N$ we have
\[ \left \lVert Q^N(T^N) - q(T^N) \right \rVert \leq \frac{m}{2} + \frac{1}{N}. \]
The constants $m$ and $M$ have been chosen such that the following result holds. The proof is computational and omitted.

\begin{lem}\label{lemma:m-M}
	We have $T^N \leq \tau^N$. In particular, $Q^N(t \wedge T^N) \in U^N$ for all $t \geq 0$.
\end{lem}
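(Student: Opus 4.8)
The plan is to argue pathwise: once a trajectory of $(Q^N,\sigma^N)$ is fixed the statement is purely deterministic, and it should follow from elementary inequalities only, with no probabilistic input — which is presumably why the paper records the proof as computational and omits it. I would first reduce the second assertion to the first. Since $Q^N(s)\in E^N$ for every $s$ and $U^N=U\cap E^N$, and since $Q^N(t)\in U$ for every $t<\tau^N$ by the very definition of $\tau^N$, the inequality $T^N\le\tau^N$ immediately gives $Q^N(t)\in U^N$ for all $t<T^N$. The only instant left to inspect is $t=T^N$ itself, which I would treat together with the proof of $T^N\le\tau^N$; once $Q^N(T^N)\in U^N$ is also known, $Q^N(t\wedge T^N)\in U^N$ for all $t\ge0$ follows.

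The crux is the geometric fact that the constants $m$ and $M$ are calibrated precisely so that the closed $\norm{\cdot}{1}$-tube of radius $m/2$ around the limiting curve sits inside $U$: any $x\in\R^V_+$ with $\norm{x-q(s)}{1}\le m/2$ for some $s\in[0,T]$ lies in $U$, and, with a little more room built into the constants, the fatter tube of radius $m/2+1/N$ still lies in $U^N$ for $N$ large. Granting this, the argument closes quickly. For $t<T^N$ one has $\norm{Q^N(s)-q(s)}{1}\le m/2$ for every $s<T^N$ by definition of $T^N$, hence $Q^N(s)\in U$, hence $\tau^N\ge t$; letting $t\uparrow T^N$ yields $T^N\le\tau^N$. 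At the instant $T^N$, a single jump of $Q^N$ moves one coordinate by $1/N$, so $\norm{Q^N(T^N)-q(T^N)}{1}\le m/2+1/N$, and the fatter-tube inclusion gives $Q^N(T^N)\in U^N$.

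To establish the tube inclusion I would check the three defining constraints of $U$ one by one, each reducing to a triangle inequality together with the definitions of $m$, $M$ and $I$. Write $\delta=\norm{x-q(s)}{1}$. For the sum: since $q(s)\in I$ we have $\tmpsum(q(s))=S(s)$, so $|\tmpsum(x)-S(s)|\le\delta$, and the bounds $\tfrac1M<\tmpsum(x)<M$ follow from $S(s)\in[\inf_{[0,T]}S,\sup_{[0,T]}S]$ once one checks that the constants satisfy $\sup_{[0,T]}S+\delta<M$ and $\inf_{[0,T]}S-\delta>1/M$ (for which one uses that $m$ is small compared to $\inf_{[0,T]}S$, itself a consequence of the definition of $m$ and of $\min_v\lambda^\infty_v\le\mu$). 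For the coordinatewise constraint: for each $v$, $x_v\ge q_v(s)-\delta=(\lambda^\infty_v/\mu)^{1/a}S(s)-\delta\ge(\lambda^\infty_v/\mu)^{1/a}\inf_{[0,T]}S-\delta$, and the definition of $m$ is exactly what makes $(\lambda^\infty_v/\mu)^{1/a}\inf_{[0,T]}S\ge 2m$, whence $x_v\ge 3m/2>m$. Redoing all these bounds with $\delta=m/2+1/N$ and using that the margins built into $m$ and $M$ are of order one absorbs the extra $1/N$ as soon as $N$ is large.

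The main — essentially the only — obstacle is bookkeeping rather than any real difficulty. One has to be careful that every quantity entering the estimates is controlled by $S$ on the compact interval $[0,T]$ alone, consistently with the appearance of $\sup_{[0,T]}S$ and $\inf_{[0,T]}S$ in the definition of $M$, and one has to pay attention to the random instant $T^N$, at which $Q^N$ has just left the $m/2$-ball by a jump of size $1/N$; the slack deliberately built into the constants $m$ and $M$ is exactly what renders this harmless.
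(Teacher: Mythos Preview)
Your proposal is correct and is exactly the direct computational check the paper alludes to when it omits the proof as ``computational.'' One small remark: the displayed definition of $M$ in the paper with $\min$ is evidently a typo for $\max$ (with $\min$ one would have $M\le 1/2<2\le 1/M$ and $U=\emptyset$), and your tube argument correctly proceeds as if it read $\max$; also, since your bounds on $S$ are taken over $[0,T]$, what you actually establish is $Q^N(t\wedge T^N)\in U^N$ for $t\in[0,T]$, which is all that is ever used in the paper.
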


\subsubsection{Distance to $I$}

In order to control the distance to the invariant manifold $I$ given by~\eqref{eq:I}, i.e., to control the state space collapse property, we will use the Kullback-Leibler divergence between $\lambda$ and $(\pi^q(v), v \in V)$ (note that the latter is not a probability measure). More precisely, for $q \in \R^V_+$ and $N \geq 1$ let
\[ d^N(q) = \sum_{v \in V} \lambda^\infty_v \log \left( \frac{\lambda^\infty_v}{\pi^{Nq}(v)} \right). \]
When $N \to \infty$ and $q \in U$ we have $\pi^{Nq}(v) \to \pi^q_\infty(v)$ where
\[ \pi^q_\infty(v) = \frac{q_v^a}{\norm{q}a^a}, \ v \in V. \]
We thus introduce
\[ d^\infty(q) = \sum_{v \in V} \lambda^\infty_v \log \left( \frac{\lambda^\infty_v}{\pi^q_\infty(v)} \right). \]
which therefore satisfies $d^N(q) \to d^\infty(q)$ as $N \to \infty$. The convergence is actually uniform in $q \in U$, as the next lemma states (the proof is omitted).

\begin{lem}\label{lem:d-1}
	As $N \to \infty$ we have
	\[ \sup_{q \in U^N} \left \lvert d^N(q) - d^\infty(q) \right \rvert \to 0. \]
\end{lem}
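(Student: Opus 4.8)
The plan is to reduce the uniform convergence to a pointwise estimate that is controlled uniformly on $U^N$ by exploiting the explicit formulas for $\pi^{Nq}$ and $\pi^q_\infty$. First I would write out the difference $d^N(q) - d^\infty(q)$ explicitly: since $\lambda^\infty$ and $\gamma$ are fixed and $\sum_v \lambda^\infty_v = 1$, we have
\[
	d^N(q) - d^\infty(q) = \sum_{v \in V} \lambda^\infty_v \log\!\left( \frac{\pi^q_\infty(v)}{\pi^{Nq}(v)} \right)
	= \log\!\left( \frac{\norm{q}{a}^a}{\sum_{w \in V_0} N^{-a}(1 + N q_w)^a} \right) + \sum_{v \in V} \lambda^\infty_v \log\!\left( \frac{N^{-a}(1 + N q_v)^a}{q_v^a} \right),
\]
where I have inserted the defining formulas $\pi^{Nq}(v) = (1+Nq_v)^a / \sum_{\eta \in V_0}(1+Nq_\eta)^a$ and $\pi^q_\infty(v) = q_v^a / \norm{q}{a}^a$, and multiplied numerator and denominator of the first factor by $N^{-a}$. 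So everything comes down to estimating $N^{-a}(1 + N q_w)^a - q_w^a$ uniformly for $q \in U^N$.

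The key uniform bound is the following: for $q \in U$ we have $m < q_v$ and $q_v < M$ (since $q_v \le \tmpsum(q) < M$), hence $N q_v \ge N m \to \infty$. Writing $N^{-a}(1 + Nq_v)^a = q_v^a (1 + 1/(Nq_v))^a$, the mean value theorem (or the inequality $|(1+x)^a - 1| \le C_a x$ for $x \in [0,1]$, valid once $N m \ge 1$) gives
\[
	\left| N^{-a}(1 + N q_v)^a - q_v^a \right| = q_v^a \left| \left(1 + \tfrac{1}{Nq_v}\right)^a - 1 \right| \le q_v^a \cdot \frac{C}{N q_v} \le \frac{C M^{a}}{N m} = \frac{C}{N},
\]
uniformly in $q \in U^N$, where $C$ absorbs $a$, $m$, $M$. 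The same computation handles the $w = 0$ term in the denominator sum, for which the contribution is simply $N^{-a}(1+0)^a = N^{-a} \to 0$. Since also $q_v^a \ge m^a$ is bounded away from $0$ on $U^N$, the ratios inside the logarithms above are all of the form $1 + O(1/N)$ uniformly, and using $|\log(1+u)| \le 2|u|$ for $|u| \le 1/2$ together with $\sum_v \lambda^\infty_v = 1$ yields $\sup_{q \in U^N} |d^N(q) - d^\infty(q)| \le C/N \to 0$.

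I do not expect a genuine obstacle here; the lemma is essentially bookkeeping. The only point requiring a modicum of care is making sure every estimate is uniform in $q$, which is exactly what the definition of $U$ buys us: the constraints $\min_v q_v > m$ and $\tmpsum(q) < M$ give two-sided bounds $m < q_v < M$ on each coordinate, so that $Nq_v$ is bounded below by $Nm$ (forcing the perturbation $1/(Nq_v)$ to be small once $N \ge 1/m$) and $q_v^a$ is bounded above and below by positive constants. For small $N$ (namely $N < 1/m$) there is nothing to prove since the supremum is over a finite set and the claim is only about the limit $N \to \infty$. This is why the proof is "omitted" in the paper — it is a direct consequence of the explicit product-form expressions and the uniform two-sided control of the coordinates on $U$.
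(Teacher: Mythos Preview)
Your argument is correct and is exactly the routine computation the paper omits: rewrite $N^{-a}(1+Nq_v)^a=(q_v+1/N)^a$ and exploit the uniform two-sided bounds $m<q_v<M$ furnished by $U$. Two cosmetic slips worth noting: the displayed decomposition has its global sign flipped (what you wrote equals $d^\infty(q)-d^N(q)$), and the final rate is $O(N^{-a})$ rather than $O(N^{-1})$ because of the $\eta=0$ contribution $N^{-a}$ in the normalizing sum --- neither affects the conclusion.
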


Note that $d^\infty(q) = 0$ if and only if $q \in I$, so $d^\infty$ can indeed be seen as a distance to $I$. \color{black} For $x \in I$ we have by definition $x_v = (\lambda^\infty / \mu)^{1/a} s(x)$. The distance $d^\infty$ to $I$ will actually also control the difference between $x_v$ and $(\lambda^\infty / \mu)^{1/a} s(x)$, and the next lemma will prove useful in the sequel.

\begin{lem} \label{lem:diff-I}
	For $x \in U$ we have
	\[ \left \lvert x_v - \left( \frac{\lambda^\infty_v}{\mu} \right)^{1/a} s(x) \right \rvert \leq C \left[ d^\infty(x) \right]^{1/2}, \ v \in V. \]
\end{lem}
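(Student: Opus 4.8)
The plan is to exploit the fact that $d^\infty$ is (up to the factor $s(x)$ and constants) the Kullback--Leibler divergence between the probability vector $(\lambda^\infty_v, v \in V)$ and the probability vector $(\pi^x_\infty(v), v \in V) = (x_v^a/\|x\|_a^a, v \in V)$, and then to invoke Pinsker's inequality to pass from this divergence to an $\ell^1$-control of $\lambda^\infty_v - \pi^x_\infty(v)$, which in turn controls the desired difference. More precisely, first I would note that $\sum_v \lambda^\infty_v = 1$ and $\sum_v \pi^x_\infty(v) = 1$, so that
\[ d^\infty(x) = \sum_{v \in V} \lambda^\infty_v \log\left( \frac{\lambda^\infty_v}{\pi^x_\infty(v)} \right) = \mathrm{KL}\big( (\lambda^\infty_v)_v \,\big\|\, (\pi^x_\infty(v))_v \big) \geq 0, \]
and Pinsker's inequality gives $\sum_v |\lambda^\infty_v - \pi^x_\infty(v)| \leq C [d^\infty(x)]^{1/2}$ (indeed $\mathrm{KL} \geq \tfrac12 \|\cdot\|_1^2$).

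Next I would translate this $\ell^1$-bound on $\lambda^\infty_v - \pi^x_\infty(v) = \lambda^\infty_v - x_v^a/\|x\|_a^a$ into a bound on $x_v - (\lambda^\infty_v/\mu)^{1/a} s(x)$. The natural way is to work with $\|x\|_a^a$ as a substitute for $\mu \cdot(\text{something})$: from $x_v^a = \|x\|_a^a (\lambda^\infty_v - r_v)$ with $\sum_v|r_v| \leq C[d^\infty(x)]^{1/2}$, one gets $x_v = \|x\|_a (\lambda^\infty_v - r_v)^{1/a}$. Since $x \in U$ keeps all coordinates in a fixed compact subset of $(0,\infty)^V$ — we have $m < x_v$ and $s(x) < M$, hence $x_v, \|x\|_a$ and all $\lambda^\infty_v$ are bounded above and below by positive constants depending only on the fixed parameters — the map $y \mapsto y^{1/a}$ is Lipschitz on the relevant range, so $x_v = \|x\|_a \big[(\lambda^\infty_v)^{1/a} - O(r_v)\big]$, i.e. $x_v = \|x\|_a (\lambda^\infty_v)^{1/a} + \|x\|_a\, O(|r_v|)$. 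Summing over $v$ and using $\sum_v(\lambda^\infty_v)^{1/a} = \mu$ yields $s(x) = \|x\|_a \mu + \|x\|_a\, O\big(\sum_v|r_v|\big)$, hence $\|x\|_a = s(x)/\mu + O([d^\infty(x)]^{1/2})$. Substituting back,
\[ x_v = \Big( \tfrac{s(x)}{\mu} + O([d^\infty(x)]^{1/2}) \Big)(\lambda^\infty_v)^{1/a} + O([d^\infty(x)]^{1/2}) = \Big(\tfrac{\lambda^\infty_v}{\mu}\Big)^{1/a} s(x) + O([d^\infty(x)]^{1/2}), \]
which is the claim, all the implicit constants being controlled uniformly on $U$ because of the compactness just mentioned.

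The main obstacle — though a mild one at the level of a lemma whose proof the authors omit — is bookkeeping the uniformity of the constants: one must make sure that every Lipschitz constant for $y\mapsto y^{1/a}$, every lower bound used to divide by $\|x\|_a$ or by $\pi^x_\infty(v)$, and the Pinsker constant, depend only on $a$, $n$, $\lambda^\infty$ and on the constants $m, M$ defining $U$ (which are themselves among the fixed parameters), and not on $x$ or $N$. A secondary subtlety is that $d^\infty(x)$ could a priori be large, in which case the inequality is vacuous once the constant $C$ is large enough — since $x \in U$ forces $|x_v - (\lambda^\infty_v/\mu)^{1/a}s(x)| \leq M + (\mu^{-1/a})M \leq C$ trivially; so the bound only needs to be proved for $d^\infty(x)$ small, which is exactly the regime where the linearization above is valid. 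Putting these two remarks together closes the argument.
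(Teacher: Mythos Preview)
Your approach is correct and essentially identical to the paper's own proof: both apply Pinsker's inequality to bound $|\lambda^\infty_v - \pi^x_\infty(v)|$ by $C[d^\infty(x)]^{1/2}$, use the Lipschitz property of $y\mapsto y^{1/a}$ on the compact range forced by $x\in U$ to pass to $|x_v-(\lambda^\infty_v)^{1/a}\|x\|_a|$, sum to relate $\|x\|_a$ and $s(x)$, and finish by the triangle inequality. One small slip to fix: $\sum_v(\lambda^\infty_v)^{1/a}=\mu^{1/a}$, not $\mu$ (recall $\mu=\|\lambda^\infty\|_{1/a}$), so in your summing step you should get $\|x\|_a=s(x)/\mu^{1/a}+O([d^\infty(x)]^{1/2})$; with this correction the substitution back indeed gives $(\lambda^\infty_v)^{1/a}\cdot s(x)/\mu^{1/a}=(\lambda^\infty_v/\mu)^{1/a}s(x)$ as required.
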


\begin{proof}
	Because $y \in [m,M] \to y^{1/a}$ is Lipschitz, for $x \in [m,M]^V$ and $v \in V$ we have
	\[ \left \lvert x_v - \left( \lambda^\infty_v \right)^{1/a} \norm{x}a \right \rvert \leq C \left \lvert x^a_v - \lambda^\infty_v \norm{x}a^a \right \rvert \leq C \left \lvert \pi^x_\infty(v) - \lambda^\infty_v \right \rvert \]
	and so Pinsker's inequality gives
	\[ \left \lvert x_v - \left( \lambda^\infty_v \right)^{1/a} \norm{x}a \right \rvert \leq C \left[ d^\infty(x) \right]^{1/2}. \]
	Thus, since $s(x) = \sum_v x_v$ and $\mu^{1/a} = \sum_v (\lambda^\infty_v)^{1/a}$ we also have
	\[ \left \lvert s(x) - \mu^{1/a} \norm{x}a \right \rvert \leq \sum_{v \in V} \left \lvert x_v - \left( \lambda^\infty_v \right)^{1/a} \norm{x}a \right \rvert \leq C \left[ d^\infty(x) \right]^{1/2}. \]
	Finally, since
	\begin{multline*}
		\left \lvert x_v - \left( \frac{\lambda^\infty_v}{\mu} \right)^{1/a} s(x) \right \rvert \leq \left \lvert x_v - \left( \lambda^\infty_v \right)^{1/a} \norm{x}a \right \rvert\\
		+ \left \lvert \left( \lambda^\infty_v \right)^{1/a} \norm{x}a - \left( \frac{\lambda^\infty_v}{\mu} \right)^{1/a} s(x) \right \rvert,
	\end{multline*}
	we obtain the result.
\end{proof}

\color{black}

\subsection{Main steps}

The proof of Theorem~\ref{main} has three main steps which are proved in Sections~\ref{homo}--\ref{Smain}.

\subsubsection{First step: homogenization}

The first main step of the proof is the following averaging result: we give the main idea of its proof below, and defer the full proof to Sections~\ref{homo} and~\ref{sec:Poisson}. Recall that $C$ denotes a numerical constant allowed to depend on $a$, $n$, $T$, $\lambda^\infty$, $(\lambda^N)$ and $q^0$.

\begin{prop} \label{eqpoi}
	If $f: U \to \reels$ is continuously differentiable, then for any $v \in V$ we have
	\begin{multline*}
		\E \left[ \sup_{0 \leq t \leq T \wedge T^N} \left \lvert \int_0^t \left( \sigma^N_v(s)-\pi^{NQ^N(s)}(v) \right) f \left( Q^N(s) \right) \d s \right \rvert \right]\\
		\leq C \norm{f}{\infty, U}\dfrac{(\log N)^{3/2}}{N^{1/2}} + C \max_v \norm{\partial_v f}{\infty, U} \dfrac{(\log N)^{3/2}}{N^{1-a}}.
	\end{multline*}
\end{prop}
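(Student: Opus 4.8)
The plan is to recast the integral by means of the solution $\phi^N_v$ to the Poisson equation~\eqref{eq:Poisson-i}, and then to control the remainder through a martingale argument; we shall not track the powers of $\log N$ precisely, indicating only at the end how they combine into the exponent $3/2$. Fix $v \in V$ and a continuously differentiable $f : U \to \reels$, extended to a $C^1$ function on a neighbourhood of $\overline{U}$ (harmless, since all processes below are stopped at $T^N$ and $Q^N(\cdot \wedge T^N) \in U^N$ by Lemma~\ref{lemma:m-M}). Put $V(q, \sigma) = \phi^N_v(q, \sigma) f(q)$ for $q \in E^N$ and $\sigma \in V_0$. Since $L^{N,q}_{\mathrm{f}}$ acts on the $\sigma$-variable only, \eqref{eq:Poisson-i} gives $L^{N,q}_{\mathrm{f}}\big( V(q, \cdot) \big)(\sigma) = f(q) \big( \sigma_v - \pi^{Nq}(v) \big)$, hence $L^N V(q, \sigma) = L^{N,\sigma}_{\mathrm{s}}\big( V(\cdot, \sigma) \big)(q) + f(q) \big( \sigma_v - \pi^{Nq}(v) \big)$. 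Substituting this into the defining local martingale $M^N_V$ and rearranging yields, for every $t \geq 0$,
\begin{multline*}
	\int_0^t \big( \sigma^N_v(s) - \pi^{NQ^N(s)}(v) \big) f(Q^N(s)) \, \d s = V(Q^N(t), \sigma^N(t))\\
	- V(Q^N(0), \sigma^N(0)) - M^N_V(t) - \int_0^t L^{N, \sigma^N(s)}_{\mathrm{s}}\big( V(\cdot, \sigma^N(s)) \big)(Q^N(s)) \, \d s.
\end{multline*}
On $\{ s \leq T^N \}$ the process $Q^N(s)$ stays in the finite set $U^N$, so $M^N_V(\cdot \wedge T^N)$ is a genuine bounded martingale, and it suffices to bound the three terms on the right uniformly over $t \leq T \wedge T^N$.

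The two boundary terms are controlled by the uniform estimate $\sup_{q \in U^N} \norm{\phi^N_v(q, \cdot)}{\infty} \leq C N^{-1}$ from Section~\ref{sec:Poisson} (up to a power of $\log N$), which contributes a term of order $\norm{f}{\infty, U} N^{-1}$, absorbed into the first term of the claim. For the drift term we expand $L^{N, \sigma}_{\mathrm{s}}\big( V(\cdot, \sigma) \big)(q)$ through~\eqref{eq:slow-L} and the discrete Leibniz rule
\[ \Delta^N_{\pm, w}(\phi^N_v f)(q, \sigma) = \big( \Delta^N_{\pm, w}\phi^N_v \big)(q, \sigma)\, f\Big( q \pm \tfrac{e^w}{N} \Big) + \phi^N_v(q, \sigma)\, \big( \Delta^N_{\pm, w}f \big)(q), \]
and combine $\norm{\phi^N_v}{\infty} \leq C N^{-1}$, the discrete-derivative bound $\sup_{q \in U^N} \lvert \Delta^N_{\pm, w}\phi^N_v(q, \sigma) \rvert \leq C N^{-2}$ from Section~\ref{sec:Poisson} (again up to a power of $\log N$), and the elementary $\lvert \Delta^N_{\pm, w}f(q) \rvert \leq \norm{\partial_w f}{\infty, U} N^{-1}$. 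Since the rates in~\eqref{eq:slow-L} carry the factor $N^{a+1}$, this gives $\lvert L^{N, \sigma}_{\mathrm{s}}(V(\cdot, \sigma))(q) \rvert \leq C N^{a-1} \big( \norm{f}{\infty, U} + \max_w \norm{\partial_w f}{\infty, U} \big)$ on $U^N$; integrating over $[0, T \wedge T^N]$ and bounding $N^{a-1} \leq N^{-1/2}$ in the $\norm{f}{\infty, U}$ part (legitimate since $a < 1/2$) reproduces both terms of the claim.

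The core of the proof is the martingale term. By the Burkholder--Davis--Gundy inequality and Jensen's inequality, $\E\big[ \sup_{t \leq T \wedge T^N} \lvert M^N_V(t) \rvert \big] \leq C \big( \E\big[ \langle M^N_V \rangle(T \wedge T^N) \big] \big)^{1/2}$, so one needs $\E\big[ \int_0^{T \wedge T^N} \Gamma^N V(Q^N(s), \sigma^N(s)) \, \d s \big]$. Reading off~\eqref{eq:Gamma} and using the Poisson bounds, one gets on $U^N$, up to a power of $\log N$,
\[ \Gamma^N V(q, \sigma) \leq C \norm{f}{\infty, U}^2 \big( N^{a-1} \sigma_0 + N^{-1} \big), \]
the crucial feature being that the large factor $N^{a-1}$ --- arising because at $\sigma = 0$ the activation rates are of order $N^{a+1}$ while the squared $\sigma$-increments of $\phi^N_v$ are of order $N^{-2}$ --- multiplies the indicator $\sigma_0 = \ind_{\sigma = 0}$, whereas at $\sigma \neq 0$ only the deactivation rate, of order $N$, is relevant and contributes merely $N^{-1}$. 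Hence
\[ \E\big[ \langle M^N_V \rangle(T \wedge T^N) \big] \leq C \norm{f}{\infty, U}^2 \Big( N^{a-1} \, \E\Big[ \int_0^{T \wedge T^N} \sigma^N_0(s) \, \d s \Big] + N^{-1} \Big) \]
(up to a power of $\log N$), so everything reduces to bounding the expected idle time. Since $\pi^{Nq}(0) \leq C N^{-a}$ on $U$ and
\[ \E\Big[ \int_0^{T \wedge T^N} \sigma^N_0(s) \, \d s \Big] \leq C N^{-a} + \E\Big[ \sup_{t \leq T \wedge T^N} \Big\lvert \int_0^t \big( \sigma^N_0(s) - \pi^{NQ^N(s)}(0) \big) \, \d s \Big\rvert \Big], \]
it remains to bound the last quantity, which is the ``$v = 0$, $f \equiv 1$'' analogue of the proposition; we establish it by rerunning the decomposition above with $V = \phi^N_0$, observing that now $\sup_{U^N} \norm{\phi^N_0(q, \cdot)}{\infty}$ and $\sup_{U^N} \lvert \Delta^N_{\pm, w}\phi^N_0 \rvert$ are only of order $N^{-(a+1)}$ and $N^{-(a+2)}$ respectively (Section~\ref{sec:Poisson}), and that in the quadratic-variation step the trivial $\int_0^{T \wedge T^N} \sigma^N_0 \leq T$ already suffices. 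This yields $\E\big[ \int_0^{T \wedge T^N} \sigma^N_0(s) \, \d s \big] \leq C N^{-a}$ up to a power of $\log N$, whence $\E\big[ \langle M^N_V \rangle(T \wedge T^N) \big] \leq C \norm{f}{\infty, U}^2 N^{-1}$ and $\E\big[ \sup_{t \leq T \wedge T^N} \lvert M^N_V(t) \rvert \big] \leq C \norm{f}{\infty, U} N^{-1/2}$, again up to a power of $\log N$. Summing the three bounds and tracking the logarithmic factors through the estimates of Section~\ref{sec:Poisson} --- which accumulate to $(\log N)^{3}$ inside the quadratic variation and to $(\log N)^{3/2}$ in the drift term --- gives exactly the stated inequality.

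The main obstacle is precisely this bottleneck at the idle configuration: with only the trivial idle-time bound the martingale term would be of order $N^{(a-1)/2} \norm{f}{\infty, U}$, which is strictly worse than the target $N^{-1/2} \norm{f}{\infty, U}$ whenever $a > 0$, so obtaining the sharp rate genuinely forces the bootstrap above (settle $\sigma = 0$ first with crude inputs, then feed the resulting idle-time estimate into the general $v \in V$ case). The other delicate ingredient, and the source of all the $\log N$ factors, is the family of quantitative bounds on $\phi^N_v$ and on its discrete $q$-differences used above; these are proved in Section~\ref{sec:Poisson} from the representation $\phi^N_v(q, \sigma) = \int_0^\infty \big( \P_\sigma(X_t = v) - \pi^{Nq}(v) \big) \, \d t$, where $X = (X_t)$ is the fast chain with generator $L^{N,q}_{\mathrm{f}}$ started at $\sigma$, and the discrete differentiation of this representation in the slow variable $q$, uniformly over $q \in U^N$, is the technically heaviest step.
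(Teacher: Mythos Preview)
Your argument is correct and follows the same architecture as the paper's: the Poisson-equation decomposition~\eqref{eq:poisson}, a bound on the boundary and drift terms via $\Omega_N$ and $B_N$, and a martingale bound via $\Gamma^N$ which reduces to an estimate on the expected idle time $\E\big[\int_0^{T\wedge T^N}\sigma^N_0\big]$, itself handled by a second pass through the decomposition with $V=\phi^N_0$.

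There is, however, an unnecessary detour in your bootstrap. You bound
\[
\E\Big[\int_0^{T\wedge T^N}\sigma^N_0\Big]
\le CN^{-a}+\E\Big[\sup_{t\le T\wedge T^N}\Big\lvert\int_0^t\big(\sigma^N_0-\pi^{NQ^N}(0)\big)\,\d s\Big\rvert\Big],
\]
introducing a supremum that forces you to control $M^N_{\phi^N_0}$, and for this you invoke sharpened estimates $\lVert\phi^N_0(q,\cdot)\rVert_\infty\le CN^{-(a+1)}$ and $\lvert\Delta^N_{\pm,w}\phi^N_0\rvert\le CN^{-(a+2)}$. These estimates are in fact true (on the star graph the Poisson equation gives explicitly $\phi^N_0(q,v)-\phi^N_0(q,0)=N^{-(a+1)}(\pi^{Nq}(0)+\pi^{Nq}(v))$, so $\lVert\phi^N_0\rVert_\infty\le 2N^{-(a+1)}$ with no logarithm), but they are \emph{not} established in Section~\ref{sec:Poisson}, which only proves the uniform bounds $\Omega_N\le C(\log N)^{3/2}/N$ and $B_N\le C(\log N)^3/N^2$. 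The paper avoids all this by simply dropping the supremum: since only the \emph{mean} idle time is needed, one evaluates the decomposition at $t=T\wedge T^N$ and takes expectations, whereupon optional stopping kills $\E\big[M^N_{\phi^N_0}(T\wedge T^N)\big]$ and only the boundary ($\le C\Omega_N$) and drift ($\le CN^{a+1}B_N$) survive; the uniform bounds then give $\E\big[\int\sigma^N_0\big]\le C(N^{-a}+\Omega_N+N^{a+1}B_N)\le CN^{-a}$ for $a<1/2$ directly. Your route works, but you should either drop the supremum or supply the improved $\phi^N_0$ bounds yourself rather than citing Section~\ref{sec:Poisson}.

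Two minor points. First, your displayed bound $\Gamma^N V\le C\lVert f\rVert_{\infty,U}^2\big(N^{a-1}\sigma_0+N^{-1}\big)$ omits the slow-variable contribution $N^{a+1}\big(\Delta^N_{\pm,w}V\big)^2$, which carries a $\lVert\partial_w f\rVert_{\infty,U}^2$ term of order $N^{a-3}$; this is harmless (after the square root it is $o(N^{a-1})$ and absorbed into the second term of the claim), but it should be mentioned. Second, the $\lVert f\rVert_{\infty,U}$ part of the drift actually carries $(\log N)^3$ via $B_N$, not $(\log N)^{3/2}$; it is still absorbed into $(\log N)^{3/2}N^{-1/2}$ because $N^{a-1/2}(\log N)^{3/2}\to 0$ for $a<1/2$, but your final sentence slightly misstates where the logarithms sit.
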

The proof of this result has two steps: first, provide a bound in terms of solutions to the Poisson equation~\eqref{eq:Poisson} and then controlling these solutions. These two steps are performed in Sections~\ref{homo} and~\ref{sec:Poisson}, respectively \color{black} and, as far as we know, the bounds that we derive there are new\color{black}. To see how the Poisson equation arises, let us proceed with the following preliminary computation. We get from~\eqref{eq:Poisson-i}
\[ \sigma^N_v(s) - \pi^{NQ^N(s)}(v) = L^{N, Q^N(s)}_{\mathrm{f}} \left( \phi^N_v(Q^N(s),\, \cdot \,) \right) \left( \sigma^N(s) \right). \]
Since $f$ does not depend on $\sigma$, this makes it possible to rewrite
\begin{multline*}
	\int_0^t \left( \sigma^N_v(s)-\pi^{NQ^N(s)}(v) \right) f \left( Q^N(s) \right)\d s\\
	= \int_0^t L^{N,Q^N(s)}_{\mathrm{f}} \left( V^N_v(Q^N(s),\, \cdot \,) \right) \left( \sigma^N(s) \right) \d s
\end{multline*}
with $V^N_v(q, \sigma) = \phi^N_v(q, \sigma) f(q)$. Making use of the martingale decomposition, we finally rewrite this as
\begin{multline} \label{eq:poisson}
	\int_0^t \left( \sigma^N_v(s)-\pi^{NQ^N(s)}(v) \right) f \left( Q^N(s) \right)\d s\\
	= V^N_v \left (Q^N(t), \sigma^N(t) \right) - V^N_v \left (Q^N(0), \sigma^N(0) \right)\\
	- \int_0^t L^{N, \sigma^N(s)}_{\mathrm{s}} \left( V^N_v(\, \cdot \,, \sigma^N(s)) \right) \left( Q^N(s) \right) \d s - M^N_{V^N_v}(t).
\end{multline}
This expression will be the basis for the proof of Proposition~\ref{eqpoi}.

\subsubsection{Second step: state space collapse}

Using the averaging result of Proposition~\ref{eqpoi}, the next step is to prove the following state space collapse result.

\begin{prop} \label{prop:SSC}
	As $N \to \infty$ we have
	\[ \E \left[ \sup_{0 \leq t \leq T \wedge T^N} d^\infty \left( Q^N(t) \right) \right] \to 0. \]
\end{prop}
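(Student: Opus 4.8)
The plan is to use $d^\infty(Q^N(t))$ as a Lyapunov-type functional and show that, up to the first time $T^N$ the process leaves the tube around $q$, its supremum is small in expectation. The natural starting point is the martingale decomposition applied to the function $q \mapsto d^N(q)$ (which is smooth on $U$, since $\pi^{Nq}(v)$ is bounded away from $0$ there by Lemma~\ref{lemma:m-M}): writing
\[
d^N(Q^N(t)) = d^N(Q^N(0)) + \int_0^t L^N d^N(Q^N(s),\sigma^N(s))\,\d s + M^N_{d^N}(t),
\]
one splits the drift term $L^N d^N = L^{N,\sigma^N(s)}_{\mathrm{s}}(d^N)(Q^N(s))$ (the fast generator annihilates $d^N$ since $d^N$ does not depend on $\sigma$). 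First I would replace $L^{N,\sigma}_{\mathrm{s}}$ by the homogenized generator $L^N_{\mathrm{h}}$ of~\eqref{eq:Lh}: the difference involves exactly the terms $\sigma^N_v(s) - \pi^{NQ^N(s)}(v)$ multiplied by the discrete differences of $d^N$ along $q_v$, which are $O(1/N)$ times a bounded ($C^1$ on $U$) function; hence Proposition~\ref{eqpoi} (applied coordinatewise with $f = $ the relevant difference quotient, after checking these are uniformly $C^1$ on $U$) controls $\E[\sup_{t\le T\wedge T^N}|\int_0^t (L^{N,\sigma^N(s)}_{\mathrm{s}} - L^N_{\mathrm h})(d^N)\,\d s|]$ by a term vanishing like $(\log N)^{3/2} N^{a-1}$.

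The core computation is then to show that the homogenized drift $L^N_{\mathrm h} d^N$ is essentially non-positive on $U^N$, i.e.\ $L^N_{\mathrm h}d^\infty(q) \le C N^{1+a}\cdot(\text{something small}) - (\text{positive multiple of }d^\infty(q))$ or at least $L^N_{\mathrm h} d^N(q) \le o(1)$ uniformly, plus a genuinely dissipative contribution. Heuristically $d^\infty$ measures the Kullback–Leibler discrepancy between $\lambda^\infty$ and $\pi^q_\infty$, and the homogenized dynamics push $\pi^q_\infty(v)$ toward $\lambda^\infty_v$ because node $v$ receives work at rate $\lambda^N_v \approx \lambda^\infty_v$ and is served at rate $\pi^{Nq}(v)$; when these mismatch, $q_v$ drifts so as to reduce the mismatch, which is exactly the statement that $d^\infty$ decreases along the averaged flow. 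Concretely one Taylor-expands: $L^N_{\mathrm h}d^N(q) = \sum_v \big(\lambda^N_v \partial_v d^N(q) - \pi^{Nq}(v)\ind_{q_v>0}\partial_v d^N(q)\big) + O(N^{a-1}\|\partial^2 d^N\|_{U,\infty})$, and the first-order term equals $\sum_v (\lambda^N_v - \pi^{Nq}(v))\partial_v d^N(q)$. Using $\partial_v d^N(q) = -\sum_w \lambda^\infty_w \partial_v \log \pi^{Nq}(w)$ and the explicit form of $\pi^{Nq}$, one checks this first-order term is $\le -c\, d^\infty(q) + C N^{-a} + o(1)$ on $U^N$ (the $N^{-a}$ coming from the $\pi^q(0)\approx N^{-a}$ idleness correction and from $\lambda^N_v - \lambda^\infty_v = N^{-a}\gamma_v$), using Lemma~\ref{lem:d-1} to pass between $d^N$ and $d^\infty$ and Pinsker as in Lemma~\ref{lem:diff-I} to lower-bound the quadratic-form part by $d^\infty$.

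Putting the pieces together via a Gronwall/maximal-inequality argument: one gets
\[
\E\Big[\sup_{t\le T\wedge T^N} d^\infty(Q^N(t))\Big] \le \E[d^N(Q^N(0))] + o(1) + \E\Big[\sup_{t\le T\wedge T^N}|M^N_{d^N}(t)|\Big] - c\,\E\Big[\int_0^{T\wedge T^N} d^\infty(Q^N(s))\,\d s\Big] + o(1),
\]
where $d^N(Q^N(0)) \to d^\infty(q^0) = 0$ since $q^0\in I$ and $Q^N(0)\to q^0$; the martingale term is handled by Doob's $L^2$ inequality together with the bound on $\langle M^N_{d^N}\rangle = \int_0^\cdot \Gamma^N d^N\,\d s$, which from~\eqref{eq:Gamma} is $O(N^{a-1})$ on $U^N$ since each squared difference is $O(1/N^2)$ and the jump rates are $O(N^{1+a})$, giving $\E[\sup|M^N_{d^N}|] = O(N^{(a-1)/2})$. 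Dropping the non-positive integral term (or using it to absorb remainders) yields the claim. The main obstacle I expect is the second step: verifying that the first-order homogenized drift of $d^N$ is dissipative up to an $O(N^{-a})$ error, uniformly on $U^N$ — this is where the specific polynomial form of $\Psi_\pm$, the criticality $\sum_v\lambda^\infty_v=1$, and the convexity of the KL divergence all have to be combined carefully, and where one must be scrupulous that the "constants" $c, C$ do not secretly depend on $N$ through the localization. A secondary subtlety is the uniform $C^1$ (indeed $C^2$) control of $d^N$ and of the difference quotients fed into Proposition~\ref{eqpoi}, which requires the lower bound $\min_v q_v > m$ built into $U$.
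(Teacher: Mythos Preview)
Your approach matches the paper's (Section~\ref{Sssc}): write the semimartingale decomposition of $d^N\circ Q^N$, replace $L^{N,\sigma}_{\mathrm s}$ by the homogenized generator $L^N_{\mathrm h}$ and control the difference via Proposition~\ref{eqpoi}, bound $L^N_{\mathrm h}d^N$ on $U^N$ by a second-order Taylor expansion using the explicit formula $\partial_v d^N(q)=-a(\lambda^\infty_v-\pi^{Nq}(v))/(q_v+1/N)$, and handle $M^N_{d^N}$ by Doob and~\eqref{eq:Gamma}.

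Two slips to correct. First, your displayed Taylor expansion of $L^N_{\mathrm h}d^N$ is missing a factor $N^a$ in front of the first-order sum; with it, writing $\delta^N_v=\lambda^\infty_v-\pi^{Nq}(v)$, the first-order part is $-aN^a\sum_v \delta^N_v(q)^2/(q_v+1/N)+O(\lVert\delta^N\rVert_2)$, and maximizing this quadratic in $\lVert\delta^N\rVert_2$ yields directly $L^N_{\mathrm h}d^N(q)\le CN^{-a}+CN^{a-1}$ on $U^N$ (this is the paper's Lemma~\ref{prop2}). Since $d^N(Q^N(0))\to 0$ because $q^0\in I$, this upper bound already suffices and the dissipativity/Gronwall layer you sketch is unnecessary --- you can simply drop the negative term, as you yourself note at the end. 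Second, your rate for the homogenization error is off: $(L^N-L^N_{\mathrm h})d^N$ carries a prefactor $N^{a+1}$ in front of $(\sigma_v-\pi^{Nq}(v))\Delta^N_{-,v}d^N$, so after applying Proposition~\ref{eqpoi} with $\lVert\Delta^N_{-,v}d^N\rVert_{\infty,U},\ \lVert\partial_w\Delta^N_{-,v}d^N\rVert_{\infty,U}\le C/N$ the dominant contribution is $C(\log N)^{3/2}N^{a-1/2}$, not $N^{a-1}$; this is exactly where the hypothesis $a<1/2$ is used.
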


The proof proceeds by controlling the action of the homogenized generator $L^N$ on $d^N$ and then use this result to control $d^\infty \circ Q^N$ thanks to the averaging result of Proposition~\ref{eqpoi}.

\subsubsection{Third step: full proof}

The third step of the proof consists in showing that $Q^N(\, \cdot \wedge T^N) \Rightarrow q$. The proof proceeds in two steps: first we establish the convergence of the one-dimensional total queue length process $\tmpsum \circ Q^N(\, \cdot \wedge T^N) \Rightarrow \tmpsum \circ q = S$ by using Gronwall's lemma. Together with the state space collapse property of Proposition~\ref{prop:SSC}, this gives the convergence of the entire $n$-dimensional process $Q^N(\, \cdot \wedge T^N)$ stopped at time $T^N$.

We finally conclude the proof: because the limiting process $q$ does not exit the set $U$ by time $T$, we prove that with high probability $Q^N$ also stays in $U$ by time $T$: this implies in particular that $\P(T^N \geq T) \to 1$ which makes it possible to transfer the convergence result from the stopped process $Q^N(\, \cdot \wedge T^N)$ to the unstopped one $Q^N$.

\section{Control of homogenization in terms of solutions to the Poisson equation} \label{homo}

This section provides a first step toward the proof of Proposition~\ref{eqpoi}. We first derive a bound in terms of the following constants:
\[ \Omega_N\coloneqq \sup_{q\in U^N,\, \norm{g}{\infty}\leq 1} \norm{\phi^N_g(q,\, \cdot \,)}{\infty}, \]
\[ B_N\coloneqq\sup_{q\in U^N,\, \norm{g}{\infty}\leq 1}\max_{i\in V, \sigma \in V_0}\left\lvert\Delta^N_{\pm, i}\phi^N_g(q,\sigma)\right\rvert \]
and
\[ \Theta_N = N^{a+1} B_N + N^{1/2} \Omega_N + N^{(a+1)/2} \Omega_N^{3/2} + N^{a+1} \Omega_N B_N^{1/2} \]

\begin{lem}\label{lem:Poisson-1}
	For any $v \in V$ we have
	\begin{multline*}
		\E \left[ \sup_{0 \leq t \leq T \wedge T^N} \left\lvert \int_0^t \left( \sigma^N_v(s)-\pi^{NQ^N(s)}(v) \right) f \left( Q^N(s) \right)\d s \right\rvert \right]\\
		\leq C \lVert f \rVert_{\infty, U} \Theta_N + C \max_w \lVert \partial_w f \rVert_{\infty, U} \left( N^{(a+1)/2} B_N + N^a \Omega_N \right).
	\end{multline*}
\end{lem}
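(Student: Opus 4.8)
The starting point is the identity~\eqref{eq:poisson}, which writes the integral in question as a boundary term $V^N_v(Q^N(t),\sigma^N(t))-V^N_v(Q^N(0),\sigma^N(0))$, a drift term $-\int_0^t L^{N,\sigma^N(s)}_{\mathrm{s}}(V^N_v(\cdot,\sigma^N(s)))(Q^N(s))\,\d s$, and a martingale term $-M^N_{V^N_v}(t)$, with $V^N_v(q,\sigma)=\phi^N_v(q,\sigma)f(q)$. The plan is to bound $\sup_{0\le t\le T\wedge T^N}$ of each of the three, using only the crude sup‑bounds $\lvert\phi^N_g\rvert\le\Omega_N$ and $\lvert\Delta^N_{\pm,i}\phi^N_g\rvert\le B_N$ on $U^N$ together with the $C^1$‑regularity of $f$. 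A preliminary localization is needed: since the jumps of $Q^N$ have size $1/N$ and, by Lemma~\ref{lemma:m-M}, the stopped trajectory $Q^N(\cdot\wedge T^N)$ stays in $U^N$ -- a set built with a fixed amount of slack around $q$ -- the trajectory together with all of its $1/N$‑neighbours $Q^N(s)\pm e^w/N$ remains, for $N$ large, in a fixed compact subset of the interior of $U$; there $f$ and $\partial_w f$ are controlled by their sups over $U$, $\phi^N_v$ and its discrete differences by $\Omega_N$ and $B_N$, and $M^N_{V^N_v}(\cdot\wedge T^N)$ is a genuine (bounded, for fixed $N$) martingale. The boundary term is then immediate: since $g(\sigma)=\sigma_v$ has $\norm{g}{\infty}\le1$, $\sup_{q\in U^N,\,\sigma}\lvert V^N_v(q,\sigma)\rvert\le\Omega_N\norm{f}{\infty,U}$, so its supremum is at most $2\Omega_N\norm{f}{\infty,U}$, which is at most $C\norm{f}{\infty,U}\Theta_N$.

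For the drift term it is enough to bound $\lvert L^{N,\sigma}_{\mathrm{s}}(V^N_v(\cdot,\sigma))(q)\rvert$ along the trajectory and multiply by $T$. In scaled form~\eqref{eq:slow-L} shows that $L^{N,\sigma}_{\mathrm{s}}(V^N_v(\cdot,\sigma))$ is $N^{a+1}$ times a sum of at most $2n$ discrete differences $\Delta^N_{\pm,w}(\phi^N_v(\cdot,\sigma)f)$, each with a bounded weight ($\lambda^N_w\le1$ or $\sigma_w\le1$). The product rule
\[ \Delta^N_{\pm,w}\bigl(\phi^N_v(\cdot,\sigma)f\bigr)(q)=\phi^N_v(q,\sigma)\,\Delta^N_{\pm,w}f(q)+f(q\pm e^w/N)\,\Delta^N_{\pm,w}\phi^N_v(q,\sigma), \]
combined with $\lvert\Delta^N_{\pm,w}f(q)\rvert\le\norm{\partial_w f}{\infty,U}/N$ (mean value theorem), $\lvert\phi^N_v\rvert\le\Omega_N$ and $\lvert\Delta^N_{\pm,w}\phi^N_v\rvert\le B_N$, gives $\lvert\Delta^N_{\pm,w}(\phi^N_v(\cdot,\sigma)f)(q)\rvert\le\Omega_N\norm{\partial_w f}{\infty,U}/N+B_N\norm{f}{\infty,U}$, and hence
\[ \bigl\lvert L^{N,\sigma}_{\mathrm{s}}(V^N_v(\cdot,\sigma))(q)\bigr\rvert\le C\Bigl(N^a\Omega_N\max_w\norm{\partial_w f}{\infty,U}+N^{a+1}B_N\norm{f}{\infty,U}\Bigr), \]
the first term entering the $\max_w\norm{\partial_w f}{\infty,U}$‑part of the claimed bound and the second entering $\Theta_N$.

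The martingale term is the main obstacle. By Doob's $L^2$‑inequality it suffices to bound $\E[\langle M^N_{V^N_v}\rangle(T\wedge T^N)]^{1/2}=\E[\int_0^{T\wedge T^N}\Gamma^N(V^N_v)(Q^N(s),\sigma^N(s))\,\d s]^{1/2}$, i.e.\ to control the carr\'e du champ~\eqref{eq:Gamma} along the trajectory. Its first two (``slow'') sums are treated exactly as the drift, squared, yielding the uniform bound $C(N^{a-1}\Omega_N^2\max_w\norm{\partial_w f}{\infty,U}^2+N^{a+1}B_N^2\norm{f}{\infty,U}^2)$, whose square root is dominated by the claimed expression. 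The delicate sum is the third (``fast'') one, $N^{a+1}\sum_w(V^N_v(q,0)-V^N_v(q,e^w))^2\bigl(\tfrac{\sigma_w}{1+(Nq_w+1)^a}+\tfrac{\sigma_0}{1+(Nq_w+1)^{-a}}\bigr)$: bounding $\lvert V^N_v(q,0)-V^N_v(q,e^w)\rvert\le2\Omega_N\norm{f}{\infty,U}$, it is at most $CN^{a+1}\Omega_N^2\norm{f}{\infty,U}^2$ times the sum of these coefficients, which on $U^N$ is $\le CN^{-a}$ when the schedule is a singleton $e^u$ (there $\sigma_0=0$ and $(Nq_w+1)^a\ge(Nm)^a$), but only $\le C$ when $\sigma=0$. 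To absorb the empty‑schedule contribution I would use the occupation‑time bound
\[ \E\Bigl[\int_0^{T\wedge T^N}\ind_{\sigma^N(s)=0}\,\d s\Bigr]\le CN^{-a}, \]
obtained by applying the martingale decomposition to the bounded, $q$‑independent function $\sigma\mapsto\sum_w\sigma_w$: by~\eqref{eq:fast-L} its generator equals $N^{a+1}\sum_w\Psi_+(Nq_w)$ at $\sigma=0$, with $\sum_w\Psi_+(Nq_w)$ bounded below by a positive constant uniformly on $U^N$, and $-N^{a+1}\Psi_-(Nq_u)$ with $\Psi_-(Nq_u)\le m^{-a}N^{-a}$ at $\sigma=e^u$, so that, taking expectations at $T\wedge T^N$ (where the martingale vanishes in mean) and using $\lvert\sum_w\sigma_w\rvert\le1$, $cN^{a+1}\E[\int_0^{T\wedge T^N}\ind_{\sigma^N(s)=0}\,\d s]\le2+m^{-a}NT$. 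Feeding this in, the fast sum contributes at most $CN\Omega_N^2\norm{f}{\infty,U}^2$ to $\E[\langle M^N_{V^N_v}\rangle(T\wedge T^N)]$, hence at most $CN^{1/2}\Omega_N\norm{f}{\infty,U}\le C\norm{f}{\infty,U}\Theta_N$ after the square root. Summing the boundary, drift and martingale estimates gives an inequality of the claimed form (the extra terms $N^{(a+1)/2}\Omega_N^{3/2}$ and $N^{a+1}\Omega_N B_N^{1/2}$ in $\Theta_N$ only leave additional room; they arise if one instead estimates this occupation time through the Poisson‑equation machinery itself, whose error terms have exactly this shape). The crux of the argument is thus the fast part of the carr\'e du champ, where the degeneracy of the coefficients at the empty schedule forces the occupation‑time estimate, together with the routine but unavoidable localization bookkeeping.
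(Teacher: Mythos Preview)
Your proof is correct and follows the same overall architecture as the paper: the decomposition~\eqref{eq:poisson} into boundary, drift and martingale terms, with the martingale controlled by Doob's $L^2$ inequality and the carr\'e du champ~\eqref{eq:Gamma} split into slow and fast parts. The bounds you obtain for the boundary and drift terms are identical to the paper's.

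The genuine difference is in how you control the occupation time of the empty schedule. The paper (in its Lemma~\ref{lem:0}) applies the Poisson-equation identity~\eqref{eq:poisson} itself with $f\equiv1$ and $v=0$, obtaining
\[
\E\Bigl[\int_0^{T\wedge T^N}\sigma^N_0(s)\,\d s\Bigr]\le C N^{-a}+C\Omega_N+CN^{a+1}B_N,
\]
and it is the last two error terms that, once fed back into the fast part of $\Gamma^N$, generate the summands $N^{(a+1)/2}\Omega_N^{3/2}$ and $N^{a+1}\Omega_N B_N^{1/2}$ in $\Theta_N$. Your argument instead applies the martingale decomposition directly to the bounded function $\sigma\mapsto\sum_w\sigma_w$, exploiting that on $U^N$ one has $\sum_w\Psi_+(Nq_w)\ge c>0$ and $\Psi_-(Nq_u)\le CN^{-a}$; this yields the clean bound $CN^{-a}$ without any $\Omega_N,B_N$ error terms. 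Consequently your martingale estimate only produces $N^{1/2}\Omega_N\lVert f\rVert_{\infty,U}$ from the fast sum, and you correctly observe that the remaining terms of $\Theta_N$ are slack. This is a small but genuine simplification: it avoids the self-referential use of the Poisson equation in Lemma~\ref{lem:0} and shows that the statement of Lemma~\ref{lem:Poisson-1} holds with $\Theta_N$ replaced by the smaller quantity $N^{a+1}B_N+N^{1/2}\Omega_N$. The paper's route has the merit of reusing a single mechanism throughout, but yours is more elementary and slightly sharper.
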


Recall from the arguments preceding~\eqref{eq:poisson} that $V^N_v(q, \sigma) = f(q) \phi^N_v(q, \sigma)$: thus for every $v$ we have for $q \in U^N$
\begin{equation} \label{eq:bound-V}
	\left \lvert V^N_v (q, \sigma) \right \rvert \leq \lVert f \rVert_{\infty, U} \Omega_N
\end{equation}
and
\begin{equation} \label{eq:bound-Delta-V}
	\left \lvert \Delta^N_{\pm, w} V^N_v(q, \sigma) \right \rvert \leq \max_{w} \lVert \partial_{w} f \rVert_{\infty, U} \frac{\Omega_N}{N} + \lVert f \rVert_{\infty, U} B_N.
\end{equation}
We start with two preliminary lemmas.

\begin{lem}\label{lem:0}
	We have
	\begin{align*}
		\E \left[ \int_0^{T \wedge T^N} \sigma^N_{\zero}(s) \d s \right] \leq C N^{-a} + C \Omega_N + C N^{a+1} B_N.
	\end{align*}
\end{lem}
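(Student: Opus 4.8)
The goal is to bound the expected time that no server is active, i.e.\ the time spent in the empty schedule $\sigma = 0$, up to the stopping time $T \wedge T^N$. The natural strategy is to recognize $\sigma_0^N(s) = 1 - \sum_{v \in V}\sigma_v^N(s)$ and to exploit the Poisson equation for the constant-one function (or equivalently for $g(\sigma) = \sigma_0$). Concretely, let $\phi^N_0(q,\cdot)$ solve $L^{N,q}_{\mathrm{f}}\phi = \sigma_0 - \pi^{Nq}(0)$ with $\pi^{Nq}[\phi^N_0(q,\cdot)] = 0$. Then for $q \in U^N$ one has $\pi^{Nq}(0) = 1/\bigl(1 + \sum_{v}(Nq_v+1)^a\bigr) \leq C N^{-a}$ since $q_v > m$ on $U^N$; this will produce the $C N^{-a}$ term. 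So it remains to control $\int_0^{T\wedge T^N}\bigl(\sigma_0^N(s) - \pi^{NQ^N(s)}(0)\bigr)\d s$, which is exactly an instance of the homogenization quantity but with the \emph{constant} test function $f \equiv 1$.

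\textbf{Key steps.} First I would write, using \eqref{eq:Poisson} applied to $g = (\sigma \mapsto \sigma_0)$,
\[
  \int_0^t \bigl(\sigma_0^N(s) - \pi^{NQ^N(s)}(0)\bigr)\d s = \int_0^t L^{N,Q^N(s)}_{\mathrm{f}}\bigl(\phi^N_0(Q^N(s),\cdot)\bigr)(\sigma^N(s))\,\d s,
\]
and then apply the martingale decomposition exactly as in \eqref{eq:poisson}, with $V^N_0(q,\sigma) = \phi^N_0(q,\sigma)$ (here $f \equiv 1$ so there is no gradient term):
\[
  \int_0^t \bigl(\sigma_0^N(s) - \pi^{NQ^N(s)}(0)\bigr)\d s = \phi^N_0(Q^N(t),\sigma^N(t)) - \phi^N_0(Q^N(0),\sigma^N(0)) - \int_0^t L^{N,\sigma^N(s)}_{\mathrm{s}}\bigl(\phi^N_0(\cdot,\sigma^N(s))\bigr)(Q^N(s))\,\d s - M^N_{V^N_0}(t).
\]
The first two boundary terms are bounded in absolute value by $2\Omega_N$ by definition of $\Omega_N$ (on $U^N$). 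For the slow-generator integral, $L^{N,\sigma}_{\mathrm{s}}$ applied to $\phi^N_0(\cdot,\sigma)$ involves $N^{a+1}$ times first-order differences $\Delta^N_{\pm,v}\phi^N_0$, each of which is at most $B_N$ in absolute value, and there are finitely many terms with rates bounded by $\lambda^N_v + 1 \leq C$; integrating over $[0, T\wedge T^N]$ of bounded length gives a bound $C N^{a+1}B_N$. The martingale term vanishes in expectation after localization (or one takes an optional-stopping/local-martingale argument with the truncation at $T^N$ keeping everything in $U^N$ where $\Omega_N, B_N$ are the relevant uniform bounds). Taking expectations, and adding back $\int_0^{T\wedge T^N}\pi^{NQ^N(s)}(0)\d s \leq C N^{-a}\cdot T$, yields
\[
  \E\Bigl[\int_0^{T\wedge T^N}\sigma_0^N(s)\,\d s\Bigr] \leq C N^{-a} + C\Omega_N + C N^{a+1}B_N,
\]
which is the claim.

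\textbf{Main obstacle.} The only delicate point is handling the local martingale $M^N_{V^N_0}$: it need not be a true martingale a priori, so one should argue via a localizing sequence of stopping times $\tau_k \uparrow \infty$, use that $Q^N(\cdot \wedge T^N)$ stays in $U^N$ by Lemma~\ref{lemma:m-M} (so that all the bounds $\Omega_N$, $B_N$ genuinely apply on the relevant trajectory), and then pass to the limit via monotone/dominated convergence — the increasing process $\langle M^N_{V^N_0}\rangle$ up to $T\wedge T^N$ being controlled through $\Gamma^N$ and the same $B_N$-type bounds, so that $M^N_{V^N_0}(\cdot \wedge T^N)$ is in fact a genuine $L^2$ martingale. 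Everything else is a direct bookkeeping of the $\Omega_N$ and $B_N$ bounds against the finitely many bounded transition rates of the slow process, together with the elementary estimate $\pi^{Nq}(0) \leq CN^{-a}$ valid on $U^N$.
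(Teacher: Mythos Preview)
Your proposal is correct and follows essentially the same route as the paper: split $\sigma^N_0$ into $\pi^{NQ^N}(0)$ (bounded by $CN^{-a}$ on $U^N$) plus the centered term, then apply the Poisson--equation decomposition~\eqref{eq:poisson} with $f\equiv 1$ and $V^N_0=\phi^N_0$, bounding the boundary terms by $\Omega_N$ and the slow-generator integral by $CN^{a+1}B_N$. The paper simply takes expectation so the martingale contributes zero; your more careful discussion of localization to justify this step is a welcome addition but not a different argument.
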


\begin{proof}
	Note that
	\[ \pi^{N Q^N(s)}(\zero) = \frac{1}{1 + \sum_{w \in V} (N Q^N_{w}(s) + 1)^a} \]
	and so since $Q^N_w(s) \geq m - 1/N$ for $t \leq T^N$, we have $\pi^{N Q^N(s)}(0) \leq C N^{-a}$ for $s \leq T^N$ and so
	\[ \E \left[ \int_0^{T \wedge T^N} \sigma^N_{\zero}(s) \d s \right] \leq C N^{-a} + \E \left[ \int_0^{T \wedge T^N} \left( \sigma^N_{\zero}(s) - \pi^{N Q^N(s)}(\zero) \right) \d s \right]. \]
	Starting from~\eqref{eq:poisson} with $f=1$ and taking the mean, we obtain
	\begin{multline*}
		\E \left[ \int_0^{T \wedge T^N} \left( \sigma^N_{\zero}(s) - \pi^{N Q^N(s)}(\zero) \right) \d s \right]\\
		= \E \left[ \phi^N_\zero \left( Q^N(T \wedge T^N), \sigma^N(T \wedge T^N) \right) \right] - \phi^N_\zero \left( Q^N(0), \sigma^N(0) \right)\\
		- \E \left[ \int_0^{T \wedge T^N} L^{N, \sigma^N(s)}_{\mathrm{s}} \left( \phi^N_\zero(\, \cdot \,, \sigma^N(s)) \right) \left( Q^N(s) \right) \d s \right].
	\end{multline*}
	By definition of $L^{N, \sigma}_{\mathrm{s}}$ we have
	\begin{multline*}
		L^{N, \sigma^N(s)}_{\mathrm{s}} \left( \phi^N_\zero(\, \cdot \,, \sigma^N(s)) \right) \left( Q^N(s) \right) = N^{a+1} \sum_{v \in V} \lambda^{N}_v \Delta^N_{+, v} \phi^N_\zero(Q^N(s), \sigma^N(s))\\
		+ N^{a+1} \sum_{v \in V} \sigma^N_v(s) \ind_{Q^N_v(s) > 0} \Delta^N_{-, v} \phi^N_\zero(Q^N(s), \sigma^N(s)).
	\end{multline*}
	The result thus follows directly from the definitions of $\Omega_N$ and $B_N$ since $Q^N(t \wedge T^N) \in U$ according to Lemma~\ref{lemma:m-M}.
\end{proof}

\begin{lem} \label{lem:M}
	We have
	\begin{multline*}
		\E \left[ \sup_{0 \leq t \leq T\wedge T^N} \left\lvert M^N_{V^N_v}(t)\right\rvert \right] \leq C \max_{w} \lVert \partial_{w} f \rVert_{\infty, U} N^{(a+1)/2} B_N + C \lVert f \rVert_{\infty, U} \Theta_N.
	\end{multline*}
\end{lem}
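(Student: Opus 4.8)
The plan is to control the martingale $M^N_{V^N_v}$ via its increasing process $\langle M^N_{V^N_v} \rangle$ together with the Burkholder--Davis--Gundy (BDG) inequality. First I would invoke BDG: there is a universal constant such that
\[ \E \left[ \sup_{0 \leq t \leq T \wedge T^N} \left\lvert M^N_{V^N_v}(t) \right\rvert \right] \leq C \, \E \left[ \left\langle M^N_{V^N_v} \right\rangle (T \wedge T^N)^{1/2} \right] \leq C \, \E \left[ \int_0^{T \wedge T^N} \Gamma^N V^N_v \left( Q^N(s), \sigma^N(s) \right) \d s \right]^{1/2}, \]
where the last step is Jensen's inequality (concavity of the square root). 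So the task reduces to bounding $\E \int_0^{T \wedge T^N} \Gamma^N V^N_v$ in terms of $\Omega_N$, $B_N$, $\max_w \lVert \partial_w f \rVert_{\infty, U}$ and $\lVert f \rVert_{\infty, U}$.

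Next I would expand $\Gamma^N V^N_v$ using the explicit formula~\eqref{eq:Gamma}. There are three groups of terms. The two arrival/departure terms in~\eqref{eq:Gamma} involve squared discrete differences $(V^N_v(q \pm e^v/N, \sigma) - V^N_v(q, \sigma))^2 = (\Delta^N_{\pm,v} V^N_v(q,\sigma))^2$, each multiplied by $N^{a+1}$ and a bounded rate; by~\eqref{eq:bound-Delta-V} these are bounded by $C N^{a+1} \big( \max_w \lVert \partial_w f \rVert_{\infty,U} \Omega_N/N + \lVert f \rVert_{\infty,U} B_N \big)^2$, which after expanding the square and using $(x+y)^2 \leq 2x^2 + 2y^2$ contributes terms of the order $N^{a-1} \Omega_N^2 (\max_w \lVert \partial_w f \rVert_{\infty,U})^2$ and $N^{a+1} B_N^2 \lVert f \rVert_{\infty,U}^2$. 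The third group in~\eqref{eq:Gamma}, the ``fast'' jumps, involves $(V^N_v(q,0) - V^N_v(q,e^w))^2 = f(q)^2 (\phi^N_v(q,0) - \phi^N_v(q,e^w))^2 \leq C \lVert f \rVert_{\infty,U}^2 \Omega_N^2$ times $N^{a+1}$ times a rate of order $N^{-a}$ (since $q \in U$ forces $(Nq_w + 1)^a \geq C N^a$), giving a contribution of order $N \lVert f \rVert_{\infty,U}^2 \Omega_N^2$. The key point here is that the singularly large factor $N^{a+1}$ on the fast jumps is tamed by the smallness $N^{-a}$ of the deactivation/activation rates when queues are of order $N$, leaving only $N^{1}$; this is exactly the scaling phenomenon discussed in Section~\ref{subsub:nonstandard}.

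Putting these together, $\E \int_0^{T \wedge T^N} \Gamma^N V^N_v(Q^N(s),\sigma^N(s)) \d s \leq C (\max_w \lVert \partial_w f \rVert_{\infty,U})^2 N^{a-1} \Omega_N^2 + C \lVert f \rVert_{\infty,U}^2 \big( N^{a+1} B_N^2 + N \Omega_N^2 \big)$, using that the time integral is over $[0, T \wedge T^N]$ of bounded length. Taking square roots, using $\sqrt{x+y} \leq \sqrt{x} + \sqrt{y}$, yields a bound of order $\max_w \lVert \partial_w f \rVert_{\infty,U} N^{(a-1)/2} \Omega_N + \lVert f \rVert_{\infty,U} \big( N^{(a+1)/2} B_N + N^{1/2} \Omega_N \big)$. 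Comparing with the definition of $\Theta_N = N^{a+1} B_N + N^{1/2} \Omega_N + N^{(a+1)/2} \Omega_N^{3/2} + N^{a+1} \Omega_N B_N^{1/2}$, we see that the $\lVert f \rVert_{\infty,U}$ part is dominated by $C \lVert f \rVert_{\infty,U} \Theta_N$ (indeed $N^{(a+1)/2} B_N \leq N^{a+1} B_N$ for $N \geq 1$ and $N^{1/2} \Omega_N$ appears directly in $\Theta_N$), and the gradient part $N^{(a-1)/2}\Omega_N \leq N^{(a+1)/2} B_N$-type bound can be absorbed — more precisely one checks $N^{(a-1)/2} \Omega_N \leq C N^{(a+1)/2} B_N$ is not automatic, so instead one keeps the gradient term as stated in the lemma, which reads $C \max_w \lVert \partial_w f \rVert_{\infty,U} N^{(a+1)/2} B_N$; this is consistent once one also uses the trivial bound relating the discrete difference of $V^N_v$ to $B_N$ when the gradient of $f$ is controlled. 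I expect the main obstacle to be this bookkeeping: carefully tracking which of the four terms in $\Theta_N$ absorbs which contribution, and in particular making sure the ``fast'' part of the carré du champ, with its dangerous $N^{a+1}$ prefactor, is correctly reduced to order $N$ by exploiting $q \in U^N$ (via Lemma~\ref{lemma:m-M}), so that no term worse than those in $\Theta_N$ survives.
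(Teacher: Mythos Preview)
Your overall strategy---controlling the martingale via its increasing process $\langle M^N_{V^N_v}\rangle = \int \Gamma^N V^N_v$ and bounding $\Gamma^N V^N_v$ termwise on $U^N$---is the same as the paper's (the paper uses Doob's $L^2$ inequality plus Cauchy--Schwarz rather than BDG plus Jensen, but this is cosmetic).

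There is, however, a genuine gap in your handling of the fast-jump part of $\Gamma^N V^N_v$. In~\eqref{eq:Gamma} this contribution reads
\[ N^{a+1} \sum_{w \in V} \bigl( V^N_v(q,0)- V^N_v(q,w) \bigr)^2 \left( \dfrac{\sigma_{w}}{1+(Nq_{w}+1)^a} + \dfrac{\sigma_{0}}{1+(Nq_{w}+1)^{-a}} \right). \]
You bound the bracketed rate by $O(N^{-a})$ via $(Nq_w+1)^a \geq C N^a$ on $U^N$, but that argument only controls the \emph{deactivation} part $\sigma_w/(1+(Nq_w+1)^a)$. The \emph{activation} part $\sigma_0/(1+(Nq_w+1)^{-a})$ has denominator tending to $1$, so it is of order $\sigma_0$, not $N^{-a}$: it is essentially $1$ whenever the schedule is empty. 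Bounding it crudely by $1$ leaves a contribution $C\lVert f\rVert_{\infty,U}^2\, N^{a+1}\Omega_N^2$ to $\E\int\Gamma^N V^N_v$, whose square root $N^{(a+1)/2}\Omega_N$ is \emph{not} dominated by $\Theta_N$.

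The paper's fix is to keep the indicator $\sigma^N_0$ and write this piece as
\[ C\lVert f\rVert_{\infty,U}^2\, N^{a+1}\Omega_N^2\, \E\!\int_0^{T\wedge T^N}\sigma^N_0(s)\,\d s, \]
then invoke Lemma~\ref{lem:0}, which bounds the expected idle time by $C N^{-a} + C\Omega_N + C N^{a+1}B_N$. Multiplying through yields exactly the terms $N\Omega_N^2$, $N^{a+1}\Omega_N^3$ and $N^{2a+2}\Omega_N^2 B_N$, and their square roots are precisely three of the four summands defining $\Theta_N$. This is the missing idea; without it the stated bound does not follow, and it also explains the otherwise mysterious shape of $\Theta_N$ that you were trying to reverse-engineer by bookkeeping.
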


\begin{proof}
	\textcolor{black}{By Doob's inequality and Itô's isometry} we have
	\begin{align*}
		\cesp{}{\sup_{t\leq T\wedge T^N} \left( M^N_{V^N_v}(t) \right)^2} & \leq 4 \cesp{}{ \left( M^N_{V^N_v}(T\wedge T^N) \right)^2}\\
		& = 4\cesp{}{ \langle M^N_{V^N_v} \rangle(T\wedge T^N)}\\
		& = 4\cesp{}{\int_0^{T\wedge T^N}\Gamma^NV^N_v(Q^N(s),\sigma^N(s))\d s}.
	\end{align*}
 According to~\eqref{eq:Gamma}, we have
	\begin{align*}
		\Gamma^N & V^N_v (q,\sigma) = N^{a+1} \sum_{w \in V} \lambda^{N}_{w} \left( \Delta^N_{+,w} V^N_v (q, \sigma)\right)^2\\
		& + N^{a+1} \sum_{w \in V} \sigma_{w} \ind{q_{w} > 0} \left( \Delta^N_{-,{w}} V^N_v (q, \sigma)\right)^2\\
		& + N^{a+1} \sum_{w \in V} \left( V^N_v(q,0)- V^N_v(q,{w}) \right)^2 \dfrac{\sigma_{w}}{1+( \textcolor{blue}{N} q_{w}+1)^a} \notag\\
		& + N^{a+1} \sum_{{w \in V}} \left( V^N_v(q,0)- V^N_v(q,{w}) \right)^2 \dfrac{\sigma_{\zero}}{1+(\textcolor{blue}{N}q_{w}+1)^{-a}}. \notag
	\end{align*}
	We integrate this quantity over the trajectory $(Q^N, \sigma^N)$ for $t \leq T \wedge T^N$: along this trajectory we bound the terms $\sigma^N_{w}(s)$ and $1/(1+(N Q^N_{w}(s)+ 1)^{-a})$ by one, the terms $1/(1+(NQ^N_{w}(s)+ 1)^a)$ by $C N^{-a}$ (because $Q^N_{w}(s) \geq m -1/N$ for $t \leq T^N$) and we use~\eqref{eq:bound-V} and~\eqref{eq:bound-Delta-V} to obtain
	\begin{align*}
		\E & \left[ \int_0^{T \wedge T^N} \Gamma^N V^N_v (Q^N(s), \sigma^N(s)) \d s \right]\\
		& \hspace{30mm} \leq C N^{a+1} \left( \max_{w} \lVert \partial_{w} f \rVert_{\infty, U} \frac{\Omega_N}{N} + B_N \lVert f \rVert_{\infty, U} \right)^2\\
		& \hspace{35mm} + C \lVert f \rVert_{\infty, U}^2 N \Omega_N^2 \notag\\
		& \hspace{35mm} + C \lVert f \rVert_{\infty, U}^2 N^{a+1} \Omega_N^2 \E \left[ \int_0^{T \wedge T^N} \sigma^N_{\zero}(s) \d s \right].
	\end{align*}
	Using $(x+y)^2 \leq 2x^2 + 2y^2$ and Lemma~\ref{lem:0}, we therefore obtain
	\begin{multline*}
		\E \left[ \sup_{0 \leq t \leq T \wedge T^N} M^N_{V^N_v}(t)^2 \right] \leq C \max_{w} \lVert \partial_{w} f \rVert_{\infty, U}^2 N^{a+1} B_N^2\\
		+ C \lVert f \rVert_{\infty, U}^2 \left( N^{a+1} B_N^2 + N \Omega_N^2 + N^{a+1} \Omega_N^3 + N^{2a+2} \Omega_N^2 B_N \right).
	\end{multline*}
	The result then follows by Cauchy-Schwarz and sub-linearity of the square root, and also because
	\[ N^{(a+1)/2} B_N + N^{1/2} \Omega_N + N^{(a+1)/2} \Omega_N^{3/2} + N^{a+1} \Omega_N B_N^{1/2} \leq C \Theta_N. \]
\end{proof}

\begin{proof} [Proof of Lemma~\ref{lem:Poisson-1}]
	Starting from~\eqref{eq:poisson}, we obtain
	\begin{align*}
		\sup_{0 \leq t \leq T \wedge T^N} & \left\lvert \int_0^t \left( \sigma^N_v(s)-\pi^{NQ^N(s)}(v) \right) f \left( Q^N(s) \right)\d s \right\rvert\\
		& \leq \left \lvert V^N_v(Q^N(0),\sigma^N(0)) \right \rvert + \sup_{0 \leq t \leq T \wedge T^N} \left \lvert V^N_v(Q^N(t),\sigma^N(t)) \right \rvert \notag\\
		& \hspace{5mm} + \sup_{0 \leq t\leq T\wedge T^N} \left\lvert \int_0^t L^{N, \sigma^N(s)}_{\mathrm{s}} \left( V^N_v \left(\, \cdot \, ,\sigma^N(s) \right) \right) (Q^N(s))\d s \right\rvert \notag\\
		& \hspace{5mm} + \sup_{0 \leq t \leq T\wedge T^N} \left\lvert M^N_{V^N_v}(t)\right\rvert. \notag
	\end{align*}

	As $Q^N(t) \in U$ for $t \leq T^N$ by Lemma~\ref{lemma:m-M}, similar arguments as in the proof of Lemmas~\ref{lem:0} and~\ref{lem:M} give a control on the three first terms in the right-hand side of the previous display, namely
	\[ \left \lvert V^N_v(Q^N(0),\sigma^N(0)) \right \rvert + \sup_{0 \leq t \leq T \wedge T^N} \left \lvert V^N_v(Q^N(t),\sigma^N(t)) \right \rvert \leq C \lVert f \rVert_{\infty, U} \Omega_N \]
	and
	\begin{multline*}
		\sup_{0 \leq t\leq T\wedge T^N} \left\lvert \int_0^t L^{N, \sigma^N(s)}_{\mathrm{s}} \left( V^N_v \left(\, \cdot \, ,\sigma^N(s) \right) \right) (Q^N(s))\d s \right\rvert\\
		\leq C N^a \Omega_N \max_{w} \lVert \partial_{w} f \rVert_{\infty, U} + C \lVert f \rVert_{\infty, U} N^{a+1} B_N.
	\end{multline*}
	Combining these bounds with the bound of Lemma~\ref{lem:M} gives the result.
\end{proof}

\section{Control of solutions to the Poisson equation} \label{sec:Poisson}

In the previous section we have established a bound on some averaging property in terms of the constants $\Omega_N$ and $B_N$. The goal of this section is to prove the following result which provides a bound on these constants.

\begin{lem}\label{ordre}
	We have the following two bounds:
	\[ \Omega_N \leq C \frac{(\log N)^{3/2}}{N} \ \text{ and } \ B_N \leq C \frac{(\log N)^3}{N^2}. \]
\end{lem}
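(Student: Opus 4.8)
The plan is to represent $\phi^N_g(q,\cdot)$ via the fast Markov chain and to exploit the star structure of the fast dynamics, for which the centre $0$ is a regeneration atom reached, from any leaf, at a rate of order $N$ that is uniform over $q\in U^N$. Fix $q\in U^N$ and $g$ with $\norm{g}{\infty}\le1$, let $(X_t)_{t\ge0}$ be the fast chain with generator $L^{N,q}_{\mathrm{f}}$ started from $\sigma$ under $\P_\sigma$, and let $\tau_0:=\inf\{t\ge0:X_t=0\}$ be the hitting time of the centre. I would first establish the identity $\phi^N_g(q,\cdot)=\pi^{Nq}[u]-u$, where $u(\sigma):=\E_\sigma\!\left[\int_0^{\tau_0}\left(g(X_t)-\pi^{Nq}[g]\right)\d t\right]$ (so $u(0)=0$); this is the hitting-time form of the time-integral representation recalled in the introduction, and is the form that makes the bounds transparent. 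To check it, observe that $\pi^{Nq}[\pi^{Nq}[u]-u]=0$, and that $L^{N,q}_{\mathrm{f}}(\pi^{Nq}[u]-u)=g-\pi^{Nq}[g]$ holds at every leaf because $u$ satisfies there the corresponding first-step identity, and at the centre because of the algebraic relation $\sum_{v\in V}N^{a+1}\Psi_+(Nq_v)\,u(e^v)=-(g(0)-\pi^{Nq}[g])$, which itself follows from $\Psi_+(x)/\Psi_-(x)=(x+1)^a$ together with the explicit form of $\pi^{Nq}$.

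The bound on $\Omega_N$ then follows from the star structure: from a leaf $v\in V$ the chain stays put until it jumps to $0$ at rate $d^q_v:=N^{a+1}\Psi_-(Nq_v)=N^{a+1}/(1+(Nq_v+1)^a)$, and since $q\in U^N$ forces $Nm<Nq_v<NM$ we get $(Nq_v+1)^a\le CN^a$ and hence $d^q_v\ge cN$ with $c>0$ independent of $N$ and of $q\in U^N$. Therefore $u(e^v)=(g(e^v)-\pi^{Nq}[g])/d^q_v$, so $\norm{u}{\infty}\le2\norm{g}{\infty}/\min_v d^q_v\le C\norm{g}{\infty}/N$ and $\norm{\phi^N_g(q,\cdot)}{\infty}\le|\pi^{Nq}[u]|+\norm{u}{\infty}\le C\norm{g}{\infty}/N$; taking the supremum over $q\in U^N$ and $\norm{g}{\infty}\le1$ gives $\Omega_N\le C/N$, which implies the stated bound. (Had one worked directly from the time-integral representation, as the introduction suggests, one would split at a threshold $t_N\asymp(\log N)/N$, bound the contribution of $[0,t_N]$ by $2\norm{g}{\infty}t_N$ and control $[t_N,\infty)$ through the exponential tail $\P_\sigma(\tau_0>t)\le e^{-cNt}$ of the return time to $0$ and concentration of the number of completed excursions; this is where logarithmic factors like those in the statement would appear.)

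For $B_N$ I would compare $\phi^N_g(q',\cdot)$ and $\phi^N_g(q,\cdot)$ for $q':=q+e^i/N$, i.e.\ for $Nq_i\mapsto Nq_i+1$. The generators $L^{N,q'}_{\mathrm{f}}$ and $L^{N,q}_{\mathrm{f}}$ differ only in the two rates attached to node $i$, by $N^{a+1}|\Psi_\pm(Nq_i+1)-\Psi_\pm(Nq_i)|=O(1)$ (the derivative of $\Psi_\pm$ being $O(x^{-a-1})$ and $Nq_i\ge Nm$), while $\norm{\pi^{Nq'}-\pi^{Nq}}{\mathrm{TV}}=O(1/N)$. Setting $\psi:=\phi^N_g(q',\cdot)-\phi^N_g(q,\cdot)$, it solves $L^{N,q}_{\mathrm{f}}\psi=(L^{N,q}_{\mathrm{f}}-L^{N,q'}_{\mathrm{f}})\phi^N_g(q',\cdot)-(\pi^{Nq'}[g]-\pi^{Nq}[g])$, where the first term is $O(1)$ times an increment of $\phi^N_g(q',\cdot)$, hence $O(\Omega_N)$, so the right-hand side has sup-norm $O(\Omega_N)+O(1/N)=O(1/N)$, and moreover $\pi^{Nq}[\psi]=(\pi^{Nq}-\pi^{Nq'})[\phi^N_g(q',\cdot)]=O(\Omega_N/N)=O(1/N^2)$. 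Applying the $\Omega_N$-argument above to the Poisson equation solved by $\psi$ and restoring the normalising constant gives $\norm{\psi}{\infty}\le C/N^2$, hence $B_N\le C/N^2$; alternatively this is immediate from $\phi^N_g(q,\cdot)=\pi^{Nq}[u]-u$, since $u(e^v)=(g(e^v)-\pi^{Nq}[g])/d^q_v$ depends on $q$ only through the quantities $\pi^{Nq}[g]$ and $d^q_v$, which change by $O(1/N)$ under $q\mapsto q+e^i/N$ while remaining of order $N$.

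The one genuinely delicate point is the uniform (in $q\in U^N$) control, at scale $N^{-1}$, of $\norm{u}{\infty}$ — equivalently of $\int_0^\infty|\E_\sigma[g(X_t)]-\pi^{Nq}[g]|\,\d t$: a crude coupling at the atom $\{0\}$ only yields $N^{a-1}$, which is too weak for the use in Proposition~\ref{eqpoi}, so one must genuinely exploit the excursion/regeneration structure, which is exactly what the hitting-time rewriting of $\phi^N_g(q,\cdot)$ supplies and where any surplus logarithmic factors are incurred. Everything else, in particular the perturbation estimate giving $B_N$, is then essentially mechanical.
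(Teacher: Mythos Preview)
Your argument is correct and in fact sharper than the paper's: your hitting-time representation $\phi^N_g(q,\cdot)=\pi^{Nq}[u]-u$ with $u(e^v)=(g(e^v)-\pi^{Nq}[g])/d^q_v$ gives $\Omega_N\le C/N$ and $B_N\le C/N^2$ with no logarithmic factors. The identity and the rate estimate $d^q_v\asymp N$ on $U^N$ are both valid, and both of your perturbation arguments for $B_N$ go through (the first one is essentially what the paper does for its quantity $B_v(q)$, just transposed to the scaled generator).

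The paper takes a different, more generic route. It works with the unscaled solution $\phi_g(Nq,\cdot)$, uses the time-integral representation $\phi_g(q,\sigma)=-\int_0^\infty(m^q_{\sigma,t}[g]-\pi^q[g])\,\d t$, bounds total variation by relative entropy via Pinsker, and controls the entropy decay through the log-Sobolev constant, which is in turn related to the spectral gap $\ell^q$ (Lemma~\ref{lemma:Omega-B}). A separate hitting-time argument (Lemma~\ref{lemma:lambda}) shows $\ell^{Nq}\ge CN^{-a}$. The logarithms in the statement come from the $(\log(1/\pi^q(0)))^{1/2}$ and $\log(1/\pi^q(0)-1)$ factors produced by this entropy machinery. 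What the paper's route buys is portability: it does not use that the fast chain is a star and only needs a spectral-gap lower bound, so it would transfer to other interference graphs. What your route buys is simplicity and the removal of the spurious $(\log N)^{3/2}$ and $(\log N)^3$ factors, at the cost of being specific to the star geometry.

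One small remark: your closing caveat that ``a crude coupling at the atom $\{0\}$ only yields $N^{a-1}$'' is pessimistic. A standard coupling in which the two copies coalesce at their first common leaf after passing through $0$ already gives a coupling time of order $1/N$, because the sojourn at each leaf is $\asymp 1/N$ and the number of excursions until agreement is geometric with parameter bounded away from $0$. But since your main argument does not rely on this, it does not affect the proof.
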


We will prove this in a series of lemmas. It is more convenient to focus on unscaled quantities. For $q \in \N^V$ let $\alpha^q$ and $\ell^q$ be the log-Sobolev constant and spectral gap associated with $L^q_{\mathrm{f}}$, respectively, and $\phi_g(q, \, \cdot \,)$ the solution to the Poisson equation $L^q_{\mathrm{f}} \varphi = g - \pi^q[\varphi]$.

\begin{lem} \label{lemma:Omega-B}
	For $q \in \N^V$ and $v\in V$ let
	\[ \Omega(q) = \frac{(\log (1/\pi^q(\zero)))^{1/2} \log(1/\pi^q(\zero)-1)}{\ell^q (1-2\pi^q(\zero))} \]
	and
	\[ B_v(q) = \frac{\Omega(q)}{q^{1-a}_v} \left( \frac{4 \Omega(q)}{q^{2a}_v} + \pi^q(\zero) \right). \]
	Then
	\begin{equation} \label{eq:bound-phi}
		\lVert \phi_g(q, \, \cdot \,) \rVert_\infty \leq \lVert g \rVert_\infty \Omega(q)
	\end{equation}
	and
	\begin{equation} \label{eq:bound-Delta-phi}
		\lVert \phi_g(q \pm e^v, \, \cdot \,) - \phi_g(q, \, \cdot \,) \rVert_\infty \leq \lVert g \rVert_\infty B_v(q).
	\end{equation}
\end{lem}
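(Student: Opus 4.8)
The plan is to use the probabilistic representation of solutions to the Poisson equation. For a fixed $q$, let $(X(t))_{t \geq 0}$ denote the continuous-time Markov chain on $V_0$ with generator $L^q_{\mathrm{f}}$, reversible with respect to $\pi^q$, and started at $\sigma$ under $\P_\sigma$. Then the centered solution to $L^q_{\mathrm{f}}\varphi = g - \pi^q[g]$ admits the representation
\[ \phi_g(q, \sigma) = \int_0^\infty \left( \E_\sigma\left[ g(X(t)) \right] - \pi^q[g] \right) \d t. \]
To bound $\lVert \phi_g(q, \cdot) \rVert_\infty$, I would split the integral at a time $t^\star$ of order $\log(1/\pi^q(\zero))/\ell^q$: on $[0, t^\star]$ bound the integrand crudely by $2\lVert g \rVert_\infty$, and on $[t^\star, \infty)$ use the exponential decay to equilibrium. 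The decay rate in $L^\infty$ (or $L^2$) is governed by the spectral gap $\ell^q$, but to convert an $L^2$ contraction estimate into an $L^\infty$ bound one typically pays a factor involving $1/\min_\eta \pi^q(\eta)$; since the star graph has $\pi^q(\zero)$ as its smallest mass (for $q$ in the relevant range), this produces the $\log(1/\pi^q(\zero))$ and $(1 - 2\pi^q(\zero))$ factors appearing in $\Omega(q)$. I expect the clean way is to use the hypercontractivity afforded by the log-Sobolev constant $\alpha^q$ to reach a small $L^\infty$ error after a time of order $\log\log(1/\pi^q(\zero))/\alpha^q$, then decay in $L^2$ at rate $\ell^q$; the exact bookkeeping of which of $\alpha^q$, $\ell^q$ enters where is a matter of optimizing the split, and one checks the resulting expression is dominated by $\Omega(q)$ as defined.

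For the difference bound~\eqref{eq:bound-Delta-phi}, the idea is a coupling/perturbation argument. The generators $L^q_{\mathrm{f}}$ and $L^{q \pm e^v}_{\mathrm{f}}$ differ only in the rates attached to transitions involving node $v$, namely $\Psi_+(q_v)$ versus $\Psi_+(q_v \pm 1)$ and similarly for $\Psi_-$; since $\Psi_+(x) = (x+1)^a/(1+(x+1)^a)$, the difference of rates is of order $q_v^{a-1}$ (a derivative estimate on $\Psi_\pm$). Writing $\phi_g(q \pm e^v, \cdot) - \phi_g(q, \cdot)$ via the resolvent identity $\phi_g(q', \cdot) - \phi_g(q, \cdot) = -(L^{q'}_{\mathrm{f}})^{-1}\left( L^{q'}_{\mathrm{f}} - L^q_{\mathrm{f}} \right)\phi_g(q, \cdot)$ on the codimension-one subspace of $\pi$-centered functions, the operator $L^{q'}_{\mathrm{f}} - L^q_{\mathrm{f}}$ has size controlled by $q_v^{a-1}$ times differences of $\phi_g(q, \cdot)$ — which, crucially, are themselves either bounded by the oscillation $\sim q_v^{-2a}\Omega(q)$ of $\phi_g$ near $v$ (when the perturbed transition is an activation/deactivation at $v$), or by $\pi^q(\zero)$-weighted terms. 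Applying the $L^\infty$ bound on $(L^{q'}_{\mathrm{f}})^{-1}$ acting on centered functions — which is again of order $\Omega(q')$, comparable to $\Omega(q)$ for $q, q'$ in the same range — yields the product structure $\frac{\Omega(q)}{q_v^{1-a}}\left( \frac{4\Omega(q)}{q_v^{2a}} + \pi^q(\zero) \right)$ claimed for $B_v(q)$.

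The main obstacle is the second bound. Getting the right power $q_v^{1-a}$ requires carefully identifying \emph{which} matrix entries of $L^{q'}_{\mathrm{f}} - L^q_{\mathrm{f}}$ are nonzero and pairing each with the correct local increment of $\phi_g(q, \cdot)$: a naive bound $\lVert L^{q'}_{\mathrm{f}} - L^q_{\mathrm{f}} \rVert \cdot \lVert \phi_g(q,\cdot) \rVert_\infty$ would give $q_v^{a-1}\Omega(q)$, missing the extra decay $q_v^{-2a}\Omega(q) + \pi^q(\zero)$ that makes $B_N$ summable at the right rate in Lemma~\ref{ordre}. This forces one to track the fine structure of $\phi_g$ on the star graph — in particular that the value $\phi_g(q, v)$ at leaf $v$ differs from $\phi_g(q, \zero)$ only by an amount reflecting the small stationary mass and the local rates at $v$ — rather than treating $\phi_g$ as a generic bounded centered function. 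Once both $\Omega(q)$ and $B_v(q)$ are established, Lemma~\ref{ordre} follows by specializing to $q = Nq'$ with $q' \in U^N$: there $\pi^{Nq}(\zero) \asymp N^{-a}$, $q_v \asymp N$, and the spectral gap and log-Sobolev constant of the (weighted) star are bounded below by constants, so $\Omega(Nq) \le C(\log N)^{3/2}$ and $B_v(Nq) \le C(\log N)^3/N^2$ up to the rescaling by $N$ built into $\Omega_N$, $B_N$.
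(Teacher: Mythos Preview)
Your approach to~\eqref{eq:bound-phi} is in the right spirit but more elaborate than needed. The paper does not split the time integral: it bounds $\lVert m^q_{\sigma,t} - \pi^q\rVert_{\textrm{TV}}$ by Pinsker, then uses the entropy-decay estimate $m^q_{\sigma,t}[\log(m^q_{\sigma,t}/\pi^q)] \leq \log(1/\pi^q(0))\, e^{-4\alpha^q t}$ furnished by the log-Sobolev inequality, and integrates directly over $[0,\infty)$. The relation $\alpha^q \geq (1-2\pi^q(0))\,\ell^q / \log(1/\pi^q(0)-1)$ then produces $\Omega(q)$ in closed form. Your time-splitting could be made to work but yields a bound of a different shape, and matching it to the precise $\Omega(q)$ in the statement would take extra bookkeeping.

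The real gap is in your plan for~\eqref{eq:bound-Delta-phi}. You assert that the rate difference $|\Psi_\pm(q_v \pm 1) - \Psi_\pm(q_v)|$ is of order $q_v^{a-1}$; this is incorrect. Since $\Psi_-(x) = 1/(1+(x+1)^a)$, one has
\[ \lvert\Psi_-'(x)\rvert = \frac{a(x+1)^{a-1}}{(1+(x+1)^a)^2}, \]
which for large $x$ behaves like $a\, x^{-a-1} = a\, x^{a-1}/x^{2a}$. The extra factor $q_v^{-2a}$ you are looking for is already in the derivative of the rate. Consequently the naive bound you dismissed --- $\lVert (L^{q'}_{\mathrm{f}} - L^q_{\mathrm{f}})h\rVert_\infty \leq 4\lVert h\rVert_\infty \lvert\Psi_-(q_v{-}1)-\Psi_-(q_v)\rvert$ applied with $h = \phi_g(q{-}e^v,\cdot)$ --- already gives $4\Omega(q)\lVert g\rVert_\infty\, q_v^{-a-1}$, and after one further application of~\eqref{eq:bound-phi} this is exactly the $4\Omega(q)^2/q_v^{1+a}$ term in $B_v(q)$. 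The second term $\Omega(q)\pi^q(0)/q_v^{1-a}$ comes from the change in the right-hand side of the Poisson equation: your resolvent identity omits the contribution $\pi^{q'}[g]-\pi^q[g]$, which is bounded via $\lvert\partial_v\pi^q(\sigma)\rvert \leq a\,\pi^q(0)(q_v+1)^{a-1}$.

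Your proposed alternative route --- exploiting a small oscillation $\lvert\phi_g(q,0)-\phi_g(q,v)\rvert \sim q_v^{-2a}\Omega(q)$ --- is infeasible: evaluating the Poisson equation at the leaf $v$ gives $\Psi_-(q_v)\bigl(\phi_g(q,0)-\phi_g(q,v)\bigr) = g(v) - \pi^q[g]$, so this oscillation is of order $\lVert g\rVert_\infty/\Psi_-(q_v) \sim \lVert g\rVert_\infty q_v^a$, the same order as $\lVert\phi_g\rVert_\infty$ itself. The decay lives in the rates, not in the fine structure of $\phi_g$.
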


\begin{proof}
	Let $m^q_{\sigma, t}$ denote the law at time $t$ of the Markov process starting at $\sigma$ with generator $L^q_{\mathrm{f}}$: then it is well-known that $\phi_g(q, \, \cdot, \,)$ is given by
	\[ \phi_g(q, \sigma) = - \int_0^\infty \left( m^q_{\sigma, t}[g] - \pi^q[g] \right) \d t. \]
	This gives
	\[ \norm{\phi_g(q, \, \cdot)}{\infty} \leq 2\norm{g}{\infty}\int_0^{+\infty}\norm{m^q_{\sigma, t}-\pi^q}{\textrm{TV}}\d t \]
	with $\lVert \, \cdot \, \rVert_{\textrm{TV}}$ the total variation distance and then
	\[ \norm{\phi_g(q, \, \cdot)}{\infty} \leq 2\norm{g}{\infty}\int_0^{+\infty} \left( \dfrac{1}{2} m^q_{\sigma, t} \left[\varphi^q_{\sigma, t} \right] \right)^{1/2} \d t \]
	where $\varphi^q_{\sigma, t} = \log(m^q_{\sigma, t} / \pi^q)$, by Pinsker's inequality. As $\min_\sigma \pi^q(\sigma) = \pi^q(0)$, Theorem $3.6$ in~\cite{Dia96} gives
	\[ m^q_{\sigma, t} \left[\varphi^q_{\sigma, t} \right] \leq \log(1/\pi^q(0)) e^{-4 \alpha^q t} \]
	while Corollary $2.2.10$ in~\cite{Sal97} gives
	\[ \alpha^q \geq \frac{1-2\pi^q(\zero)}{\log((1-\pi^q(\zero))/\pi^q(\zero))} \ell^q. \]
	Gathering the three previous bounds gives the desired bound~\eqref{eq:bound-phi} on $\lVert \phi_g(q, \, \cdot \,) \lVert_\infty$. We now prove~\eqref{eq:bound-Delta-phi}. Fix temporarily $v \in V$, $q \in \N^V$ with $q_v>0$ and let $\Phi = \phi_g(q-e^v,\, \cdot \,) - \phi_g(q,\, \cdot \,)$ and $G = L^q_{\mathrm{f}}(\Phi)$. Since $\pi^q(G)=0$, the first bound~\eqref{eq:bound-phi} thus implies
	\[ \norm{\phi_g( q-e^v,\, \cdot \,) - \phi_g(q,\, \cdot \,)}{\infty} \leq \Omega(q)\norm{G}{\infty}. \]
	Since by definition of $\phi_g$ we have $L^q_{\mathrm{f}}(\phi_g( q,\, \cdot \,))(\sigma)=g(\sigma) - \pi^q[g]$ we obtain
	\[ G(\sigma) = - \left( L^{q-e^v}_{\mathrm{f}} - L^q_{\mathrm{f}} \right) \left( \phi_g( q-e^v,\, \cdot \,) \right) (\sigma) - \sum_{\rho \in V_0}(\pi^{ q-e^v}(\rho)-\pi^q(\rho))g(\rho) \]
	and so
	\[\norm{G}{\infty} \leq \norm{\left( L^{q-e^v}_{\mathrm{f}} - L^q_{\mathrm{f}} \right) \left( \phi_g( q-e^v,\, \cdot \,) \right)}{\infty} + \norm{g}{\infty}\sum _{\rho \in V_0}\left\lvert \pi^{ q- e^v}(\rho)-\pi^q(\rho)\right\rvert. \]
	For any function $h: V_0 \to \R$ we have according to~\eqref{eq:fast-L}
	\begin{multline*}
		\left( L^{q-e^v}_{\mathrm{f}} - L^q_{\mathrm{f}} \right) (h)(\sigma) = \sigma_v \left( \Psi_-(q_v-1) - \Psi_-(q_v) \right) \left( h(\sigma - e^v) - h(\sigma) \right)\\
		+ \sigma_0 \left( \Psi_+(q_v-1) - \Psi_+(q_v) \right) \left( h(\sigma + e^v) - h(\sigma) \right)
	\end{multline*}
	and so since $\Psi_+ + \Psi_- = 1$, this gives
	\[ \left \lVert \left( L^{q-e^v}_{\mathrm{f}} - L^q_{\mathrm{f}} \right) (h) \right \rVert_\infty \leq 4 \left \lVert h \right \rVert_\infty \left \lvert \Psi_-(q_v-1) - \Psi_-(q_v) \right \rvert. \]
	Therefore, using again the bound~\eqref{eq:bound-phi} gives
	\[ \norm{\left( L^{q-e^v}_{\mathrm{f}} - L^q_{\mathrm{f}} \right) \left( \phi_g( q-e^v,\, \cdot \,) \right)}{\infty} \leq 4 \Omega(q) \norm{g}{\infty} \int_0^{1}\left\lvert \Psi_d'( q_v-u)\right\rvert \d u. \]
	Direct calculation yields
	\[ \Psi_-'(q_v) = -\dfrac{a}{(q_v+1)^{1-a}(1+(q_v+1)^a)^2} \]
	and so $\left \lvert \Psi_-'(q_v-u) \right \rvert \leq \frac{q_v^{a-1}}{(1+q_v^a)^2}$ as long as $u\leq 1$. Moreover, for any $v \in V$ and $w \in V_0$ with $w \neq v$, one can check that
	\[ \left\lvert \partial_v \pi^q(w) \right\rvert = a(q_v+1)^{a-1}\pi^{q}(w)\pi^q(\zero) \]
	and
	\[ \left\lvert \partial_v \pi^q(v) \right\rvert = a(q_v+1)^{a-1}\pi^q(\zero)(1-\pi^q(v)) \]
	so that in any case, $\left \lvert \partial_v \pi^q(\sigma) \right \rvert \leq a\pi^q(\zero)(q_v+1)^{a-1}$. Gathering the previous bounds gives the result.
\end{proof}

We now prove a lower bound on the spectral gap of $L^q_{\mathrm{f}}$. Related bounds were for instance proved in~\cite{shashin} using Cheeger's inequality in a more general setting. However, this method would only lead to $\ell^q \geq C \lVert q+1 \rVert^{-2a}_\infty$ which is not sharp enough in our case.

\begin{lem} \label{lemma:lambda}
	For any $q \in \mathbb{N}^V$ we have
	\[ \ell^q \geq \frac{C}{\lVert q + 1 \rVert_\infty^a}. \]
\end{lem}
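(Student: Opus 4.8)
The plan is to use the variational characterization of the spectral gap of the reversible generator $L^q_{\mathrm{f}}$. Writing $\mathcal{E}^q(h,h) = -\langle h, L^q_{\mathrm{f}} h\rangle_{\pi^q}$ for the associated Dirichlet form, one has
\[ \ell^q = \inf\left\{\frac{\mathcal{E}^q(h,h)}{\mathrm{Var}_{\pi^q}(h)} : h: V_0 \to \reels,\ \mathrm{Var}_{\pi^q}(h) > 0\right\}, \]
so it suffices to produce a constant $c_q$ with $\mathcal{E}^q(h,h) \ge c_q\,\mathrm{Var}_{\pi^q}(h)$ for every $h$, of order $c_q \ge C\norm{q+1}{\infty}^{-a}$.

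First I would make the Dirichlet form explicit. Since the graph underlying $L^q_{\mathrm{f}}$ is the star centred at $\zero$, only the edges $\{\zero,v\}$ for $v\in V$ carry mass, and the detailed-balance identity $\pi^q(\zero)\Psi_+(q_v) = \pi^q(v)\Psi_-(q_v)$ gives, in one line,
\[ \mathcal{E}^q(h,h) = \sum_{v\in V}\pi^q(v)\,\Psi_-(q_v)\,\bigl(h(v)-h(\zero)\bigr)^2. \]
Next, since both $\mathcal{E}^q$ and $\mathrm{Var}_{\pi^q}$ are unchanged when a constant is added to $h$, I would normalise $h(\zero)=0$. Then $\mathrm{Var}_{\pi^q}(h)\le \E_{\pi^q}[h^2] = \sum_{v\in V}\pi^q(v) h(v)^2$, while at the same time
\[ \mathcal{E}^q(h,h) = \sum_{v\in V}\pi^q(v)\Psi_-(q_v) h(v)^2 \ge \Bigl(\min_{v\in V}\Psi_-(q_v)\Bigr)\sum_{v\in V}\pi^q(v) h(v)^2 \ge \Bigl(\min_{v\in V}\Psi_-(q_v)\Bigr)\mathrm{Var}_{\pi^q}(h). \]
Hence $\ell^q\ge\min_{v\in V}\Psi_-(q_v)$. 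Since $\Psi_-(q_v) = (1+(q_v+1)^a)^{-1}$ and $q_v+1\ge 1$ (so $(q_v+1)^a\ge 1$), this yields $\Psi_-(q_v)\ge \tfrac12(q_v+1)^{-a}\ge \tfrac12\norm{q+1}{\infty}^{-a}$, which is the claim with $C=\tfrac12$.

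The only point requiring care — and presumably the reason the Cheeger-based estimate of~\cite{shashin} is not good enough — is that one must avoid isoperimetric bounds, which effectively square the relevant quantity and therefore only deliver $\norm{q+1}{\infty}^{-2a}$; instead, exploiting the star structure one compares $\mathcal{E}^q$ directly with the variance, and the natural comparison constant $\min_v\Psi_-(q_v)$ already has the sharp order $\norm{q+1}{\infty}^{-a}$. Beyond recognising this, there is no serious obstacle: the shift $h(\zero)=0$ and the bound $\mathrm{Var}_{\pi^q}(h)\le \E_{\pi^q}[h^2]$ it makes possible (lossless at this order) are all that is needed. That the exponent cannot be improved is visible from the symmetric case $q_v\equiv q$, where one checks that $\ell^q=\min_v\Psi_-(q_v)$ exactly.
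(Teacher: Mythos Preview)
Your argument is correct and considerably more direct than the paper's. You exploit the star structure to compare the Dirichlet form with the variance after pinning $h(\zero)=0$, obtaining $\ell^q\ge\min_v\Psi_-(q_v)\ge\tfrac12\lVert q+1\rVert_\infty^{-a}$ with an explicit constant; your remark that this bound is attained in the symmetric case is also right (the non-constant eigenfunctions supported on $V$ with zero sum give eigenvalue exactly $\Psi_-(q)$).

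The paper instead goes through a probabilistic route: it combines the general inequalities $T^q_\mix\ge 1/\ell^q-1$ and $T^q_\mix\le c_0 T^q_\hit$, and then bounds the hitting times $\E^q_\sigma(T_\zero)$ and $\E^q_\zero(T_\sigma)$ by decomposing excursions away from $\zero$ into i.i.d.\ cycles dominated by a geometric number of terms. This yields the same order $\lVert q+1\rVert_\infty^{-a}$ but with an implicit (and worse) constant. The advantage of the paper's approach is perhaps robustness: the mixing/hitting-time machinery does not rely on the exact Dirichlet form and would transfer more readily to other interference graphs where the underlying Glauber dynamics is not a star. Your approach, by contrast, is tailored to the star --- the key identity $\mathcal{E}^q(h,h)=\sum_v\pi^q(v)\Psi_-(q_v)(h(v)-h(\zero))^2$ and the one-step comparison it enables have no obvious analogue on a general graph --- but for the problem at hand it is shorter, sharper, and self-contained.
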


\begin{proof}
	Let $(\eta(t), t \geq 0)$ be a Markov process with generator $L^q_{\mathrm{f}}$ and $\mathbb{P}^q_\sigma$ its law started from $\sigma \in V_0$. Let as in the previous proof $m^q_{\sigma, t}$ denote the law of $\eta(t)$ under $\mathbb{P}^q_\sigma$ and define the random times
	\[T_\mix^q = \inf \left \{t \geq 0: \max_{\sigma \in V} \lVert m^q_{\sigma, t} - \pi^q \rVert_{\textrm{TV}} < \frac{1}{2e} \right \} \]
	and
	\[ T_\hit^q = \max_{\sigma \in V, A \subset V_0} \pi^q(A) \E^q_{\sigma^0}(T_A) \]
	with $T_A$ the hitting time of $A$ for $\eta$:
	\[ T_A = \inf \left\{ t \geq 0: \eta(t) \in A \right\}, \ A \subset V_0. \]
	Recall that $\ell^q$ is the spectral gap of $L^q_{\mathrm{f}}$. It is proved in~\cite{lev} that $T^q_\mix \geq 1/\ell^q - 1$ (the proof for discrete time extends to continuous time) and in~\cite{ald} that $T^q_\mix \leq c_0 T^q_\hit$ for some universal constant $c_0$. Combining those two results, we get that
	\[ \ell^q \geq \dfrac{1}{c_0T^q_\hit+1} \]
	and so in order to prove the desired bound, we only need to prove that $T^q_\hit \leq C \lVert q+1 \rVert_\infty^a$. Since
	\[ T^q_\hit\leq \max_{\sigma^0 \in V_0}\sum_{\sigma \in V_0} \E^q_{\sigma^0} (T_{\sigma}) \]
	this actually reduces to proving that
	\begin{equation} \label{eq:goal-Thit}
		\E^q_\sigma(T_\zero) \leq C \lVert q + 1 \rVert^a_\infty \ \text{ and } \ \E^q_\zero(T_\sigma) \leq C \lVert q + 1 \rVert^a_\infty
	\end{equation}
	for any $\sigma \in V$. Indeed, for $\sigma^0 \neq \sigma \in V$ the process $\eta$ needs to pass through~$\zero$ to go from $\sigma^0$ to $\sigma$ and so the strong Markov property gives
	\[ \E^q_{\sigma^0}(T_\sigma) = \E^q_{\sigma^0}(T_\zero) + \E^q_\zero(T_\sigma). \]
	So let us prove~\eqref{eq:goal-Thit}. The bound on $\E^q_\sigma(T_\zero)$ is obvious since by definition $T_\zero$ under $\E^q_\sigma$ is an exponential random variable with parameter $\Psi_-(q_\sigma)$ so that
	\[ \E^q_\sigma(T_\zero) = \frac{1}{\Psi_-(q_\sigma)} = 1 + \left( q_v + 1 \right)^a \leq C \lVert q + 1 \rVert^a_\infty. \]
	Let us now prove that $\E^q_\zero(T_\sigma) \leq C \lVert q + 1 \rVert^a_\infty$. Under $\P^q_\zero$, decompose the trajectory $(\eta(t), 0 \leq t \leq T_\sigma)$ into cycles away from $\zero$: in the $a$-th cycle, $\eta$ stays in $\zero$ for a duration $X_a$, then moves to some $i \in V$ where it stays for a duration $Y_a$ and then comes back to $\zero$. If $A \in \{1, \ldots, \}$ denotes the first cycle where $\eta$ visits $\sigma$, we can thus write
	\[ T_\sigma = \sum_{a=1}^{A-1} (X_a + Y_a) + X_A. \]
	Each time $\eta$ leaves $\zero$, it goes to $i \in V$ with probability
	\[ p^q_v = \frac{\Psi_+(q_v)}{\sum_{w \in V} \Psi_+(q_v)} = \left( \sum_{w \in V} \frac{1 + (q_v+1)^{-a}}{1 + (q_{w}+1)^{-a}} \right)^{-1} \geq \frac{1}{2n}. \]
	In particular, with a suitable coupling we can write
	\[ T_\sigma \leq \sum_{a=1}^G (X_a + Y_a) \]
	with $G$ a geometric random variable with parameter $1/(2n)$ independent from the $X_a$ and $Y_a$'s. Since the $(X_a, a \geq 1)$ and $(Y_a, a \geq 1)$ are two independent sequences of i.i.d.\ random variables, it follows that
	\[ \E^q_\zero (T_\sigma) \leq \E(G) \left[ \E^q_\zero (X_1) + \E^q_\zero (Y_1) \right]. \]
	By definition, $X_1$ under $\P^q_\zero$ is an exponential random variable with parameter
	\[ \sum_{v \in V} \Psi_+(q_v) = \sum_{v \in V} \frac{1}{1 + (q_v+1)^{-a}} \geq \frac{n}{2} \]
	so that $\E^q_\zero(X_1) \leq 2/n$. Moreover, $Y_1$ under $\P^q_\zero$ is distributed as $T_\zero$ under $\P^q_\Sigma$ with $\P(\Sigma = v) = p^q_v$ for $v \in V$, so that
	\[ \E^q_\zero(Y_1) = \sum_{v \in V} p^q_v \E^q_v (T_\zero) \leq C \lVert q+1 \rVert^a_\infty \]
	using $\E^q_v(T_\zero) \leq C \lVert q+1 \rVert^a_\infty$. Gathering the previous bounds yields the desired result.
\end{proof}

Thanks to Lemmas~\ref{lemma:Omega-B} and~\ref{lemma:lambda} we now provide a proof of Lemma~\ref{ordre}.

\begin{proof} [Proof of Lemma~\ref{ordre}]
	Let $g: V_0 \to \R$ and $q \in E^N$ given. Recall that $\phi^N_g(q, \, \cdot ,)$ and $\phi_g(Nq, \, \cdot ,)$ are such that
	\[ L^{N,q}_{\mathrm{f}} \left( \phi^N_g(q, \, \cdot ,) \right) = g - \pi^{Nq}[g] = L^{Nq}_{\mathrm{f}} \left( \phi_g(Nq, \, \cdot ,) \right) \]
	and since $L^{N,q}_{\mathrm{f}} = N^{a+1} L^{Nq}_{\mathrm{f}}$, this gives $\phi^N_g(q, \sigma) = N^{-(a+1)} \phi_g(Nq, \sigma)$ by uniqueness. According to~\eqref{eq:bound-phi} and~\eqref{eq:bound-Delta-phi} this gives
	\[ \left \lVert \phi^N_g(q, \,\cdot\,) \right \rVert_\infty \leq \left \lVert g \right \rVert_\infty \frac{\Omega(Nq)}{N^{a+1}} \ \text{ and } \ \left \lVert \Delta^N_{\pm, v} \phi^N_g(q, \, \cdot \,) \right \rVert_\infty \leq \lVert g \rVert_\infty \frac{B_v(Nq)}{N^{a+1}} \]
	and so in order to prove the result, we only have to prove that
	\[ \Omega(Nq) \leq C N^{a+1} \frac{(\log N)^{3/2}}{N} \ \text{ and } \ B_v(Nq) \leq C N^{a+1} \frac{(\log N)^3}{N^2} \]
	for $q \in U^N$. To do so, note that for $q \in U^N$ we have
	\[ n(Nm)^a \leq \frac{1}{\pi^{Nq}(\zero)} \leq 1+n(NM+1)^a \]
	and so our convention makes it possible to write $C N^{-a} \leq \pi^{Nq}(\zero) \leq C N^{-a}$. Since $\ell^{Nq} \geq C N^{-a}$ by Lemma~\ref{lemma:lambda}, for $q \in U^N$ we obtain the desired bounds on $\Omega(Nq)$ and $B_v(Nq)$.
\end{proof}

\begin{proof}[Proof of Proposition~\ref{eqpoi}]
	The proof of Proposition~\ref{eqpoi} now follows readily from Lemmas~\ref{lem:Poisson-1} and~\ref{ordre} since for $a < 1/2$ one readily checks that $\Theta_N \leq C (\log N)^{3/2}/N^{1/2}$ and
	\[ N^{(a+1)/2} B_N + N^a \Omega_N \leq C \frac{(\log N)^{3/2}}{N^{1-a}}. \]
\end{proof}

\section{State space collapse} \label{Sssc}

In this section we prove Proposition~\ref{prop:SSC} through a series of lemmas. \color{black} In view of Lemma~\ref{lem:d-1}, it is enough to prove the result with $d^N$ instead of $d^\infty$, i.e., to prove that
\[ \E \left[ \sup_{0 \leq t \leq T \wedge T^N} d^N(Q^N(t)) \right] \to 0. \]
Starting from the semimartingale decomposition of $d^N \circ Q^N$ and then adding and subtracting $L^N_\mathrm{h}$, we obtain
\begin{multline*}
	d^N(Q^N(t)) = d^N(Q^N(0)) + \int_0^t L^N_{\mathrm{h}} d^N(Q^N(s))\d s\\
	+ \int_0^t (L^N - L^N_{\mathrm{h}}) d^N (Q^N(s), \sigma^N(s))\d s + M^N_{d^N}(t)
\end{multline*}
where, in order to give sense to $L^N d^N$ we consider $d^N(q, \sigma) = d^N(q)$. Taking the supremum and the expectation and using that $Q^N(t) \in U^N$ for all $t \leq T^N$, this leads to
\begin{equation} \label{eq:bound-d^N}
	\E \left[ \sup_{0 \leq t \leq T \wedge T^N} d^N(Q^N(t)) \right] \leq d^N(Q^N(0)) + T \sup_{q \in U^N} L^N_\mathrm{h} d^N(q) + I + II
\end{equation}
with
\[ I = \E \left[ \sup_{0 \leq t \leq T \wedge T^N} \left \lvert \int_0^{t\wedge T^N} (L^N - L^N_{\mathrm{h}}) d^N (Q^N(s), \sigma^N(s))\d s \right \rvert \right] \]
and
\[ II = \E \left[ \sup_{0 \leq t \leq T \wedge T^N} \left \lvert M^N_{d^N}(t) \right \rvert \right]. \]
The first term $d^N(Q^N(0))$ in the right-hand side of~\eqref{eq:bound-d^N} vanishes because $Q^N(0) \to q^0 \in I$ (and because of Lemma~\ref{lem:d-1}). The next three lemmas show that $\sup_{q \in U^N} L^N_\mathrm{h} d^N(q) \to 0$ and that the terms $I$ and $II$ also vanish.

\begin{lem}\label{prop2}
	For $N$ large enough, we have for any $q \in E^N \cap U$
	\[ L^N_{\mathrm{h}}d^N(q) \leq C N^{-a} + C N^{-(1-a)}. \]
\end{lem}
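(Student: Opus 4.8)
The plan is to expand $L^N_{\mathrm h}d^N$ by Taylor's formula to second order, observe that the first-order term is nonpositive up to an $O(N^{-a})$ error, and bound the second-order remainder by $O(N^{a-1})$.

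It is convenient to pass to unscaled variables: write $d^N(q)=D(Nq)$, where for $x\in[0,\infty)^V$ we set $Z(x)=1+\sum_{w\in V}(1+x_w)^a$ and
\[
D(x)=\sum_{v\in V}\lambda^\infty_v\log\lambda^\infty_v\;-\;a\sum_{v\in V}\lambda^\infty_v\log(1+x_v)\;+\;\log Z(x).
\]
Then $D$ is smooth on $(0,\infty)^V$, and since $q_v>m$ for $q\in U$ all the indicators $\ind_{q_v>0}$ equal $1$, so that $L^N_{\mathrm h}d^N(q)=N^{a+1}\sum_v\lambda^N_v\bigl(D(Nq+e^v)-D(Nq)\bigr)+N^{a+1}\sum_v\pi^{Nq}(v)\bigl(D(Nq-e^v)-D(Nq)\bigr)$. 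A direct computation gives $\partial_vD(Nq)=\frac{a}{1+Nq_v}\bigl(\pi^{Nq}(v)-\lambda^\infty_v\bigr)$ and
\[
\partial^2_{v,v}D(x)=\frac{a\lambda^\infty_v}{(1+x_v)^2}+\frac{a(a-1)(1+x_v)^{a-2}}{Z(x)}-\frac{a^2(1+x_v)^{2a-2}}{Z(x)^2}.
\]
Since $q\in U$ forces $Nm<1+Nq_w<1+NM$ for every $w$, one has $1+x_w\asymp N$ and hence $Z(x)\asymp N^a$ for every $x$ with $\norm{x-Nq}{\infty}\le 1$, provided $N$ is large enough; using $a<1$ for the last two terms then yields $\lvert\partial^2_{v,v}D(x)\rvert\le CN^{-2}$ on that set.

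Next I would apply the one-dimensional Taylor formula along each axis, $D(Nq\pm e^v)-D(Nq)=\pm\partial_vD(Nq)+\tfrac12\partial^2_{v,v}D(\xi^{v,\pm})$ with $\xi^{v,\pm}$ on the corresponding unit segment, which splits $L^N_{\mathrm h}d^N(q)=A+B$ with
\[
A=N^{a+1}\sum_{v\in V}\frac{a}{1+Nq_v}\bigl(\lambda^N_v-\pi^{Nq}(v)\bigr)\bigl(\pi^{Nq}(v)-\lambda^\infty_v\bigr),\qquad B=\frac{N^{a+1}}{2}\sum_{v\in V}\Bigl(\lambda^N_v\partial^2_{v,v}D(\xi^{v,+})+\pi^{Nq}(v)\partial^2_{v,v}D(\xi^{v,-})\Bigr).
\]
For $A$, I would use $\lambda^N_v-\pi^{Nq}(v)=(\lambda^\infty_v-\pi^{Nq}(v))-N^{-a}\gamma_v$ to rewrite the $v$-th numerator as $-(\lambda^\infty_v-\pi^{Nq}(v))^2-N^{-a}\gamma_v(\pi^{Nq}(v)-\lambda^\infty_v)$, then bound the cross term by $2|ab|\le a^2+b^2$ to obtain $-(\lambda^\infty_v-\pi^{Nq}(v))^2-N^{-a}\gamma_v(\pi^{Nq}(v)-\lambda^\infty_v)\le\tfrac12 N^{-2a}\gamma_v^2$; since $1+Nq_v>Nm$, this gives $A\le\tfrac{a}{2Nm}N^{a+1}N^{-2a}\sum_v\gamma_v^2=CN^{-a}$. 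For $B$, the bound $\lvert\partial^2_{v,v}D\rvert\le CN^{-2}$ together with $\lambda^N_v,\pi^{Nq}(v)\le C$ gives $\lvert B\rvert\le CN^{a+1}N^{-2}=CN^{-(1-a)}$. Adding the two estimates yields $L^N_{\mathrm h}d^N(q)=A+B\le CN^{-a}+CN^{-(1-a)}$, as claimed.

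The computation is elementary; the one point requiring care is the cancellation that produces the nonpositive term $-(\lambda^\infty_v-\pi^{Nq}(v))^2$ in $A$ — this is precisely the reason $d^N$ plays the role of a Lyapunov function for the state space collapse — together with the bookkeeping of the magnitudes of $\partial_vD$ and $\partial^2_{v,v}D$ on $U$ (of orders $N^{-1}$ and $N^{-2}$ respectively), which relies on $q\in U$ keeping $1+Nq_v$ comparable to $N$ and on the assumption $a<1$.
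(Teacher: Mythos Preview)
Your proof is correct and follows essentially the same route as the paper: Taylor expand $d^N$ to second order, split $L^N_{\mathrm h}d^N$ into a first-order drift term $A$ and a second-order remainder $B$, use the key identity $\partial_v d^N(q)=-\tfrac{a(\lambda^\infty_v-\pi^{Nq}(v))}{q_v+1/N}$ to see that $A$ is a negative quadratic in $\delta^N_v=\lambda^\infty_v-\pi^{Nq}(v)$ plus an $O(1)\cdot\delta^N_v$ cross term, and complete the square. The only cosmetic differences are that you work in unscaled variables $D(x)=d^N(x/N)$ and you complete the square term-by-term via $-\delta_v^2+N^{-a}\gamma_v\delta_v\le\tfrac12 N^{-2a}\gamma_v^2$, whereas the paper aggregates to $A\le -c_1N^a\|\delta^N\|_2^2+c_2\|\delta^N\|_2$ and then maximizes this quadratic in $\|\delta^N\|_2$; both are the same estimate.
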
 
%

\begin{proof}
	Let $q \in U^N$ and for each $v \in V$, let $\zeta^v_{\pm} = (\zeta^v_{\pm, w}, w \in V)$ such that
	\[ d^N \left( q \pm \frac{e^v}{N} \right) = d^N(q) \pm \frac{1}{N} \partial_v d^N(q) + \frac{1}{2N^2} \partial^2_{v,v} d^N(\zeta^v_\pm). \]
	Note that $\zeta^v_{\pm, w} = q_w$ if $w \neq v$ and $\lvert \zeta^v_{\pm, v} - q_v \rvert \leq 1/N$. Then, recalling that $\Delta^N_{\pm, v} d^N(q) = d^N \left( q \pm e^v / N \right) - d^N(q)$, we obtain
	\begin{align*}
		L^N_{\mathrm{h}}d^N(q) & = N^{a+1} \sum_{v\in V} \lambda^{N}_v \Delta^N_{+,v} d^N(q) + N^{a+1} \sum_{v\in V} \pi^{Nq}(v) \Delta^N_{-,v} d^N(q) \\
	& = N^{a+1} \sum_{v\in V} \left(\lambda^\infty_v- N^{-a}\gamma_v\right) \left( \frac{1}{N}\partial_v d^N(q) + \frac{1}{2N^2} \partial^2_{v,v} d^N (\zeta^v_+) \right)\\
	& \hspace{5mm} + N^{a+1} \sum_{v\in V} \pi^{Nq}(v) \left( -\frac{1}{N}\partial_v d^N(q) + \frac{1}{2N^2} \partial^2_{v,v} d^N (\zeta^v_-) \right)\\
	& = A + B
	\end{align*}
	with
	\[ A = N^a \sum_{v\in V} \left( \lambda^\infty_v - \pi^{Nq}(v) \right) \partial_v d^N(q) - \sum_{v\in V} \gamma_v \partial_v d^N(q) \]
	and
	\[ B = \frac{1}{2 N^{1-a}} \sum_{v\in V} \left( \left( \lambda^\infty_v - N^{-a} \gamma_v \right) \partial^2_{v,v} d^N(\zeta^v_+) + \pi^{Nq}(v) \partial^2_{v,v} d^N (\zeta^v_-) \right). \]
	We now show that $A \leq C N^{-a}$ and $B \leq C N^{-(1-a)}$, which will give the result. Let us start with controlling $A$. Let in the sequel $\delta^N_v(q) = \lambda^\infty_v - \pi^{Nq}(v)$. Then it may be checked through elementary algebra that
	\[ \partial_v d^N(q) = - \frac{a \delta^N_v(q)}{q_v + \frac{1}{N}} \]
	which leads to the relation
	\[ A = - a N^a \sum_{v \in V} \frac{\delta^N_v(q)^2}{q_v + \frac{1}{N}} - a \sum_{v \in V} \frac{\gamma_v \delta^N_v(q)}{q_v + \frac{1}{N}}. \]
	Since $q \in U^N$, and in particular $m \leq q_v \leq M$ for every $v$, we obtain by using the equivalence of the $L_1$ and $L_2$ norms that
	\[ A \leq - c_1 N^a \norm{\delta^N(q)}{2}^2 + c_2 \norm{\delta^N(q)}{2} \]
	for some positive constants $c_1$ and $c_2$ that only depend on $n$, $m$, $M$ and $\gamma$. It is readily checked that the supremum of the function $x \mapsto c_2 x - c_1 N^a x^2$ is equal to $c^2_2 / (4 c_1 N^a)$, which gives $A \leq C N^{-a}$ as desired.
	
	Let us now control $B$. Computing the second derivative of $d^N$ gives
	\[ \partial^2_{v,v} d^N(q) = \frac{a \left( \delta^N_v(q) + a \pi^{Nq}(v) (1 - \pi^{Nq}(v)) \right)}{(q_v + \frac{1}{N})^2}. \]
	In particular, since $q \in U^N$ and $\lVert \zeta^v_\pm - q \rVert_\infty \leq 1/N$, we have
	\[ \left \lvert \partial^2_{v, v} d^N \left( \zeta^N_{\pm} \right) \right \rvert \leq \frac{1}{m^2} \]
	and so
	\[ B \leq \frac{1}{2 m N^{-(1-a)}} \left( 2 + N^{-a} \tmpsum(\gamma) \right) \]
	which gives $B \leq C N^{-(1-a)}$ for $N$ large enough, as desired.
\end{proof}

\color{black}


\begin{lem}\label{eqpoissc}
	We have
	\[ \E \left[ \sup_{0 \leq t \leq T \wedge T^N} \left \lvert \int_0^{t\wedge T^N} (L^N - L^N_{\mathrm{h}}) d^N (Q^N(s), \sigma^N(s))\d s \right \rvert \right] \to 0. \]
\end{lem}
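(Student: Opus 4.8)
The plan is to compute $(L^N-L^N_{\mathrm{h}})d^N$ explicitly, recognize it as a finite sum of quantities of exactly the type controlled by the homogenization estimate of Proposition~\ref{eqpoi}, and then bound the relevant norms of the ($N$-dependent) test functions that appear. Since $d^N$, extended to $E^N\times V_0$ by $d^N(q,\sigma)=d^N(q)$, does not depend on $\sigma$, the fast part $L^{N,q}_{\mathrm{f}}$ annihilates it, so $L^N d^N(q,\sigma)=L^{N,\sigma}_{\mathrm{s}}d^N(q)$; comparing with the definition~\eqref{eq:Lh} of $L^N_{\mathrm{h}}$, which differs only in that the instantaneous rate $\sigma_v$ is replaced by its average $\pi^{Nq}(v)$, the arrival parts (both carrying $\lambda^N$) cancel and one is left with
\[ (L^N-L^N_{\mathrm{h}})d^N(q,\sigma)=\sum_{v\in V}\bigl(\sigma_v-\pi^{Nq}(v)\bigr)\ind_{q_v>0}\,f^N_v(q),\qquad f^N_v(q):=N^{a+1}\Delta^N_{-,v}d^N(q). \]
On the integration interval $[0,t\wedge T^N]\subseteq[0,T^N]$ we have $Q^N(s)\in U^N$ by Lemma~\ref{lemma:m-M}, hence $Q^N_v(s)>m>1/N$ for $N$ large and the indicators equal $1$; therefore the quantity under study is at most
\[ \sum_{v\in V}\E\left[\sup_{0\le t\le T\wedge T^N}\left\lvert\int_0^t\left(\sigma^N_v(s)-\pi^{NQ^N(s)}(v)\right)f^N_v(Q^N(s))\,\d s\right\rvert\right], \]
and it suffices to show that each of the finitely many summands tends to $0$. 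Note that a naive estimate is useless: $f^N_v$ is of order $N^a$ (see below), so without exploiting the oscillation of $\sigma^N_v-\pi^{NQ^N}(v)$ around $0$ the integral is only $O(N^a)$, which does not vanish — this is precisely why the averaging estimate is needed.

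Next I would verify that $f^N_v$ satisfies the hypotheses of Proposition~\ref{eqpoi} and control its $C^1$ norm on $U$. The function $d^N$ is smooth on a neighbourhood of $\overline{U}$ for $N$ large, since on $U$ the quantities $q_v+\tfrac1N$ and $\pi^{Nq}(v)$ stay bounded away from $0$; hence $f^N_v$ is well defined and of class $C^1$ (indeed $C^\infty$) on $U$. Using the identity $\partial_v d^N(q)=-a\delta^N_v(q)/(q_v+\tfrac1N)$ from the proof of Lemma~\ref{prop2}, with $\delta^N_v(q)=\lambda^\infty_v-\pi^{Nq}(v)\in[-2,2]$, a mean-value estimate along the $q_v$-direction gives $\lVert f^N_v\rVert_{\infty,U}\le CN^a$. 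Likewise, $\lvert\partial_w f^N_v(q)\rvert=N^a\lvert\partial^2_{v,w}d^N(\xi)\rvert$ for some $\xi$ on the segment between $q$ and $q-e^v/N$; the second-order derivatives of $d^N$ are bounded on $U$ by a constant, as follows from the explicit formula for $\partial^2_{v,v}d^N$ recalled in the proof of Lemma~\ref{prop2} together with the bound $\lvert\partial_w\pi^{Nq}(v)\rvert\le C$ on $U$, itself an immediate consequence of the estimates $\lvert\partial_v\pi^q(\sigma)\rvert\le a\pi^q(\zero)(q_v+1)^{a-1}$ established in the proof of Lemma~\ref{lemma:Omega-B} and of $\pi^{Nq}(\zero)\le CN^{-a}$. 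Hence $\max_w\lVert\partial_w f^N_v\rVert_{\infty,U}\le CN^a$ as well.

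Finally I would feed $\lVert f^N_v\rVert_{\infty,U}\le CN^a$ and $\max_w\lVert\partial_w f^N_v\rVert_{\infty,U}\le CN^a$ into Proposition~\ref{eqpoi}, which bounds the $v$-th summand above by
\[ CN^a\,\frac{(\log N)^{3/2}}{N^{1/2}}+CN^a\,\frac{(\log N)^{3/2}}{N^{1-a}}=C(\log N)^{3/2}\left(N^{a-1/2}+N^{2a-1}\right), \]
which tends to $0$ because $a<1/2$ makes both exponents negative; summing over $v\in V$ concludes. The only genuinely delicate point is the legitimacy of applying Proposition~\ref{eqpoi} to the $N$-dependent functions $f^N_v$: this is fine because the constant $C$ there is uniform over the test function, so the entire $N$-dependence is carried by $\lVert f^N_v\rVert_{\infty,U}$ and $\lVert\partial_w f^N_v\rVert_{\infty,U}$, both of which are $O(N^a)$, and the estimate closes exactly under the standing hypothesis $a<1/2$. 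Everything else is routine bookkeeping with estimates already available from the earlier sections.
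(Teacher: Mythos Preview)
Your proof is correct and follows essentially the same route as the paper: compute $(L^N-L^N_{\mathrm h})d^N(q,\sigma)=N^{a+1}\sum_v(\sigma_v-\pi^{Nq}(v))\ind_{q_v>0}\Delta^N_{-,v}d^N(q)$, apply Proposition~\ref{eqpoi} term by term with $f^N_v=N^{a+1}\Delta^N_{-,v}d^N$, and use the bounds $\lVert f^N_v\rVert_{\infty,U},\,\max_w\lVert\partial_w f^N_v\rVert_{\infty,U}\le CN^a$ to close under $a<1/2$. Your write-up is in fact a bit more careful than the paper's, which simply asserts that $\lvert\Delta^N_{-,v}d^N\rvert,\lvert\partial_w\Delta^N_{-,v}d^N\rvert\le C/N$ on $U^N$ and does not explicitly dispose of the indicator $\ind_{q_v>0}$.
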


\begin{proof}
	Since
	\begin{align*}
		(L^N - L^N_{\mathrm{h}}) d^N (q, \sigma) & = (L^{N,\sigma}_{\mathrm{s}} - L^N_{\mathrm{h}}) d^N (q)\\
		& = N^{a+1} \sum_{v \in V} \left( \sigma_v - \pi^{Nq}(v) \right) \ind_{q_v>0} \Delta^N_{-,v} d^N (q)
	\end{align*}
	Proposition~\ref{eqpoi} gives
	\begin{align*}
		\E \Bigg[ \sup_{0 \leq t \leq T \wedge T^N} \Bigg \lvert \int_0^{t\wedge T^N} (L^N & - L^N_{\mathrm{h}}) d^N (Q^N(s), \sigma^N(s))\d s \Bigg \rvert \Bigg]\\
		& \leq C N^{a+1} \max_{v \in V} \norm{\Delta^N_{-,v} d^N}{\infty, U}\dfrac{(\log N)^{3/2}}{N^{1/2}}\\
		& \hspace{10mm} + C N^{a+1} \max_{v,w} \norm{\partial_{w} \Delta^N_{-,v} d^N}{\infty, U} \dfrac{(\log N)^{3/2}}{N^{1-a}}.
	\end{align*}
	For $q \in U^N$ and $w \in V$ one can check that $\left \lvert \Delta^N_{-,v} d^N(q) \right \rvert, \left \lvert \partial_{w} \Delta^N_{-,v} d^N(q) \right \rvert \leq \frac{C}{N}$
	so that
	\begin{multline*}
		\E \left[ \sup_{0 \leq t \leq T \wedge T^N} \left \lvert \int_0^{t\wedge T^N} (L^N - L^N_{\mathrm{h}}) d^N (Q^N(s), \sigma^N(s))\d s \right \rvert \right] \leq C \dfrac{(\log N)^{3/2}}{N^{1/2-a}}.
	\end{multline*}
	Thus for $a < 1/2$ this bound indeed vanishes, which proves the result.
\end{proof}

\begin{lem}\label{lem:d}
	We have
	\[ \E \left[ \sup_{0 \leq t \leq T \wedge T^N} \left \lvert M^N_{d^N}(t) \right \rvert \right] \leq \frac{C}{N^{(1-a)/2}}. \]
\end{lem}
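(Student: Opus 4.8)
The plan is to bound the expected supremum of the martingale $M^N_{d^N}$ via Doob's $L^2$-inequality, as was done for $M^N_{V^N_v}$ in Lemma~\ref{lem:M}. First I would write
\[ \E \left[ \sup_{0 \leq t \leq T \wedge T^N} \left( M^N_{d^N}(t) \right)^2 \right] \leq 4 \E \left[ \left\langle M^N_{d^N} \right\rangle (T \wedge T^N) \right] = 4 \E \left[ \int_0^{T \wedge T^N} \Gamma^N d^N (Q^N(s), \sigma^N(s)) \d s \right], \]
so everything reduces to estimating $\Gamma^N d^N$ along the trajectory while $t \leq T^N$, i.e.\ while $Q^N \in U^N$ by Lemma~\ref{lemma:m-M}.

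Next I would expand $\Gamma^N d^N(q, \sigma)$ using formula~\eqref{eq:Gamma}, recalling that $d^N$ depends only on $q$ (so it is the function $d^N(q,\sigma) = d^N(q)$). The expression has four groups of terms. In the first two (the arrival and departure parts) the squared increments are $\left( \Delta^N_{\pm, v} d^N(q) \right)^2$, and since we already know from the proof of Lemma~\ref{eqpoissc} that $\lvert \Delta^N_{-,v} d^N(q) \rvert \leq C/N$ on $U^N$ (and the same bound holds for $\Delta^N_{+,v} d^N$ by an identical Taylor-expansion argument), each such term contributes $N^{a+1} \cdot (C/N)^2 = C N^{a-1}$. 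In the last two groups the factor is $\left( d^N(q,0) - d^N(q,v) \right)^2 = 0$ because $d^N$ does not depend on $\sigma$; hence these terms vanish identically. Integrating over $[0, T \wedge T^N]$ and using $T \wedge T^N \leq T$, this gives
\[ \E \left[ \int_0^{T \wedge T^N} \Gamma^N d^N (Q^N(s), \sigma^N(s)) \d s \right] \leq C N^{a-1}. \]

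Finally I would combine the two displays: Doob gives $\E \left[ \sup_{t \leq T \wedge T^N} (M^N_{d^N}(t))^2 \right] \leq C N^{a-1}$, and then Cauchy--Schwarz (Jensen) yields
\[ \E \left[ \sup_{0 \leq t \leq T \wedge T^N} \lvert M^N_{d^N}(t) \rvert \right] \leq \left( \E \left[ \sup_{0 \leq t \leq T \wedge T^N} (M^N_{d^N}(t))^2 \right] \right)^{1/2} \leq \frac{C}{N^{(1-a)/2}}, \]
which is the claimed bound. I expect no real obstacle here; the only point requiring a line of justification is the bound $\lvert \Delta^N_{\pm, v} d^N(q) \rvert \leq C/N$ on $U^N$, which follows exactly as in Lemma~\ref{eqpoissc} from $\partial_v d^N(q) = -a\delta^N_v(q)/(q_v + 1/N)$, the uniform bound $\lvert \partial^2_{v,v} d^N \rvert \leq 1/m^2$, and the fact that on $U^N$ we have $q_v > m$ and $\lvert \delta^N_v(q) \rvert$ bounded (both $\lambda^\infty_v$ and $\pi^{Nq}(v)$ lie in $[0,1]$); note the $\sigma$-dependent terms of the carré du champ dropping out is what makes this martingale genuinely small, reflecting that $d^N$ is a function of the slow variable only.
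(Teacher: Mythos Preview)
Your proof is correct and follows essentially the same approach as the paper: Doob's $L^2$-inequality reduces to bounding $\Gamma^N d^N$ on $U^N$, the $\sigma$-dependent terms of~\eqref{eq:Gamma} vanish because $d^N$ is a function of $q$ only, and the remaining terms are controlled via the Taylor expansion of $d^N$ (the paper points to Lemma~\ref{prop2} for this, you point to Lemma~\ref{eqpoissc}, but it is the same estimate $\lvert \Delta^N_{\pm,v} d^N \rvert \leq C/N$). The only difference is that you spell out explicitly why the last two groups of~\eqref{eq:Gamma} disappear, which the paper leaves implicit.
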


\begin{proof}
	Proceeding as in the proof of Lemma~\ref{lem:M} we obtain
	\begin{multline*}
		\E \left[ \sup_{0 \leq t \leq T \wedge T^N} M^N_{d^N}(t)^2 \right] \leq 4 N^{a+1}\cesp{}{\int_0^{T\wedge T^N} \sum_{v \in V} ( \Delta^N_{+,v} d^N (Q^N(s) )^2\d s}\\
		+ 4 N^{a+1} \cesp{}{\int_0^{T\wedge T^N} \sum_{v \in V}^n (\Delta^N_{-,v} d^N (Q^N(s))^2\d s}.
	\end{multline*} 
	The result then follows from the same Taylor expansion as in the proof of Lemma~\ref{prop2}.
\end{proof}

\color{black}

\section{Proof of main result} \label{Smain}
To prove Theorem \ref{main}, we will establish its equivalent for the stopped process $Q^N( \, \cdot \,\wedge T^N)$ using Gronwall's lemma. We then transfer the result on the stopped process to $Q^N$ using Lemma~\ref{lemma:m-M}.

\subsection{First step: $\tmpsum \circ Q^N(\, \cdot \, \wedge T^N) \Rightarrow S$}

The first step is to prove that $\tmpsum \circ Q^N(\, \cdot \, \wedge T^N) \Rightarrow S$ uniformly on $[0,T]$, which we do now. Starting from the definition of $L^{N, \sigma}_{\mathrm{s}}$ and using $\tmpsum(\lambda^N) = \textcolor{black}{1- N^{-a} \tmpsum(\gamma)}$ by~\eqref{eq:near-criticality} and $\sum_{v \in V} \sigma_v = 1 - \sigma_0$ we obtain
\[ L^{N, \sigma}_{\mathrm{s}} \tmpsum(q) = N^a\textcolor{black}{\tmpsum(\lambda^N)} - N^a \sum_{v \in V} \sigma_v \ind_{q_v > 0} = N^a \sigma_\zero + N^a \sum_{v \in V} \sigma_v \ind_{q_v = 0}\textcolor{black}{-\tmpsum(\gamma)}. \]
The semimartingale decomposition of $\tmpsum \circ Q^N$ and the fact that $$S(t) = S(0) + \mu \int_0^t S(s)^{-a} \d s\textcolor{black}{-\tmpsum(\gamma)t}$$ by definition of $S$ then leads to
\begin{multline} \label{eq:\tmpsum}
	\tmpsum(Q^N(t)) - S(t) = \tmpsum(Q^N(0)) - S(0) + \int_0^t \left( N^a \sigma^N_\zero - \frac{\mu}{S(s)^a} \right) \d s\\
+ \sum_{v \in V} \int_0^t \sigma^N_v(s)\ind_{Q^N_v(s)=0}\d s + M^N_{\tmpsum}(t).
\end{multline}
Define
\[ \varepsilon^N(t) = \tmpsum(Q^N(0)) - S(0) + \eta^N(t) + e^N(t) + h^N(t) + M^N_{\tmpsum}(t) \]
where
\[ \eta^N(t) = \int_0^{t \wedge T^N} \left ( \frac{1}{N^{-a} + \norm{Q^N(s) + 1/N}a^a} - \frac{1}{\norm{Q^N(s)}a^a} \right ) \d s, \]
\[ e^N(t) = \int_0^{t \wedge T^N} \left( \frac{1}{\norm{Q^N(s)}a^a} - \frac{\mu}{\tmpsum(Q^N(s))^a} \right) \d s \]
and
\[ h^N(t) = N^a \int_0^{t \wedge T^N} \left( \sigma^N_\zero(s)-\pi^{NQ^N(s)}(\zero) \right) \d s. \]
Since $Q^N_v(s) > 0$ for $t < T^N$, starting from~\eqref{eq:\tmpsum} and plugging in the above expressions, we obtain
\[ \tmpsum(Q^N(t \wedge T^N)) - S(t) = \varepsilon^N(t) + \mu \int_0^t \left( \frac{1}{\tmpsum(Q^N(s))^a} - \frac{1}{S(s)^a} \right) \d s. \]
Since $x \in [m,M] \mapsto x^{-a}$ is Lipschitz and all queue lengths are in $[m,M]$ before time $T^N$, we finally obtain
\[ \left \lvert \tmpsum(Q^N(t \wedge T^N)) - S(t) \right \rvert \leq \left \lvert \varepsilon^N(t) \right \rvert + C \int_0^t \left \lvert \tmpsum(Q^N(s \wedge T^N)) - S(s) \right \rvert \d s \]
and Gronwall's lemma implies
\begin{multline*}
	\sup_{0 \leq t \leq T} \left \lvert \tmpsum(Q^N(t \wedge T^N)) - S(t) \right \rvert\\
	\leq \left ( \left \lvert \tmpsum(Q^N(0)) - S(0) \right \rvert + \bar \eta^N + \bar e^N + \bar h^N + \sup_{0 \leq t \leq T \wedge T^N} \left \lvert M^N_{\tmpsum}(t) \right \rvert \right ) e^{CT}
\end{multline*}
with
\[ \bar \eta^N = \int_0^{T \wedge T^N} \left \lvert \frac{1}{N^{-a} + \norm{Q^N(s)+1/N}a^a} - \frac{1}{\norm{Q^N(s)}a^a} \right \rvert \d s, \]
\[ \bar e^N = \int_0^{T \wedge T^N} \left \lvert \frac{1}{\norm{Q^N(s)}a^a} - \frac{\mu}{\tmpsum(Q^N(s))^a} \right \rvert \d s \]
and
\[ \bar h^N = N^a \sup_{0 \leq t \leq T \wedge T^N} \left \lvert \int_0^t \left( \sigma^N_\zero(s) - \pi^{NQ^N(s)}(\zero) \right) \d s \right \rvert. \]
By assumption we have $\tmpsum(Q^N(0)) \to S(0)$ and so in order to prove the desired result $\tmpsum \circ Q^N(\, \cdot \, \wedge T^N) \Rightarrow S$ on $[0,T]$, we only have to prove that $\bar \eta^N, \bar e^N, \bar h^N$ and the martingale term vanish. The fact that $\bar \eta^N \Rightarrow 0$ comes directly from the fact that $Q^N(s) \in U$ for $s \leq T \wedge T^N$. The martingale term is handled with the exact same arguments as the previous martingale terms in Lemmas~\ref{lem:M} and~\ref{lem:d}, the proof is omitted. The next two lemmas show that the last two terms $\bar \epsilon^N$ and $\bar h^N$ also vanish.

\begin{lem}
	We have $\bar e^N \Rightarrow 0$.
\end{lem}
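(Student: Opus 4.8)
The plan is to show that the integrand defining $\bar e^N$ is, uniformly over $U$, a smooth function of $Q^N(s)$ which vanishes on the invariant manifold $I$, so that $\bar e^N \Rightarrow 0$ follows at once from the state space collapse estimate of Proposition~\ref{prop:SSC}. Throughout I would use that $Q^N(s) \in U^N$ for all $s \leq T \wedge T^N$ by Lemma~\ref{lemma:m-M}.

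First I would record the elementary identity that for $x \in I$ one has, using $x_v^a = (\lambda^\infty_v/\mu)\tmpsum(x)^a$ and $\sum_v \lambda^\infty_v = 1$,
\[ \norm{x}a^a = \sum_{v \in V} x_v^a = \frac{\tmpsum(x)^a}{\mu}, \qquad \text{hence} \qquad \frac{1}{\norm{x}a^a} = \frac{\mu}{\tmpsum(x)^a}; \]
thus the integrand vanishes on $I$. Writing it as
\[ \left\lvert \frac{1}{\norm{q}a^a} - \frac{\mu}{\tmpsum(q)^a} \right\rvert = \frac{\left\lvert \tmpsum(q)^a - \mu \norm{q}a^a \right\rvert}{\norm{q}a^a\,\tmpsum(q)^a} \]
and noting that on $U$ the denominator is bounded below by a positive constant, it suffices to bound $\left\lvert \tmpsum(q)^a - \mu\norm{q}a^a \right\rvert$. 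For this I would invoke Lemma~\ref{lem:diff-I}: for $q \in U$ and every $v$, $\left\lvert q_v - (\lambda^\infty_v/\mu)^{1/a}\tmpsum(q) \right\rvert \leq C[d^\infty(q)]^{1/2}$. Since all of $q_v$ and $(\lambda^\infty_v/\mu)^{1/a}\tmpsum(q)$ stay in a fixed compact subinterval of $(0,\infty)$ on which $t \mapsto t^a$ is Lipschitz, this upgrades to $\left\lvert q_v^a - (\lambda^\infty_v/\mu)\tmpsum(q)^a \right\rvert \leq C[d^\infty(q)]^{1/2}$, and summing over $v \in V$ (using once more $\sum_v \lambda^\infty_v = 1$) gives $\left\lvert \mu\norm{q}a^a - \tmpsum(q)^a \right\rvert \leq C[d^\infty(q)]^{1/2}$. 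Altogether,
\[ \left\lvert \frac{1}{\norm{q}a^a} - \frac{\mu}{\tmpsum(q)^a} \right\rvert \leq C\,[d^\infty(q)]^{1/2}, \qquad q \in U. \]

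Applying this pointwise along the trajectory up to time $T \wedge T^N$ and then using the Cauchy--Schwarz inequality, I would obtain
\[ \E[\bar e^N] \leq C\,\E\left[ \int_0^{T \wedge T^N} [d^\infty(Q^N(s))]^{1/2}\,\d s \right] \leq C \left( \E\left[ \sup_{0 \leq t \leq T \wedge T^N} d^\infty(Q^N(t)) \right] \right)^{1/2}, \]
which tends to $0$ by Proposition~\ref{prop:SSC}. Since $\bar e^N \geq 0$, this shows $\bar e^N \to 0$ in $L^1$, hence in probability, and since the limit is deterministic, $\bar e^N \Rightarrow 0$. I do not expect any genuine obstacle here: the whole argument reduces to the elementary first step --- checking that the integrand vanishes on $I$ and converting this, via Lemma~\ref{lem:diff-I}, into a bound by $[d^\infty]^{1/2}$ --- after which the conclusion is immediate from the already-established state space collapse of Proposition~\ref{prop:SSC}.
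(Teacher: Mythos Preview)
Your proposal is correct and follows essentially the same route as the paper: bound the integrand on $U$ by a power of $d^\infty$ via Lemma~\ref{lem:diff-I} (using that $t\mapsto t^a$ is Lipschitz on the relevant compact), then conclude from Proposition~\ref{prop:SSC}. The paper records the bound with exponent $a/2$ rather than your $1/2$, and is terser about passing from the integral to the supremum, but these are cosmetic differences.
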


\begin{proof}
	\color{black}
	Since the process is stopped at $T^N$ and so all coordinates considered are bounded away from $0$, all the functions considered are Lipschitz and so according to Lemma~\ref{lem:diff-I} we have
	\[ \left \lvert Q^N_v(s)^a - \frac{\lambda^\infty_v}{\mu} s(Q^N(s)) \right \rvert \leq C d^\infty(Q^N(s))^{a/2}. \]
	Using the triangular inequality and the fact that $s(\lambda^\infty) = 1$, we thus obtain
	\[ \left \lvert \left \lVert Q^N(s) \right \rVert^a_a - \frac{1}{\mu} s(Q^N(s)) \right \rvert \leq C d^\infty(Q^N(s))^{a/2}. \]
	The convergence $\bar e^N \Rightarrow 0$ follows therefore readily from Proposition~\ref{prop:SSC} which implies that $d^\infty(Q^N(s)) \Rightarrow 0$ uniformly in $s \leq T \wedge T^N$.
\end{proof}

\begin{lem}
	We have $\E(\bar h^N) \to 0$.
\end{lem}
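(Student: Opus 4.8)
The plan is to reduce this directly to Proposition~\ref{eqpoi} applied with the constant function $f \equiv 1$. First I would use the convention $\sigma_\zero = 1 - \sum_{v \in V} \sigma_v$ together with $\pi^{NQ^N(s)}(\zero) = 1 - \sum_{v \in V} \pi^{NQ^N(s)}(v)$ to write
\[ \sigma^N_\zero(s) - \pi^{NQ^N(s)}(\zero) = - \sum_{v \in V} \left( \sigma^N_v(s) - \pi^{NQ^N(s)}(v) \right), \]
so that for every $t$,
\[ \left \lvert \int_0^t \left( \sigma^N_\zero(s) - \pi^{NQ^N(s)}(\zero) \right) \d s \right \rvert \leq \sum_{v \in V} \left \lvert \int_0^t \left( \sigma^N_v(s) - \pi^{NQ^N(s)}(v) \right) \d s \right \rvert. \]

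Next I would take the supremum over $t \leq T \wedge T^N$ and then the expectation, and apply Proposition~\ref{eqpoi} to each of the $n$ terms on the right with $f \equiv 1$, which is continuously differentiable on $U$ with $\norm{f}{\infty, U} = 1$ and $\partial_v f \equiv 0$. This yields
\[ \E \left[ \sup_{0 \leq t \leq T \wedge T^N} \left \lvert \int_0^t \left( \sigma^N_\zero(s) - \pi^{NQ^N(s)}(\zero) \right) \d s \right \rvert \right] \leq C \frac{(\log N)^{3/2}}{N^{1/2}}, \]
the constant $C$ absorbing the factor $n$. Multiplying by $N^a$ gives $\E(\bar h^N) \leq C (\log N)^{3/2} / N^{1/2 - a}$, which tends to $0$ as $N \to \infty$ because $a < 1/2$; this is the claim.

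There is essentially no obstacle here: all the work has already been done in Proposition~\ref{eqpoi}. The only point worth a line of care is that Proposition~\ref{eqpoi} is phrased for indices $v \in V$ rather than for the empty schedule $\zero$, which is why I pass through the identity above instead of invoking it with $v = \zero$ directly. Alternatively one could observe that the constants $\Omega_N$ and $B_N$ from Section~\ref{homo} are defined as suprema over all $g$ with $\norm{g}{\infty} \leq 1$, hence in particular cover $g(\sigma) = \sigma_\zero$, so that the argument of Lemma~\ref{lem:Poisson-1} (and then Lemma~\ref{ordre}) applies verbatim with $v = \zero$ and gives the same bound; either route works and neither requires any new estimate.
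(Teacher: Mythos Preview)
Your proof is correct and follows essentially the same route as the paper: decompose $\sigma_\zero - \pi^{Nq}(\zero)$ as (minus) the sum over $v \in V$ of $\sigma_v - \pi^{Nq}(v)$, bound $\bar h^N$ by the sum of the corresponding suprema, and apply Proposition~\ref{eqpoi} with a constant $f$ to get $\E(\bar h^N)\leq C(\log N)^{3/2}/N^{1/2-a}\to 0$ for $a<1/2$. Your write-up is in fact cleaner than the paper's on two small points (you state the identity for $\sigma_\zero$ correctly and take $f\equiv 1$ rather than the paper's ``$f(q)=q$'', which is a slip).
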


\begin{proof}
	Since $\sigma_\zero = \sum_{v \in V} \sigma_v$ and $\pi^q(\zero) = \sum_{v \in V} \pi^q(v)$ we have
	\[ \bar h^N \leq N^a \sum_{v \in V} \sup_{0 \leq t \leq T \wedge T^N} \left \lvert \int_0^t \left( \sigma^N_v(s)-\pi^{NQ^N(s)}(v) \right) \d s \right \rvert \]
	and so Proposition~\ref{lem:Poisson-1} with $f(q)=q$ implies that
	\[ \E(\bar h^N) \leq C \frac{(\log N)^{3/2}}{N^{1/2-a}}. \]
	As $a < 1/2$ we have the result.
\end{proof}

\subsection{Second step: proof of Theorem~\ref{main}}

\color{black}We now conclude the proof of Theorem~\ref{main}, so we have to control $Q^N_v(t) - q_v(t)$. The idea is to combine the convergence $s \circ Q^N (\cdot \wedge T^N) \Rightarrow s \circ q = S$ of the previous step, together with the state space collapse property $d^\infty \circ Q^N \Rightarrow 0$ of Proposition~\ref{prop:SSC}. Since $q(t) \in I$, we have $q_v(t) = (\lambda^\infty_v / \mu)^{1/a} S$ and so this leads us to write
\begin{multline*}
	\sup_{0 \leq t \leq T \wedge T^N} \left \lvert Q^N_v(t) - q_v(t) \right \rvert \leq \sup_{0 \leq t \leq T \wedge T^N} \left \lvert Q^N_v(t) - \left( \frac{\lambda^\infty_v}{\mu} \right)^{1/a} s(Q^N(t)) \right \rvert +\\
	\sup_{0 \leq t \leq T \wedge T^N} \left \lvert \tmpsum(Q^N(t)) - S(t) \right \rvert.
\end{multline*}
The second term vanishes by the first step, and so the first term vanishes as a consequence of the state space collapse property (combine Proposition~\ref{prop:SSC} and Lemma~\ref{lem:diff-I}). Thus we have proved that $Q^N(\cdot \wedge T^N) \Rightarrow q$.
\color{black}

Let us now remove the localization and prove that $Q^N \Rightarrow q$. In order to do so, it is enough to show that $\P(T^N \geq T) \to 1$. By definition of $T^N$, we have
\[ \left \lVert Q^N(T^N) - q(T^N) \right \rVert_1 \geq \frac{m}{2}. \]
Since $T^N \wedge T = T^N$ in the event $\{T^N \leq T\}$, this entails
\[ \P \left( T^N \leq T \right) \leq \P \left( \left \lVert Q^N(T^N \wedge T) - q(T^N \wedge T) \right \rVert_1 \geq \frac{m}{2} \right). \]
Since we have proved that $Q^N(\, \cdot \, \wedge T^N) \Rightarrow q$ uniformly on $[0,T]$, the previous probability vanishes. This concludes the proof of Theorem~\ref{main}.

\color{black}

\section{Extensions and directions for future research} \label{sec:openings}

\color{black}

\subsection{Beyond $a < \frac{1}{2}$} \label{sub:1/2}

Proposition~\ref{eqpoi} shows that the averaging approximation~\eqref{eq:SAP} holds for $a < 1$. This is in line with Lemma~\ref{lemma:lambda} which shows that the mixing time of the fast process is of the order of $N^a$: since the typical time scale of the slow process is $N$, the condition $a < 1$ reflects that the fast process evolves much faster than the slow process, which is the condition expected for homogenization to hold.

However, our condition in Theorem~\ref{main} is the more stringent condition $a < 1/2$. To see why this condition appears, consider the following semimartingale decomposition of $Q^N$:
\begin{multline*}
	Q^N_v(t) - Q^N_v(0) = N^a \int_0^t \left( \lambda^\infty_v - \pi^{NQ^N(s)}(v) \right) \d s\\
	+ \text{(martingale term)} + N^a \int_0^t \left( \sigma^N_v(s) - \pi^{NQ^N(s)}(v) \right) \d s.
\end{multline*}
The martingale term can be shown to vanish for $a < 1$, but we see that in order for the first term to also vanish we would need to show that the integral on the second line is $o(N^{-a})$: Proposition~\ref{eqpoi} shows that this term is $O(1/N^{1/2} + 1/N^{1-a})$ and so although it is $o(1)$ for $a < 1$, in order to have it $o(N^{-a})$ we need to assume that $a < 1/2$. Whether Theorem~\ref{main} continues to hold for $1/2 < a < 1$ constitutes in our view an interesting open problem, which also testifies to the difficulty of proving fully coupled stochastic averaging principles even in seemingly simple cases.


\color{black}

\subsection{Two other scalings} \label{sub:other-scalings}

Keeping $\varepsilon > 0$ as the distance to the stability region, we now discuss what happens on different space scales than the scale $N = \varepsilon^{-1/a}$ studied so far. To be more precise, we continue to consider arrival rates $\lambda$ given by
\[ \lambda = \lambda^\infty - \varepsilon \gamma \]
with $\tmpsum(\lambda^\infty) = 1$, but now we consider the queue length process on the space scale $N = \varepsilon^{-1/a'}$ with $a' > 0$. Let $N^b$ be the general time scale, and so consider the scaled processes
\[ Q^N(t) = \frac{1}{N} Q(N^b t) \ \text{ and } \ \sigma^N(t) = \sigma(N^b t), \ t \geq 0. \]
Assume for a moment that the stochastic averaging principle and state space collapse continue to hold: thus, determining the asymptotic behavior of $Q^N$ reduces (at least informally) to understanding the asymptotic behavior of $\tmpsum \circ Q^N$ under the homogenized dynamic. With the considered scaling, the homogenized generator is given by
\begin{multline*}
	L^N_\mathrm{h} f(q) = N^b \sum_{v \in V} \lambda^N_v \left( f \left( q + \frac{e^v}{N} \right) - f \left( q \right) \right)\\
	+ N^b \sum_{v \in V} \pi^{Nq}(v) \ind(q_v > 0) \left( f \left( q - \frac{e^v}{N} \right) - f \left( q \right) \right)
\end{multline*}
and so for $q > 0$ we have
\begin{align*}
	L^N_\mathrm{h} \tmpsum(q) & = N^{b-1} \sum_{v \in V} \lambda^N_v - N^{b-1} \sum_{v \in V} \pi^{Nq}(v)\\
	& = N^{b-1} \left( 1 - \tmpsum(\gamma) N^{-a'} \right) - N^{b-1} \left( 1 - \pi^{Nq}(0) \right).
\end{align*}
We have
\[ \pi^{Nq}(0) = \frac{1}{1 + \sum_{v \in V} (N q_v + 1)^a} \approx \frac{1}{N^a \norm{q}a^a} \]
and since the state space collapse assumption entails $\norm{q}a^a = \tmpsum(q)^a/\mu$, we obtain
\[ L^N_\mathrm{h} \tmpsum(q) \approx - N^{b-1-a'} \tmpsum(\gamma) + N^{b-1-a} \mu \tmpsum(q)^{-a}. \]
We see that except when $a = a'$, which is the case studied so far, we cannot have both terms contributing in the limit: one dominates the other. In some sense, the space scale $N = \varepsilon^{-1/a}$ is the only one where we see in the limit at the same time the influence of the idleness induced by the distributed scheduling and the asymptotic drift term arising from the pre-limit processes being near-critical. More precisely, two cases arise:
\begin{description}
	\item[Case $a' < a$:] this is the space scale on which the near-criticality assumption dominates, the idleness induced by the distributed scheduling has no impact. In this case, the right time-scale is $b = 1+a'$ and $Q^N \Rightarrow q$ with $q(t) \in I$ for all $t \geq 0$, and $\tmpsum \circ q$ solution to $\dot x = -\tmpsum(\gamma) \ind(x > 0)$, i.e., $\tmpsum(q(t)) = (\tmpsum(q(0)) - \tmpsum(\gamma) t)_+$;
	\item[Case $a' > a$:] this is the reversed case: one only sees the idleness induced by the distributed scheduling, the limit is the same as the one obtained in Theorem~\ref{main} with $\gamma = 0$. In this case, the right time-scale is $b = 1+a$ and $Q^N \Rightarrow q$ with $q(t) \in I$ for all $t \geq 0$ and $\tmpsum \circ q$ solution to $\dot x = \mu x^{-a}$.
\end{description}
Except for controlling $\tmpsum \circ Q^N$ after (potentially) hitting $0$ in the case $a' < a$, these results can be established by making appropriate changes in the arguments developed above for $a' = a$. Actually, only minor changes are needed along the way. When $a' < a$ and $\tmpsum(\gamma) > 0$, which is the only case where $s \circ q$ hits $0$ in finite time, namely $s(q(0)) / s(\gamma)$, $\tmpsum \circ Q^N$ can be controlled after time $\tmpsum(q(0)) / \tmpsum(\gamma)$ by coupling arguments that will be developed in~\cite{Castiel:+}.

\subsection{Interchange of limits}

Heavy traffic results are often investigated as a means to establish convergence of stationary distributions according to the well-known interchange of limits argument presented schematically in Figure~\ref{fig:interchange}. In our case, $Q^N$ admits a stationary distribution $Q^N(\infty)$ in the subcritical case $\tmpsum(\gamma) > 0$, and in this case we have $Q^N \Rightarrow q$ with $q(t) \in I$ for all $t \geq 0$ and $\tmpsum \circ q$ solution to $\dot x = \mu x^{-a} - \tmpsum(\gamma)$. Although we do not know how to solve this equation explicitly, it is readily seen that, when $\tmpsum(\gamma) > 0$, the solution to this ODE converges to $\beta := (\mu / \tmpsum(\gamma))^{1/a}$ as $t \to \infty$. At first, the interchange of limits of arguments seems therefore to suggest that $Q^N(\infty) \Rightarrow \beta$. The result being deterministic, this would be a rather unusual heavy traffic result. However, we do not know whether this reasoning applies because of the following argument.

\begin{figure}[tbp]
	\centering
		\begin{tikzpicture}[scale=.5]
			\node (1) at (0,0) {$X^N(t)$};
			\node (2) at (5,0) {$X(t)$};
			\node (3) at (0,-5) {$X^N(\infty)$};
			\node (4) at (5,-5) {$X(\infty)$};
			\draw [->] (1) -- (2) node [midway, below] {$N \to \infty$};
			\draw [->] (1) -- (3) node [midway, left] {$t \to \infty$};
			\draw [->] (2) -- (4) node [midway, right] {$t \to \infty$};
			\draw [->, dashed] (3) -- (4) node [midway, above] {?} node [midway, below] {$N \to \infty$};
		\end{tikzpicture}
	\caption{\color{black}Illustration of the interchange of limits argument: if $X^N \Rightarrow_N X$ and $X(t) \Rightarrow_t X(\infty)$, then provided technical assumptions (typically, tightness of $(X^N(\infty))$) we have $X^N(\infty) \Rightarrow_N X(\infty)$.}
	\label{fig:interchange}
\end{figure}
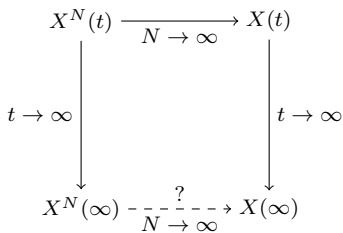

If we start at $Q^N(0)$ with $\tmpsum(Q^N(0)) \to \beta$, then $Q^N \Rightarrow q$ with $q$ a constant function, which suggests to look at $Q$ on a faster time scale than $N^{1+a}$. Indeed, it is thus conceivable that $Q$ scaled differently converges to a non-constant, possibly random, limit. More precisely, the fact that $Q^N \Rightarrow q$ with $q$ a constant function opens the possibility that $\widetilde Q^N \Rightarrow X$ with $X$ a diffusion process, where
\[ \widetilde Q^N(t) = \frac{1}{N} Q(N^b t), \ t \geq 0, \]
for some $b > 1+a$. In this case, the interchange of limits would suggest that $Q^N(\infty) \Rightarrow X(\infty)$, provided $X$ has a stationary distribution.

This argument is plausible because this is typically what happens when considering the near-critical case. For instance, a near-critical $M/M/1$ queue in the fluid regime converges to a constant function, but in the diffusive scale it converges to a positive recurrent Brownian motion. If one were only to consider the fluid limit and naively apply the interchange of limits principle, one would be led to conclude that the stationary distribution converges to a constant, which is not the case. Thus, the asymptotic behavior of $Q^N$ stationary distribution constitutes in our view an intriguing open question.

\subsection{Beyond a complete interference graph}

What makes the case of a complete interference graph tractable is that all queues are of the same order of magnitude and remain away from~$0$ at all times, i.e., all coordinates are scaled by~$N$ and the limiting process $q$ satisfies $\inf_{t \geq 0} q_v(t) > 0$ for every $v \in V$. We believe that the techniques developed in the present paper can be applied beyond the case of the complete interference graph as long as this property holds. For instance, they should be applicable to a square interference graph with four nodes $1,2,3,4$ and edges $(1,2)$, $(2,3)$, $(3,4)$ and $(1,4)$ and equal arrival rates $\lambda_v = 1/4$ at all nodes. Note that in this case, the stability condition is $\max(\lambda_1, \lambda_3) + \max(\lambda_2, \lambda_4) < 1$ so all $\lambda$'s equal to $1/2$ is indeed critical.

However, the fact that all queues remain positive at all times now depends on the underlying interference graph and also on the arrival rates. If we take the above square interference graph with $\lambda_1 = \lambda_2 = \lambda_3 = 1/2$ but $\lambda_4 < 1/2$, then we believe that queue $4$ will remain at $0$. In this case, our techniques can no longer apply, especially the localization arguments that need all queue lengths to be bounded away from $0$. We believe that in such cases, subtle behavior can arise and calls for new ideas.

To give a flavor of the kind of possible new behavior, consider three nodes on a line: the interference graph has three nodes $1,2,3$ and two edges $(1,2)$ and $(2,3)$. In this case, the 'outer' nodes $1$ and $3$ compete against the 'middle' node $2$ to access the channel. The two maximal independent sets are $\{1,3\}$ and $\{2\}$ with respective weight, for the Glauber dynamics,
\[ \pi^q(\{1,3\}) = \frac{(q_1 q_3)^a}{1 + q_1^a + q_2^a + q_3^a + (q_1 q_3)^a} \] 
and
\[ \pi^q(\{2\}) = \frac{q_2^a}{1 + q_1^a + q_2^a + q_3^a + (q_1 q_3)^a}. \]
In the critical and symmetric case $\lambda_1 = \lambda_2 = \lambda_3 = 1/2$, we must have $\pi^q(\{1,3\}) = \pi^q(\{2\}) = 1/2$ in order for service to match arrivals, which imposes $q_1 q_3 = q_2$. This relation imposes a constraint on the product $q_1 q_3$ but not on the individual queues $q_1$ and $q_3$. In particular, if $q_2$ is of the order of $N$, then $q_1$ and $q_3$ will be much smaller, say $\sqrt N$ each. Different queues may thus live on different space scales, which suggests the necessity for a multiscale analysis.

\color{black}


\begin{thebibliography}{CBvW16}

\bibitem[AC19]{Atar19:1}
R.~Atar and A.~Cohen.
\newblock Serve the shortest queue and Walsh brownian motion.
\newblock {\em Ann. Appl. Probab.}, 29(1):613--651, 2019.

\bibitem[Ald82]{ald}
D.~J. Aldous.
\newblock Some inequalities for reversible {M}arkov chains.
\newblock {\em J. London Math. Soc. (2)}, 25(3):564--576, 1982.

\bibitem[BBvL11]{Bouman11:0}
N.~Bouman, S.~Borst, and J.~van Leeuwaarden.
\newblock Achievable delay performance in {CSMA} networks.
\newblock In {\em Proc. 49th Annual Allerton Conference}, pages 384--391,
  September 2011.

\bibitem[BW14]{Boon14:0}
M.~A.~A. Boon and E.~M.~M. Winands.
\newblock Heavy traffic analysis of {$k$}-limited polling systems.
\newblock {\em Probab. Engrg. Inform. Sci.}, 28(4):451--471, 2014.

\color{black}\bibitem[Cas]{Castiel:+}
E.~Castiel.
\newblock Fluid limits for queue-based CSMA algorithms on the complete graph.
\newblock In preparation.
\color{black}

\bibitem[CBvW16]{Cecchi16:0}
F.~Cecchi, S.C. Borst, J.S.H. {van Leeuwaarden}, and P.A. Whiting.
\newblock Mean-field limits for large-scale random-access networks.
\newblock {\em arXiv}, 11 2016.

\bibitem[CPR95]{Coffman95:0}
E.~G. Coffman, Jr., A.~A. Puhalskii, and M.~I. Reiman.
\newblock Polling systems with zero switchover times: a heavy traffic averaging
  principle.
\newblock {\em Ann. Appl. Probab.}, 5(3):681--719, 1995.

\bibitem[CPR98]{Coffman98:0}
E.~G. Coffman, Jr., A.~A. Puhalskii, and M.~I. Reiman.
\newblock Polling systems in heavy traffic: a {B}essel process limit.
\newblock {\em Math. Oper. Res.}, 23(2):257--304, 1998.

\bibitem[Dob68]{Dobrushin68:0}
R.~L.~Dobrushin.
\newblock The problem of uniqueness of a gibbsian random field and the
problem of phase transitions.
\newblock {\em Functional Anal. Appl.}, 2:302--312, 1968.

\bibitem[DBBV15]{Dorsman15:0}
Jan-Pieter~L. Dorsman, Sem~C. Borst, Onno~J. Boxma, and Maria Vlasiou.
\newblock Markovian polling systems with an application to wireless
  random-access networks.
\newblock {\em Perform. Eval.}, 85--86:33--51, 2015.

\bibitem[DSC96]{Dia96}
P.~Diaconis and L.~Saloff-Coste.
\newblock Logarithmic {S}obolev inequalities for finite {M}arkov chains.
\newblock {\em Ann. Appl. Probab.}, 6(3):695--750, 1996.

\bibitem[FPR10]{FPR10}
M.~Feuillet, A.\ Proutiere and P.~Robert.
\newblock Random capture algorithms: fluid limits and stability.
\newblock In {\em Proc. Information Theory and Applications Workshop}, 1--4, 2010.

\bibitem[FR14]{Feu12}
M.~Feuillet and P.~Robert.
\newblock A scaling analysis of a transient stochastic network.
\newblock {\em Adv. in Appl. Probab.}, 46(2):516--535, 2014.

\bibitem[FW84]{Frei98}
M.~I. Freidlin and A.~D. Wentzell.
\newblock {\em Random perturbations of dynamical systems}, volume 260 of {\em
  Grundlehren der Mathematischen Wissenschaften [Fundamental Principles of
  Mathematical Sciences]}.
\newblock Springer-Verlag, New York, 1984.

\bibitem[GBW14]{Ghaderi2012}
J.~Ghaderi, S.~C. Borst, and P.~A. Whiting.
\newblock Queue-based random-access algorithms: fluid limits and stability
  issues.
\newblock {\em Stoch. Syst.}, 4(1):81--156, 2014.

\bibitem[GS10]{ghad10}
J.~Ghaderi and R.~Srikant.
\newblock On the design of efficient {CSMA} algorithms for wireless networks.
\newblock In {\em proceedings of IEEE Conference on Decision and Control
  (CDC)}, pages 954--959, 2010.

\bibitem[{Har}95]{Harrison95:0}
J.~M. {Harrison}.
\newblock {Balanced fluid models of multiclass queueing networks: a heavy
  traffic conjecture}.
\newblock {\em Institute for Mathematics and Its Applications}, 71:1, 1995.

\color{black}\bibitem[HR81]{Harrison81:1}
J.~M. {Harrison} and M.~Reiman.
\newblock {Reflected {B}rownian motion on an orthant}.
\newblock {\em Ann. Probab.}, 9(2):302--308, 1981.
\color{black}

\color{black}\bibitem[HW96]{Harrison96:0}
J.~M. {Harrison} and R.~Williams.
\newblock {A multiclass closed queueing network with unconventional heavy traffic behavior}.
\newblock {\em Ann. Appl. Probab.}, 6(1):1--47, 1996.
\color{black}

\bibitem[HK94]{Hunt94}
P.~J. Hunt and T.~G. Kurtz.
\newblock Large loss networks.
\newblock {\em Stochastic Process. Appl.}, 53(2):363--378, 1994.

\bibitem[Jen10]{Jennings10:0}
O.~B. Jennings.
\newblock Averaging principles for a diffusion-scaled, heavy traffic polling
  station with {$K$} job classes.
\newblock {\em Math. Oper. Res.}, 35(3):669--703, 2010.

\bibitem[JW08]{Jiang08}
L.~Jiang and J.~Walrand.
\newblock A distributed {CSMA} algorithm for throughput and utility
maximization in wireless networks.
\newblock In: {\em Proc.\ Allerton '08 Conf.}, 2008.


\bibitem[Kur92]{Kurtz92}
T.~G. Kurtz.
\newblock Averaging for martingale problems and stochastic approximation.
\newblock In {\em Applied stochastic analysis ({N}ew {B}runswick, {NJ}, 1991)},
  volume 177 of {\em Lect. Notes Control Inf. Sci.}, pages 186--209. Springer,
  Berlin, 1992.

\color{black}
\bibitem[Kru11]{Kruk11:0}
\L.~Kruk.
\newblock An open queueing network with asymptotically stable fluid model and unconventional heavy traffic behavior.
\newblock {\em Math. Oper. Res.}, 36(3):538--551, 2011.
\color{black}

\bibitem[LN13]{Luc13}
M.~J. Luczak and J.~R. Norris.
\newblock Averaging over fast variables in the fluid limit for {M}arkov chains:
  application to the supermarket model with memory.
\newblock {\em Ann. Appl. Probab.}, 23(3):957--986, 2013.

\bibitem[LP17]{lev}
D.~A. Levin and Y.~Peres.
\newblock {\em Markov chains and mixing times}.
\newblock American Mathematical Society, Providence, RI, 2017.


\bibitem[Puh15]{Puha15:0}
A.~L. Puha.
\newblock Diffusion limits for shortest remaining processing time queues under
  nonstandard spatial scaling.
\newblock {\em Ann. Appl. Probab.}, 25(6):3381--3404, 2015.

\bibitem[PW13]{Perry13:0}
O.~Perry and W.~Whitt.
\newblock A fluid limit for an overloaded {$X$} model via a stochastic
  averaging principle.
\newblock {\em Math. Oper. Res.}, 38(2):294--349, 2013.

\bibitem[RSS09]{Raj09}
S.~Rajagopalan, D.~Shah, and J.~Shin.
\newblock Network adiabatic theorem: An efficient randomized protocol for
  contention resolution.
\newblock In {\em proceedings of SIGMETRICS/Performance}, volume~37, pages
  133--144, 2009.

\bibitem[Rei84]{Reiman84}
M.I.~Reiman.
\newblock Open queueing networks in heavy traffic.
\newblock {\em Math.\ Oper.\ Res.}, 9 (3), pages 441--458, 1984. 

\bibitem[Rei05]{Reiman05}
M.I.~Reiman.
\newblock Some diffusion approximations with state space collapse.
\newblock In: F. Baccelli, G. Fayolle (eds.), {\em Modelling
and Performance Evaluation Methodology}, Lecture Notes in Control
and Information Sciences, Vol.~60, Springer, pages 207--240, 2005.

\bibitem[SBB14]{Sim14}
F.~Simatos, N.~Bouman, and S.~C. Borst.
\newblock Lingering issues in distributed scheduling.
\newblock {\em Queueing Syst.}, 77(2):243--273, 2014.

\bibitem[SC97]{Sal97}
L.~Saloff-Coste.
\newblock Lectures on finite {M}arkov chains.
\newblock In {\em Lectures on probability theory and statistics
  ({S}aint-{F}lour)}, volume 1665 of {\em Lecture Notes in Math.}, pages
  301--413. Springer, Berlin, 1997.

\bibitem[SS12]{shashin}
D.~Shah and J.~Shin.
\newblock Randomized scheduling algorithm for queueing networks.
\newblock {\em Ann. Appl. Probab.}, 22(1):128--171, 2012.

\bibitem[SST11]{SST11}
D.~Shah, J.~Shin, and P.~Tetali.
\newblock Medium access using queues.
\newblock In: {\em Proc.\ FOCS 2011 Conf.}, 2011.

\bibitem[Sto04]{Sto04}
A.~L. Stolyar.
\newblock Maxweight scheduling in a generalized switch: state space collapse
  and workload minimization in heavy traffic.
\newblock {\em Ann. Appl. Probab.}, 14(1):1--53, 2004.

\bibitem[TE90]{Tass90}
L.~Tassiulas and A.~Ephremides.
\newblock Stability properties of constrained queueing systems and scheduling
  policies for maximum throughput in multihop radio networks.
\newblock In {\em Proc. CDC '90}, volume~4, pages 2130--2132, 1990.

\bibitem[vdBS94]{Berg94:0}
J.~van den Berg and J.~E.~Steif.
\newblock Percolation and the hard-core lattice gas model.
\newblock {\em Stochastic Process. Appl}, 49(2):179--19, 1994.

\bibitem[vdM07]{Mei07:0}
R.~D. van~der Mei.
\newblock Towards a unifying theory on branching-type polling systems in heavy
  traffic.
\newblock {\em Queueing Syst.}, 57(1):29--46, 2007.

\bibitem[YYSE12]{Yun12:0}
S.-Y. Yun, Y.~Yi, J.~Shin, and D.~Y. Eun.
\newblock {Optimal CSMA: A survey}.
\newblock In {\em Proc. ICCS '12}, pages 199--204, Nov 2012.

\end{thebibliography}
\end{document}